\documentclass[11pt]{article}
\usepackage{fullpage}
\usepackage{amsthm}
\usepackage{comment}
\usepackage{amsmath}
\usepackage{amssymb}
\usepackage{algorithm}
\usepackage{algorithmic}
\usepackage{bm}
\usepackage{comment}
\usepackage{hyperref}
\usepackage{systeme}
\usepackage{graphicx}
\usepackage{float} 
\usepackage{cleveref}

\usepackage{breakcites}
\usepackage{mathtools}

\usepackage{enumitem}
\newtheorem{theorem}{Theorem}[section] 
\newtheorem{lemma}[theorem]{Lemma}
\newtheorem{proposition}[theorem]{Proposition}
\newtheorem{corollary}[theorem]{Corollary}

\newtheorem{fact}[theorem]{Fact}
\theoremstyle{definition}
\newtheorem{definition}[theorem]{Definition}
\newtheorem{remark}[theorem]{Remark}

\usepackage{thmtools}
\usepackage{thm-restate}

\newcommand{\CML}{\mathrm{CML}}

\newcommand{\FF}{\mathbb{F}}
\newcommand{\PP}{\mathbb{P}}
\newcommand{\kfield}{\tilde{k}}
\newcommand{\Kfield}{\tilde{K}}

\newcommand{\Q}{\mathbb{Q}}

\newcommand{\tV}{\tilde{V}}
\newcommand{\tk}{\tilde{k}}

\newcommand{\tX}{\tilde{X}}

\DeclareMathOperator{\Char}{char}

\newcommand{\V}{\mathbf{V}}

\usepackage{comment} 
\newif\ifai

\usepackage{todonotes} 
\aitrue 

\ifai
  \includecomment{ai}
\else
  \excludecomment{ai} 
\fi

\title{$R$-equivalence on Cubic Surfaces I:\\
Existing Cases with Non-Trivial Universal Equivalence}
\begin{ai}
\author{Dimitri Kanevsky, Julian Salazar, and Matt Harvey}
\end{ai}
\date{}

\begin{document}
\newcommand{\Addresses}{{
  \footnotesize
  \textsc{Google DeepMind, 1600 Amphitheatre Parkway, Mountain View, CA 94043}\par\nopagebreak
  \textit{E-mail:} \texttt{\{dkanevsky,julsal,mattharvey\}@google.com}
  \bigskip
}}
\maketitle
\begin{ai}
\Addresses
\end{ai}

\begin{abstract}
\footnotesize
Let $V$ be a smooth cubic surface over a $p$-adic field $k$ with good reduction. Swinnerton-Dyer (1981) proved that $R$-equivalence is trivial on $V(k)$ except perhaps if $V$ is one of three special types---those whose $R$-equivalence he could not bound by proving the universal (admissible) equivalence is trivial. We consider all surfaces $V$ currently known to have non-trivial universal equivalence.  Beyond being intractable to Swinnerton-Dyer's approach, we observe that if these surfaces also had non-trivial $R$-equivalence, they would contradict Colliot-Thélène and Sansuc's conjecture regarding the $k$-rationality of universal torsors for geometrically rational surfaces.\\
\indent By devising new methods to study $R$-equivalence, we prove that for 2-adic surfaces with all-Eckardt reductions (the third special type, which contains every existing case of non-trivial universal equivalence), $R$-equivalence is trivial or of exponent 2. For the explicit cases, we confirm triviality: the diagonal cubic $X^3+Y^3+Z^3+\zeta_3 T^3=0$ over $\mathbb{Q}_2(\zeta_3)$---answering
a long-standing question of Manin's (\textit{Cubic Forms}, 1972)---and the cubic with universal equivalence of exponent 2 (Kanevsky, 1982).\\
\begin{ai}
\indent This is the first in a series of works derived from a \textit{year} of interactions with generative AI models such as AlphaEvolve and Gemini 3 Deep Think, with the latter proving many of our lemmas. We disclose the timeline and nature of their use towards this paper, and describe our broader AI-assisted research program in a companion report (in preparation).
\end{ai}
\end{abstract}


\section{Introduction}

Let $V(k)$ be the set of geometric $k$-points of a geometrically irreducible and reduced projective variety $V$. \textit{$R$-equivalence} is the relation on $V(k)$ where $P \sim Q$ if they are connected by some  chain of rational curves in $V$, i.e., there exists a finite sequence of $k$-points $P, \dotsc, Q$ where each adjacent pair lies in the image of a $k$-morphism from $\PP^1_k$ to $V$. It is pertinent to Diophantine problems---in other words, to describing the set of points $V(K)$ where $K$ is a number field---as it is the finest equivalence relation such that two points given by the same parametric solution to a system of equations defining $V$ are equivalent \cite{diophantine}. For example, because the Brauer-Manin pairing is constant under $R$-equivalence, this relation has been used to prove the sufficiency of the Brauer-Manin obstruction to the Hasse principle \cite[\S VI]{manin3} and to weak approximation \cite{weak-approx} for certain varieties. However, due to the challenge of determining the existence of chains of rational curves, $R$-equivalence has historically resisted computation.

Despite this difficulty, cubic hypersurfaces $V$ offer a tractable setting for arithmetic geometry due to the additional algebraic structure of their $k$-points. When $\dim V = 1$, one retrieves the rich study of points on elliptic curves and their abelian group structure, which expresses geometrically as the chord-and-tangent process. However, applying this geometric process to $\dim V > 1$ does not give a well-defined binary operation due to e.g., the presence of lines on $V$. Manin \cite{manin1968cubic} proposed to modulo out this difficulty, deriving the study of \textit{admissible point classes} on cubic surfaces and their commutative Moufang loop (CML) structure (\Cref{sec:background}), which can be non-associative \cite{dknonassoc}. Thus for cubic hypersurfaces, proving $R$-equivalence is admissible guarantees it has a CML structure; then, computing the universal admissible equivalence provides an upper bound. This motivation, combined with Manin's geometric formulation of admissibility, led Swinnerton-Dyer \cite{swinnertondyer} to investigate this finer universal equivalence for smooth cubic surfaces $V$ over local fields $k$ with good reduction—i.e., where the surface is defined by a cubic equation with integer coefficients whose reduction modulo $p$ defines a smooth cubic surface—by lifting where possible from their reductions over finite fields. In this way he showed all points in $V(k)$ are universally equivalent and thus $R$-equivalent, except possibly when the characteristic of the residue field $\tilde{k}$ is 2 or 3 and conditions on the special fiber $\tilde{V}$ (i.e., $V$'s reduction mod $\mathfrak{p}$) are exceptional; we restate his Theorems 1 and 2, for finite and local fields respectively, in \Cref{sec:swd-background}.

This first work considers the possibility of non-trivial $R$-equivalence arising from the cases currently known to have non-trivial universal equivalence \cite{dimitrikanevsky}. 
The interest in computing $R$-equivalence for these surfaces stems from their unique position in the arithmetic landscape: because these surfaces possess smooth reduction, one expects their Brauer equivalence---where two points are equivalent if they give the same output under the Brauer-Manin pairing for all elements of the Brauer group $\text{Br}(X)$---to be trivial. More precisely, let $S$ be the Néron-Severi torus of $X$. For any smooth, projective, geometrically rational surface $X$ over a characteristic 0 field $k$ with a $k$-point, the map $A_0(X) \to H^1(k, S)$ is injective \cite[Prop.4]{colliot1983}; furthermore, in the case of smooth reduction over a $p$-adic field, this map is zero \cite[Thm.0.2]{bloch1981chow} \cite[Thm.4]{dalawat2005groupe} and therefore $A_0(X)$ is trivial. The Brauer-Manin pairing respects rational equivalence, so this implies trivial Brauer equivalence.
This creates a compelling opening: if $R$-equivalence were non-trivial in these cases, it would be strictly finer than rational and thus Brauer equivalence. This would give the first example of a smooth, projective, geometrically rational surface $X$ over a field of characteristic zero where the map $X(k)/R \to CH_0(X)$ is not injective. Such a result would be particularly noteworthy as it relates to a long-standing question posed by Colliot-Thélène and Sansuc: are universal torsors over such surfaces $k$-rational as soon as they possess a $k$-point \cite[\S 2.8]{colliot1987descente}? If so, the descent map $X(k)/R \to H^1(k, S)$, which factors through $CH_0(X)$, must be injective---a contradiction.

While we ultimately do not find non-trivial $R$-equivalence in this work—consistent with the conjecture—testing these boundaries remains a primary motivation for the new $R$-equivalence methods developed here. Furthermore, resolving $R$-equivalence for these cases is compelling in its own right, as they represent a gap in our methods to compute $R$-equivalence and exceptions to what would otherwise be a clean statement on cubic surfaces over local fields (\Cref{thm:swd-2}). In particular, Manin \cite[end of \S II.6]{manin3firstedition} asked about the universal and $R$-equivalence of the projective cubic
\begin{equation}
\label{eq:manin}
    V : X^3 + Y^3 + Z^3 + \theta T^3 = 0 \textrm{\quad over } \mathbb{Q}_2(\theta), \textrm{ where } \theta^2 + \theta + 1 = 0,
\end{equation}
i.e., $\theta$ is a primitive third root of unity ($\zeta_3$). While its universal equivalence was established as $\mathbb{Z}/3 \times \mathbb{Z}/3$ early on \cite[1.6]{dimitrikanevsky}, its $R$-equivalence  has remained open \cite[Thm.2.iii]{swinnertondyer} \cite[\S16]{manin3} \cite[\S6]{weak-approx} despite advances in non-constructive determination of $R$-equivalence (\Cref{sec:timeline}).

\subsection{Main Results}

First, we outline currently known situations of smooth cubic surfaces $V$ over $p$-adic field $k$ with good reduction that guarantee non-trivial universal equivalence. Note that these fall under case (iii) of \cite[Thm.~2]{swinnertondyer} (\Cref{thm:swd-2}):
\begin{enumerate}
    \item $\tilde{V}(\tilde{k})$ is line-free and all-Eckardt, with more than one point \cite[Cor.~2.5]{dimitrikanevsky}. \Cref{eq:manin} is the explicit case of this \cite[Ex.~II.6.5]{manin3firstedition} \cite[Ex.~1.6.i]{dimitrikanevsky}.
    \item $V : F_{1,1,1}(T) = 0$ from \Cref{eq:main}. This is a case of $\tilde{V}(\tilde{k})$ line-free and a single (Eckardt) point \cite[\S 3.4]{dimitrikanevsky}, which allows but does not guarantee non-trivial equivalence.
    \begin{align}
\label{eq:main}
\begin{split}
F_{b_0, b_1, b_2}(T) &= T_0^2T_1 + T_0T_1^2 + T_2^3 + T_2^2T_3 + T_3^3 + T_1(T_1^2+T_2T_3+T_2^2+T_3^2)\\
&\quad\quad+ 2T_0(b_0T_2^2+b_1T_2T_3+ b_2T_3^2).
\end{split}
\end{align}
\end{enumerate}
In \Cref{sec:3-torsion-free}, we prove the following general result that covers both cases:
\begin{restatable}{maintheorem}{MainResultOne}
\label{thm:main}
\label{thm:eckardt_conditions}
Let $V$ be a smooth cubic surface over a $2$-adic local field $k$
with good reduction such that points $\tilde{V}(\tilde{k}) \ne \emptyset$ are all Eckardt.
Then $R$-equivalence on $V(k)$ is trivial or of exponent 2.
\end{restatable}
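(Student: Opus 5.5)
The plan is to work with the commutative Moufang loop structure on admissible classes of $V(k)$ (\Cref{sec:background}), under which the universal equivalence quotient $V(k)/U$ is a CML with exponent dividing $3$ (Swinnerton-Dyer, Kanevsky). $R$-equivalence is coarser than, or comparable to, this admissible structure. The first step is to show that $R$-equivalence is itself admissible. Then $V(k)/R$ is a quotient CML of $V(k)/U$, and every element $x$ satisfies $3x=0$ in the loop.

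The heart of the argument is to produce, for every point $P\in V(k)$ and a fixed base point $O$, a rational curve (in practice a conic or a line residual to a conic on a plane section) linking the point obtained as "$2P$" to $O$. This would show that $2x=0$ in $V(k)/R$, i.e., that the class group has exponent dividing $2$. Combined with exponent dividing $3$, that would even force triviality. So to get "trivial or exponent 2" I expect the honest route to be different. We show directly that $R$-equivalence classes are unions of cosets of the subgroup generated by $\{P - \text{(reflection of }P\text{ through Eckardt structure)}\}$, and that the map $P\mapsto$ its image under a natural involution acts trivially up to $R$.

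Concretely, I would proceed as follows. Since every point of $\tilde V(\tilde k)$ is Eckardt, each reduction point $\tilde P$ has three coplanar lines over $\overline{\tilde k}$ through it in the tangent plane. By Hensel lifting, the points of $V(k)$ reducing to $\tilde P$ form a residue disc. I would partition $V(k)$ into these discs and use the fact (Swinnerton-Dyer's lifting arguments, \Cref{thm:swd-2}) that within a disc of a non-Eckardt-obstructed type the points are $R$-equivalent.

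For an Eckardt disc, I would intersect $V$ with planes through a point $P$ that contain the lifted tangent direction. The residual conic meets $V(k)$ in a conic that is $k$-rational as soon as it has a $k$-point. This gives $R$-equivalences $P\sim \sigma(P)$, where $\sigma$ is the third-point map on lines through $P$. In the CML language, such a relation reads $x\sim -x$, i.e., $2x\sim 0$ after translation. The claim of exponent $2$ then follows by checking that the remaining obstruction (Kanevsky's $\mathbb{Z}/3$ factors) is killed, or otherwise that the surviving quotient is $2$-torsion.

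The main obstacle will be the existence of $k$-points on the residual conics in characteristic $2$. There the conic's discriminant and the $2$-adic Hilbert symbol behave badly, and tangent-plane reductions are inseparable. I would attack this by choosing the auxiliary plane's coefficients in $\mathcal{O}_k$ so that the reduced conic is a smooth conic over $\tilde k$ (which always has points over a finite field), and then apply Hensel's lemma. The delicate case is when every admissible choice reduces to a double line, which is forced by the Eckardt condition. There I would pass to a ramified rescaling $T\mapsto \pi T$ and redo the analysis on the blown-up model, possibly only obtaining the weaker relation $2x\sim 0$. That weaker relation is exactly what leaves the "exponent 2" alternative in the statement.
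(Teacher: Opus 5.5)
\textbf{There is a genuine gap: your approach never kills the $3$-primary part of $V(k)/R$.}

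Your starting premise is wrong, and it distorts the rest of the plan. By \Cref{thm:structure}, $\mathcal{M}_U(k)$ is the product of an exponent-$2$ abelian group and an exponent-$3$ CML. Both factors genuinely occur under the theorem's hypotheses. Manin's cubic has $\mathcal{M}_U(k)\cong\mathbb{Z}/3\times\mathbb{Z}/3$. The one-point surface $F_{1,1,1}=0$ has two universal classes with $X_0\circ X_0=X_1$, so it is not of exponent dividing $3$. With the premise corrected, the statement says exactly that the $3$-primary factor of $\mathcal{M}_R(k)$ vanishes, i.e.\ $2x=0$ for every $x$ in your notation. So the relation you treat as a ``weaker'' fallback is the whole theorem. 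The exponent-$2$ alternative has nothing to do with Hensel's lemma misbehaving in characteristic $2$.

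More seriously, your mechanism cannot produce the needed relations. A plane through $P$ cuts $V$ in a plane cubic, and a residual conic appears only if the plane contains a $k$-line of $V$. In the line-free case (e.g.\ Manin's cubic) there is none, since a $k$-line would reduce to a $\tilde{k}$-line on $\tilde{V}$. Even when such conics or singular tangent sections exist, the $R$-relations they give are already forced by admissibility (cf.\ Fact~\ref{item}). So every rational curve you propose, including after a rescaling $T\mapsto\pi T$, only reproduces relations already holding in $\mathcal{M}_U(k)$. For Manin's cubic the $3$-part of $\mathcal{M}_U(k)$ is $\mathbb{Z}/3\times\mathbb{Z}/3$. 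Appealing to \Cref{thm:swd-2} inside residue discs gives nothing either: every disc is Eckardt, and $V$ sits in exception (iii) over $k$.

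The missing idea is to leave $k$: pass to a tower of separable quadratic extensions $K/k$ and use Manin's norm map.
\begin{itemize}
\item Since $N(i(S))=S\circ S$ (\Cref{thm:inverse-map}), the kernel of $V(k)/R\to V(K)/R$ is $2$-torsion, so the $3$-part injects. Because $R$ is admissible over $K$, this gives $\#\mathcal{M}_R(k)[3]\le\#\mathcal{M}_U(K)[3]$ (\Cref{thm:RU}).
\item This step is specific to $R$-equivalence, since the $3$-part of $\mathcal{M}_U(k)$ for Manin's cubic dies over such a tower. It is where rational curves beyond plane sections over $k$ (Manin's trace curves $f\circ\bar f$) enter.
\item It remains to show $\mathcal{M}_U(K)[3]=1$ for a suitable unramified tower. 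One case is that $\tilde{V}$ acquires a $\tilde{K}$-line. That line lifts by \Cref{thm:liftlines} and kills the $3$-part by \Cref{thm:line-implies-3-torsion-free}.
\item The other case is that $\tilde{V}(\tilde{K})$ acquires non-Eckardt points in general position with a reduced Eckardt point $\tilde{E}$ (\Cref{thm:genposall}). By \Cref{thm:class-free} the lifts of $\tilde{E}$ form a period-$2$ loop, so $\tilde{E}$ is $3$-class-free.
\item This propagates along secants to all of $\tilde{V}(\tilde{K})$. Then \Cref{thm:biject} together with \Cref{thm:swd-1} makes $V(K)/U_3$ trivial.
\end{itemize}
\Cref{thm:structure} then yields ``trivial or exponent $2$''. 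The $2$-part survives only because the norm argument cannot see it.
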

Its proof primarily relies on the application of Manin's norm map for quadratic extensions. This map preserves the period-three components within the loop of universal equivalence classes on the surface. From it, we can easily specialize to the broad first case:
\begin{corollary}
\label{thm:manin}
In addition, if $\tilde{V}(\tilde{k})$ is line-free, is comprised of more than one Eckardt point, and $V(k)$ contains an Eckardt point, then $R$-equivalence is trivial. In particular, $V : X^3 + Y^3 + Z^3 + \zeta_3 T^3 = 0$ over $\mathbb{Q}_2(\zeta_3)$ has trivial $R$-equivalence.
\end{corollary}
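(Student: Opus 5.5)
The plan is to combine \Cref{thm:main} with the known structure of universal equivalence in the line-free, all-Eckardt case. By \cite[Cor.~2.5]{dimitrikanevsky} and \cite[1.6]{dimitrikanevsky}, when $\tilde{V}(\tilde{k})$ is line-free and all-Eckardt with more than one point, the universal admissible equivalence classes on $V(k)$ form a commutative Moufang loop of exponent 3. In the explicit case this loop is $\mathbb{Z}/3\times\mathbb{Z}/3$.

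Since $R$-equivalence is admissible, it is coarser than or equal to universal equivalence. So $V(k)/R$ is a quotient of this exponent-3 CML, where the composition law is taken relative to a chosen base point. Here I will use the hypothesis that $V(k)$ contains an Eckardt point $E$ and take $E$ as the neutral element. For the Eckardt point, the chord-tangent composition $x\circ y = E\cdot(xy)$ is well behaved: the tangent plane at $E$ cuts out three concurrent lines, so $E\cdot E = E$ gives a genuine identity. Consequently $V(k)/R$ inherits a CML structure of exponent dividing 3.

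On the other hand, \Cref{thm:main} says $V(k)/R$ is trivial or of exponent 2, meaning every class satisfies $2x = 0$, i.e. $x\circ x = E$. Combined with $3x = 0$, this forces $x = 3x - 2x = 0$, using power-associativity, which holds in any CML. Hence every class is trivial, and $R$-equivalence on $V(k)$ is trivial.

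For the example, I need to check the hypotheses. The surface $X^3+Y^3+Z^3+\zeta_3T^3=0$ has good reduction at 2, and over $\tilde{k}=\mathbb{F}_4$ the reduction $\tilde{V}(\mathbb{F}_4)$ is line-free and consists of more than one Eckardt point \cite[Ex.~1.6.i]{dimitrikanevsky}. It remains to exhibit an Eckardt $k$-point. The point $(1:-1:0:0)$ lies on $V$, and it is an Eckardt point of a diagonal cubic because three lines of the form $X+\omega Y = 0 = Z+\omega' \zeta_3^{1/3} T$ pass through it. I would check that the tangent plane $X+Y=0$ meets $V$ in $Z^3+\zeta_3T^3=0$, which is three concurrent lines through the point.

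The main obstacle I expect is making precise that $V(k)/R$ carries a quotient-CML structure in which exponent 3 and exponent 2 are measured compatibly. Concretely, "exponent 2" in \Cref{thm:main} must refer to the same loop operation, based at an Eckardt point, in which universal equivalence has exponent 3. I would resolve this by fixing the base point $E$ throughout and invoking Manin's result that admissible equivalences yield quotient CMLs, independent of base point up to isomorphism.
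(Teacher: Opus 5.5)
There is a genuine gap in your first paragraph, at the point where the hypothesis that $V(k)$ contains an Eckardt point has to be used. You assert that line-freeness, all-Eckardt and $\#\tilde{V}(\tilde{k})>1$ already force $\mathcal{M}_U(k)$ to have exponent $3$. Nothing available gives that. The paper cites \cite[Cor.~2.5]{dimitrikanevsky} for this situation only to get \emph{non-triviality} of universal equivalence. In residue characteristic $2$, \Cref{thm:class-free} only says that the universal classes among the lifts of each Eckardt point $\tilde{P}$ form a CML of period $2$. So a non-trivial $2$-primary factor in $\mathcal{M}_U(k)$ is not ruled out a priori, and that is exactly the ``exponent $2$'' possibility that \Cref{thm:main} leaves open for $R$-equivalence. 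The paper gets exponent $3$ from \cite[Cor.~1.5]{dimitrikanevsky}, whose hypotheses include the existence of an Eckardt point in $V(k)$; that hypothesis is what removes the $2$-part.

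In your argument the Eckardt point does no work. The CML structure on $V(k)/A$ is independent of the base point up to isomorphism \cite[5.1]{manin3}, and exponent is an isomorphism invariant, so taking $E$ as neutral element changes nothing. For the same reason, the ``compatibility of exponents'' obstacle you anticipate is not a real one. A quick check exposes the problem: as written, your proof would give triviality of $R$-equivalence with no Eckardt hypothesis at all, i.e.\ it would upgrade \Cref{thm:main} for every line-free multi-point all-Eckardt reduction, which the paper does not claim.

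To fix it, replace your first paragraph with an appeal to \cite[Cor.~1.5]{dimitrikanevsky} using the Eckardt point in $V(k)$. After that, the rest of your argument is the paper's proof: $V(k)/R$ is a quotient of an exponent-$3$ CML, and together with \Cref{thm:main} this forces triviality. Your check of the diagonal cubic is fine: the tangent plane $X+Y=0$ at $(1:-1:0:0)$ cuts out $Z^3+\zeta_3T^3=0$. One small correction: the three lines through that point are $X+Y=0=Z-cT$ with $c^3=-\zeta_3$, not $X+\omega Y=0$ with $\omega\neq 1$.
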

\begin{proof}
We meet the conditions of \cite[Cor.~1.5]{dimitrikanevsky}, giving universal equivalence that is of exponent 3. $R$-equivalence is coarser, so it cannot be exponent 2. Therefore it must be trivial. One can verify that Manin's diagonal cubic meets all conditions; see \cite[\S 8]{swinnertondyer} and \cite[Ex.~1.6.i]{dimitrikanevsky}.
\end{proof}

However, Manin's original norm construction \cite[\S 15]{manin3} is not applicable to the period-two components of this loop. In \Cref{sec:1pt-r-equiv}, we extend the method to encompass these period-two components. Specifically, we consider a specialization
of the other published 2-adic case, from \cite[3.4]{dimitrikanevsky}, where universal equivalence is known to be exponent two. Let $V$ be the cubic surface over $\mathbb{Q}_2$ defined by $F_{b_0, b_1, b_2} = 0$ from \Cref{eq:main} where $b_i = 1 \mod 2$ for all $i$ (slightly generalizing the second definitive case). Modulo 2 it gives the same surface with one point over $\mathbb{F}_2$ that was considered in \cite{swinnertondyer}). The work demonstrated an admissible equivalence on $V(\mathbb{Q}_2)$ with a non-trivial period-two component, giving a lower bound on the structure of universal equivalence. Using our extended method, we explicitly compute the universal and $R$-equivalence of this example:

\begin{restatable}{maintheorem}{MainResultTwo}
\label{single}
Let $V$ defined by \Cref{eq:main} be smooth such that $\tilde{V}$ has exactly one point. Then $V(\mathbb{Q}_2)$ constitutes a single class of $R$-equivalence and two classes of universal equivalence.
\end{restatable}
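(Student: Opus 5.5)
The plan has two halves. First we pin down universal equivalence as exactly two classes. Then we show that $R$-equivalence is nevertheless trivial, because the single nontrivial universal class is absorbed by rational curves.

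\emph{Upper bound on universal equivalence.} The reduction $\tilde V(\mathbb{F}_2)$ is a single point, and it is Eckardt. So we are in the setting of \Cref{thm:main}: $V(\mathbb{Q}_2)$ modulo universal (admissible) equivalence is a CML in which every element has exponent dividing $2$ or $3$. Since the residue field is $\mathbb{F}_2$, a single point of the special fiber means every point of $V(\mathbb{Q}_2)$ reduces to the same Eckardt point $\tilde P$. We will work in the formal neighbourhood of $\tilde P$: choose local parameters so that the fibre of $V(\mathbb{Q}_2)$ over $\tilde P$ is parametrized by $(2\mathbb{Z}_2)^2$, and express the chord--tangent composition $x\circ y$ as a power series in these parameters.

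\emph{Killing the $3$-part and bounding the $2$-part.} Following Swinnerton-Dyer's lifting argument, the composition on the first congruence level is close to $x\circ y\approx -x-y$ modulo higher order terms. This should show that the $3$-primary part is trivial: an element of order $3$ would have to survive to the residue level, and there it is forced to be trivial because there is only one residue point. It should also show that the $2$-part has order at most $2$. To get that bound, we would track the filtration $V_n=\{P:P\equiv\tilde P \bmod 2^n\}$ and show that $V_2$ is contained in the identity class. The tool for this is writing each point of $V_2$ as a composition of points from $V_1$, or equivalently showing that the squaring map hits $V_2$.

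\emph{Lower bound on universal equivalence.} The lower bound is essentially \cite[3.4]{dimitrikanevsky}, which exhibits an admissible equivalence with a nontrivial period-two component. Here we only need to check that its proof works for all $b_i\equiv1 \pmod 2$ and not just $b_i=1$. It suffices to check that the invariant used there, presumably a quadratic character of a coordinate expression in $T$ on $V_1/V_2$, still depends only on $b_i \bmod 2$ (or $\bmod 4$). Together with the upper bound, this gives exactly two universal classes.

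\emph{Triviality of $R$-equivalence.} Since $R$-equivalence is coarser than universal equivalence, it suffices to exhibit a single rational curve over $\mathbb{Q}_2$ joining a point in the trivial class to a point in the nontrivial class. We intend to use the extended norm construction. Take a quadratic extension $K/\mathbb{Q}_2$, for instance $\mathbb{Q}_2(\sqrt{-3})$, over which $\tilde V(\tilde K)$ has more points and universal equivalence becomes trivial by \cite[Thm.~2]{swinnertondyer}. Choose $P\in V(K)$, and construct from $P$ and its conjugate $\bar P$ the $\mathbb{Q}_2$-point obtained as the third intersection of the line $P\bar P$. Varying $P$ along a $K$-rational line then gives a $\mathbb{Q}_2$-rational curve of such norm points. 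Alternatively, one can use conics cut by planes through the line $P\bar P$.

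\emph{Main obstacle.} The hard step is to check that this family crosses the period-two invariant, i.e.\ that the $\mathbb{Q}_2$-points on one explicit rational curve realize both values of the quadratic character. Manin's original norm only controls the $3$-part, so this must be verified by computing the invariant along an explicit parametrization. I would first try the tangent-plane section at a suitable point of $V_1$. That section is a cuspidal or nodal cubic, hence rational, and I would evaluate the invariant at two parameter values with different $2$-adic valuations.
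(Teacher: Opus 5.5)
You leave the decisive step, getting from $X_0$ to $X_1$ by $\mathbb{Q}_2$-rational curves, as your ``main obstacle,'' and the explicit route you suggest cannot succeed. By admissibility (Fact~\ref{item}), every $\mathbb{Q}_2$-point of the tangent section $C_P$ other than $P$ lies in the universal class $[P]\circ[P]$. Here that class is $X_1$ for every $P$, because $X_0\circ X_0=X_1\circ X_1=X_1$. Moreover, $P$ is the image of a $\mathbb{Q}_2$-point of $\mathbb{P}^1$ under a map to $C_P$ only if $C_P$ has a rational tangent at $P$, and then admissibility already puts $P$ in that same class. So the invariant is constant on every such parametrization, whatever valuations you test: curves built from chord--tangent data are exactly what universal equivalence already collapses. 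Norms of rational points fail for the same reason, since $N(i(S))=S\circ S$ always lands in $X_1$.

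The missing idea is to take norms of \emph{non-rational} points and argue non-constructively. For $P_0\in X_0$, a $\mathbb{Q}_2$-line through $P_0$ reducing to $T_2=T_3=0$ meets $V$ again in a conjugate pair $H_1,\bar H_1$ lying over $(1:\theta:0:0),(1:\theta^2:0:0)$, so $H_1\circ\bar H_1=P_0$. Universal equivalence on $V(K)$ is trivial, hence so is $R$-equivalence. So a chain of $K$-rational curves $f_i$ joins $H_1$ to a rational point $P_1\in X_1$. The $\mathbb{Q}_2$-curves $g_i=f_i\circ\bar f_i$ then join $P_0$ to points that, by \Cref{thm:tanglimit}, approach the tangent section of $P_1$ and therefore lie in $X_1\circ X_1=X_1$. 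No invariant ever has to be evaluated along a curve.

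Your upper bound on universal equivalence is also only heuristic. The first-order law $x\circ y\approx -x-y$ is wrong on a surface. In tangent-plane coordinates $x\circ y$ lies on the chord through $x$ and $y$, whereas $-x-y$ lies on it only when $x$ and $y$ are proportional. In addition, the argument that an order-$3$ element ``must survive to the residue level, where there is only one point'' would kill the $2$-part just as well, contradicting Kanevsky's lower bound.

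The paper instead rescales by $\Phi_1$ to $V_1$, whose reduction has four smooth $\mathbb{F}_2$-points. It shows these residue points are class-free (\Cref{thm:locus,thm:one-pt-class-free}, modeled on Swinnerton-Dyer's Lemmas 15--17). That bounds the universal classes by the two classes $\tilde X_0,\tilde X_1$ on $\tilde V_1$.
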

\noindent In all, we use new and expanded methods to resolve two long-standing cases of exception (iii) to \cite{swinnertondyer}'s Theorem 2 (reproduced as \Cref{thm:swd-2}).

\begin{ai}
Finally, in \Cref{sec:ai-assist} we disclose the non-trivial role played by generative artificial intelligence (AI) in completing this work. Some distinctive aspects as of March 2026 include the relative notability of the problem (\Cref{sec:timeline}), reliance on literature and methods largely unavailable online, and the unexpected role of AI in \textit{improving} rigor in a domain that tends towards high-level geometric reasoning. We also include a timeline of AI use, occurring over a year across multiple models---longer than other AI-assisted results---glimpsing how AI plays into the messiness of long-term research. Finally, our background is atypical: instead of being full-time mathematicians coordinating with AI researchers, we are all career AI researchers who were full-time mathematicians 9+ years ago (though the first author, a student of Manin's, has continued to publish in the field \cite{dknonassoc, dknonassoc2}).

\begin{remark}
This is only the first publication atop a growing body of theory and results over a year (thus far). We propose and discuss this paradigm of AI-assisted mathematical \textit{theory building} in a forthcoming companion report \cite{aireport}.
\end{remark}

\noindent \textbf{Acknowledgements:} We thank Jean-Louis Colliot-Thélène for his correspondence on this subject. We extend our gratitude to our colleagues at Google DeepMind, particularly Lucas Dixon for his encouragement and assistance in utilizing these systems, Daniel Zheng for his helpful comments that improved the paper's content, Adam Zsolt Wagner for onboarding us to AlphaEvolve, and Martin Wattenberg, Eric Wieser, and many others who provided useful discussions and shared promising mathematical tools and agents we hope to leverage in future work.
\end{ai}

\subsection{Timeline of Prior Work}
\label{sec:timeline}
We give a brief history of $R$-equivalence on $p$-adic cubic hypersurfaces, and the question of exceptional cases, for generalist readers:

\begin{itemize}
  \item \textbf{1968:} The theory of CMLs of admissible equivalence classes on cubic hypersurfaces, generalizing the chord-tangent construction of elliptic curves, is introduced by Manin \cite{manin1968cubic}.
  \item \textbf{1972:} Manin uses this theory to prove $R$-equivalence is finite for smooth cubic hypersurfaces over local fields. The cubic surface of \Cref{thm:manin} is shown to have a non-trivial admissible equivalence over the 2-adics by Manin, who then asks about its $R$-equivalence \cite[\S II.6]{manin3firstedition}.
  \item \textbf{1981:} Swinnerton-Dyer \cite{swinnertondyer} shows $R$-equivalence is trivial for many $p$-adic cubic surfaces with good reduction by bounding them via their trivial universal admissible equivalence. However, as Manin's exceptional surface has a non-trivial admissible equivalence, it is not covered by \cite{swinnertondyer}'s methods, falling into exception (iii) of Theorem 2 (in general, the exceptions occur in $p=2, 3$). \cite[\S 8]{swinnertondyer} references Manin and his surface and resolves $R$-equivalence for the finite-field reduction of the surface, but not the surface itself.
  \item \textbf{1982:} Kanevsky \cite{dimitrikanevsky} proves that Manin's admissible equivalence is universal and shows another exception (iii) to Theorem 2---the one-point reduction case---has a non-trivial admissible equivalence.
  \item \textbf{1984:} Kanevsky's preprint \cite{dkbrauer} claims to show Manin's surface has trivial $R$-equivalence.
  \item \textbf{1986:} \cite{manin3} acknowledges some results from \cite{dkbrauer} but not its proof of $R$-equivalence for his surface, restating the question as open.
  \item \textbf{1987:} In their treatise \cite{colliot1987descente} on the theory of descent on rational varieties, Colliot-Th\'{e}l\`{e}ne and Sansuc note their theory's reliance on ``la premi\`{e}re hypoth\`{e}se sur les torseurs universels,'' for which no smooth, projective, $k$-rational counterexamples exist. Weaker versions of the hypothesis are known to fail, but not for surfaces.
  \item \textbf{1999:} Using deformation theory, Kollar \cite{kollar1999rationally} proved that $R$-equivalence is finite for rationally connected varieties over local fields, generalizing Manin's finiteness result via different means. This result was published in \textit{Annals} (the top journal in mathematics, which per \cite{feng2026aletheia}'s human-AI taxonomy is an indicator of a Level 3 ``Major Advance''), to help readers place the historical significance of this line of work.
  \item \textbf{2001:} Swinnerton-Dyer \cite[\S 6]{weak-approx} relates $R$-equivalence to the long-standing question of weak approximation (i.e., when a cubic surface over the $p$-adics approximates its points over a number field). However, he notes his methods still cannot lift his $R$-equivalence proof for the finite-field reduction to compute $R$-equivalence for Manin's surface.
  \item \textbf{2003, 2008:} New results prove trivial $R$-equivalence on more $p$-adic cubic hypersurfaces. However, these results are for hypersurfaces with large residue fields \cite{kollar2003rational} or in higher dimensions \cite{madore2003equivalence, madore2008equivalence}, due to the limitations of e.g., deformation-theoretic approaches.
\end{itemize}

\section{Background}
\label{sec:background}

In general $V$ will denote a geometrically irreducible and reduced projective cubic hypersurface defined over a field $k$. As many of the statements below are in publications not easily accessible, we summarize the known results needed here.

\subsection{Algebraic Structures on Cubic Hypersurfaces}

Here we follow the approach first introduced in \cite{manin1968cubic} and refined in \cite{manin3}:

\begin{definition}
Three points $P_1, P_2, P_3 \in V(k)$ are \textit{collinear} if they lie on a straight line $L$ (as understood in a projective space containing $V$) that is defined over $k$, and either $L \subset V$ or $P_1+P_2+P_3$ is the intersection cycle $V \cdot L$.
\end{definition}

Hence, for any $P_1,P_2 \in V(k)$ there exists $P_3 \in V(k)$ such that $P_1,P_2,P_3$ are collinear, and if $P_1 \neq P_2$ and there is no straight line on $V$ defined over $k$ through $P_1, P_2$, then $P_3$ is uniquely defined. For such pairs of points we can define $P_1 \circ P_2 := P_3$.

\begin{definition}
An equivalence relation $A$ on $V_r(k)$ (which denotes non-singular points of $V(k)$) is said to be \textit{admissible} if when $P_1, P_2, P_3$ and $P_1', P_2', P_3'$ are collinear triples such that $P_1 \sim_A P_1'$ and $P_2 \sim_A P_2'$, then $P_3 \sim_A P_3'$.
\end{definition}
\noindent
Admissible relations are those which turn collinearity into a binary operation on cubic hypersurfaces. 
If $[P]$ is the equivalence class of $P$ in $V_r(k)/A$, where $A$ is an admissible equivalence relation, then the operation $[P_1] \circ [P_2]$ is well-defined and can be characterized as follows:
\begin{fact}[Collinearity as binary operation]
\label{item}
\mbox{}
\begin{itemize}
    \item If $P_1 \neq P_2$ and there is no line through these points lying on $V$, then $P_1 \circ P_2$ is uniquely determined as the third point of intersection of the line $\langle P_1, P_2 \rangle$ with $V$. Its class is denoted $[P_1 \circ P_2]$.
    \item If $P_1$ and $P_2$ lie on a line $L$ belonging to $V$, then admissibility necessitates that $L \cap V_r(k)$ collapses into a single class; thus, we define $[P_1] \circ [P_2] := [P_1] = [P_2]$.
    \item If $P_1 = P_2$ and there is no line through $P_1$ lying on $V$ over $k$, let $C$ be the tangent section (the intersection of $V$ with the tangent plane at $P_1$). Then all points on $C$ distinct from $P_1$ belong to the same admissible equivalence class. Furthermore, if $C$ possesses a rational tangent line at $P_1$, then all points on $C$ (including $P_1$ itself) belong to the same admissible equivalence class.
\end{itemize}
\end{fact}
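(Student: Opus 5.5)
The statement is \Cref{item}, which has three parts. In each part I would use only the definition of admissibility, applied to suitable collinear triples.

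\emph{First bullet.} If $P_1\neq P_2$ and no $k$-line through them lies on $V$, then the line $L=\langle P_1,P_2\rangle$ is defined over $k$. Restricting the cubic form to $L$ gives a binary cubic with two $k$-rational roots, so the third root $P_3$ is also $k$-rational. Moreover $P_1+P_2+P_3=V\cdot L$, so $P_3$ is the unique point making the triple collinear. Now let $P_1'\sim_A P_1$ and $P_2'\sim_A P_2$, and take any $P_3'$ with $(P_1',P_2',P_3')$ collinear; such a point always exists. Admissibility gives $P_3'\sim_A P_3$. Hence $[P_1]\circ[P_2]:=[P_3]$ depends only on the classes, and it is the class $[P_1\circ P_2]$.

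\emph{Second bullet.} Suppose $L\subset V$ is defined over $k$. By the definition of collinearity, any three $k$-points of $L$ form a collinear triple, with no multiplicity condition. Take $Q,R\in L\cap V_r(k)$ and compare the triples $(Q,Q,Q)$ and $(Q,Q,R)$. Their first two entries agree, so admissibility forces $Q\sim_A R$. Thus $L\cap V_r(k)$ lies in one class, and the convention $[P_1]\circ[P_2]=[P_1]=[P_2]$ is consistent.

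\emph{Third bullet.} This is the main step. Let $C=V\cap T_{P_1}V$. For $Q\in C(k)$ with $Q\neq P_1$, the line $\langle P_1,Q\rangle$ lies in the tangent plane, so it meets $V$ at $P_1$ with multiplicity at least $2$. Its intersection cycle is therefore $2P_1+Q$, and the triple $(P_1,P_1,Q)$ is collinear. If some such line lay on $V$, it would be a $k$-line through $P_1$ on $V$, which the hypothesis excludes; this is the one subtlety to check. So for two points $Q,Q'\in C(k)\setminus\{P_1\}$, the triples $(P_1,P_1,Q)$ and $(P_1,P_1,Q')$ share their first two entries, and admissibility gives $Q\sim_A Q'$.

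For the last clause, suppose $C$ has a $k$-rational tangent line $\ell$ at $P_1$. Then $\ell$ lies in $T_{P_1}V$ and is tangent to $C$ at $P_1$, so its intersection multiplicity with $V$ at $P_1$ is at least $3$. I expect the hard part to be this multiplicity count. When $P_1$ is a singular point of $C$ (for instance a node with rational tangents), I would compute it in affine coordinates in the plane, where $C$ is a plane cubic. Granting it, $V\cdot\ell=3P_1$, so $(P_1,P_1,P_1)$ is collinear. Comparing this with $(P_1,P_1,Q)$ gives $P_1\sim_A Q$, so all of $C(k)$ forms a single class.
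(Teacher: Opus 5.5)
Your proposal is correct and matches what the paper takes for granted: the Fact is stated there without proof, as a direct consequence of Manin's definitions of collinearity and admissibility, and each bullet comes down to comparing two collinear triples with the same first two entries, exactly as you do. The multiplicity count you deferred is immediate once you note that $C$ is always singular at $P_1$, because the differential of $F$ at $P_1$ vanishes on $T_{P_1}V$. So in affine coordinates on $T_{P_1}V$ centred at $P_1$ the curve $C$ is $f_2+f_3=0$, a tangent line $\ell$ at $P_1$ has direction $v$ with $f_2(v)=0$, and $F|_\ell=t^3f_3(v)$ with $f_3(v)\neq 0$ since $\ell\not\subset V$, giving $V\cdot\ell=3P_1$ with no case analysis.
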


\begin{proposition}
For admissible $A$ and any $S_1, S_2 \in V_r(k)/A$, there is a uniquely determined class $S_3 \in V_r(k)/A$ such that there exist collinear points $P_1, P_2, P_3$ with $P_i \in S_i$. By defining a new binary operation $S_1 S_2 := [O] \circ (S_1 \circ S_2)$ where $O$ is a fixed point, one converts $V_r(k)/A$ into a \textit{commutative Moufang loop (CML)} with $[O]$ as the identity.
\end{proposition}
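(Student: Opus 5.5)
The plan is to establish the statement in two parts: first that $S_3$ is well defined, then that the operation $S_1S_2 := [O]\circ(S_1\circ S_2)$ satisfies the commutative Moufang loop axioms.

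\emph{Well-definedness of $S_3$.} Existence is immediate. Pick $P_1\in S_1$ and $P_2\in S_2$. By the remark after the definition of collinearity, there is some $P_3\in V(k)$ with $P_1,P_2,P_3$ collinear. It remains to see that $P_3$ can be taken in $V_r(k)$. If $L\subset V$, take $P_3=P_1$. Otherwise $P_3$ is the residual intersection point, and if it were singular the intersection multiplicity would force the line into the tangent cone; that degenerate case is handled exactly as in Fact~\ref{item}. Uniqueness of the class $S_3$ is precisely the definition of admissibility. Given two collinear triples $P_1,P_2,P_3$ and $P_1',P_2',P_3'$ with $P_i\sim_A P_i'$ for $i=1,2$, admissibility yields $P_3\sim_A P_3'$. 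This gives a well-defined symmetric binary operation $S_1\circ S_2=S_3$ on $V_r(k)/A$. Collinearity is symmetric in all three points, so we also get the ``totally symmetric quasigroup'' identities
\[
S_1\circ S_2=S_2\circ S_1, \qquad S_1\circ(S_1\circ S_2)=S_2 .
\]

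\emph{Loop axioms.} Define $S_1S_2=[O]\circ(S_1\circ S_2)$. Commutativity follows from the symmetry of $\circ$. For the identity we compute
\[
[O]S=[O]\circ([O]\circ S)=S .
\]
Unique solvability of $AX=B$ comes from the involutive identity. Since $X\mapsto [O]\circ(A\circ X)$ is a composition of two involutions, it is a bijection, with solution $X=A\circ([O]\circ B)$.

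\emph{Moufang identity.} The remaining and main step is a Moufang identity. In the commutative setting it suffices to prove
\[
(XX)(YZ)=(XY)(XZ).
\]
The standard route (Manin's) reduces this to a single geometric identity for symmetric quasigroups coming from cubics:
\[
(X\circ Y)\circ(Z\circ U)\text{-type relations, e.g. } (X\circ Y)\circ(X\circ Z)=(X\circ X)\circ(Y\circ Z)\ \text{up to the fixed } [O].
\]
The plan is to prove this identity geometrically, configuration by configuration. Choose representatives in general position: by admissibility we may replace points within classes, using density and the fact that $V_r(k)$ is Zariski dense when nonempty. Then the identity follows from the cubic analogue of the nine-point (Cayley--Bacharach) theorem. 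Consider the plane through $X$ spanned by the relevant lines. Its section of $V$ is a plane cubic curve, and any two plane cubics through eight common points pass through a ninth. This forces the two constructed third points to coincide, exactly as in the proof of associativity for elliptic curves. Here, however, the points need not be coplanar, which is why only the weaker Moufang law survives.

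\emph{Main obstacle.} I expect the main difficulty to be the reduction to general position. Degenerate configurations must be handled carefully: points lying on lines contained in $V$, tangent sections, and coincidences. For these I would invoke Fact~\ref{item} to show the identity persists, since whenever a line of $V$ intervenes, the classes involved collapse. Once this is done, the identity for generic representatives, checked in a common plane section via Cayley--Bacharach, transfers to all classes by admissibility, completing the proof that $V_r(k)/A$ is a CML with identity $[O]$.
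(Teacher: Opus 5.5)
The paper gives no proof of this proposition (it is background quoted from Manin), so I judge your argument on its merits. The algebraic half is sound: uniqueness of $S_3$ is exactly admissibility, $\circ$ is a totally symmetric quasigroup, $[O]$ is an identity, and $X\mapsto[O]\circ(A\circ X)$ is a composition of two involutions. The three-variable identity you name is the right target, and it gives the Moufang law exactly, not ``up to $[O]$''. You should write the reduction out. Put $u=[O]$ and apply $(a\circ b)\circ(a\circ d)=(a\circ a)\circ(b\circ d)$ with $a=u$. This rewrites $(u\circ(X\circ X))\circ(u\circ(Y\circ Z))$ as $(u\circ u)\circ\bigl((X\circ X)\circ(Y\circ Z)\bigr)$, and $(u\circ(X\circ Y))\circ(u\circ(X\circ Z))$ as $(u\circ u)\circ\bigl((X\circ Y)\circ(X\circ Z)\bigr)$. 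These agree by the identity with $a=X$, and applying $u\circ(\cdot)$ gives $(XX)(YZ)=(XY)(XZ)$. Your coplanarity remark is backwards: the identity involves only $x,y,z$, which always lie in a $k$-plane. Associativity fails because it also involves $O$.

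\textbf{The gap: reduction to general position.} You cannot move $x,y,z$ inside their classes ``by admissibility and density''. Nothing makes an admissible class Zariski dense, and density of $V_r(k)$ says nothing about a single class. The proposition is stated for an arbitrary field, including the finite residue fields used throughout the paper. There $V_r(k)$ is finite, and by Theorem~\ref{thm:swd-1} a universal class can be a single point. You would also need the derived points ($x\circ y$, $x\circ z$, $y\circ z$, the tangential point of $x$) to be non-degenerate simultaneously.

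So the identity must be proved for the given representatives, and then the degenerate configurations are the whole content. The plane $\Pi\supset\{x,y,z\}$ may satisfy $\Pi\subseteq T_qV$ for some $q\in\Pi\cap V_r(k)$, making $q$ a singular point of $C=\Pi\cap V$ where there is no chord--tangent group law. $C$ may be nodal, cuspidal, reducible or non-reduced, and $x,y,z$ may be collinear. The nine-point argument needs distinct points and cubics meeting properly, which is exactly what fails here. Fact~\ref{item} only tells you which classes collapse, not that the identity holds.

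\textbf{What is missing} is a lemma valid for every $k$-plane $\Pi$ (if $\Pi\subset V$, everything is one class).
\begin{itemize}
\item If some $q\in\Pi\cap V_r(k)$ is singular on $C$, then $\Pi\subseteq T_qV$, so every line of $\Pi$ through $q$ is tangent at $q$. A degree count then puts every point of $\Pi\cap V_r(k)$ in $[q]$ or in $[q]\circ[q]$. Hence the image of $\Pi\cap V_r(k)$, which is closed under $\circ$ and contains the subquasigroup generated by $[x],[y],[z]$, has at most two elements, and the identity can be checked by hand.
\item Otherwise $\Pi\cap V_r(k)=C_{\mathrm{sm}}(k)$. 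The chord--tangent law on this smooth locus comes from the group law of the generalized Jacobian, so it is medial. Admissibility, with lines of $V$ in $\Pi$ collapsing to single classes, makes $C_{\mathrm{sm}}(k)\to V_r(k)/A$ compatible with $\circ$, so the image satisfies the identity.
\end{itemize}
Your proposal supplies neither case.
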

\noindent CMLs are a non-associative generalization of abelian groups. For brevity, we write
\[
\mathcal{M}_A(k) := \CML(V_r(k)/A, [O])
\]
for admissible $A$, as $V$ is implicit and the CML structure is independent of the base point $[O]$ \cite[5.1]{manin3}, generalizing the result for the geometric group law on elliptic curves.

\begin{remark}
We write ``$A$-equivalence'' to refer to either the equivalence relation $A$, or the (isomorphism class of) CMLs derived from admissible $A$, based on context.
\end{remark}

Finally, we can take the initial object of all such admissible quotients:

\begin{definition}
\textit{Universal equivalence} is the finest admissible equivalence relation on $V_r(k)$.
\end{definition}
\begin{proposition}
The universal equivalence exists and is unique, with all admissible relations being quotients of it. When $[O]$ is fixed, there is a natural surjection of CMLs ($\mathcal{M}_U(k) \to \mathcal{M}_A(k)$).
\end{proposition}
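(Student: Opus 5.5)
The statement has two parts: existence and uniqueness of a finest admissible relation, and the surjection $\mathcal{M}_U(k) \to \mathcal{M}_A(k)$ for every admissible $A$. My plan is to take $U$ to be the intersection of all admissible equivalence relations on $V_r(k)$, viewed as subsets of $V_r(k)\times V_r(k)$.

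\textbf{Step 1: the intersection is an admissible equivalence relation.} The family is nonempty, since the trivial relation (everything equivalent) is admissible. An arbitrary intersection of equivalence relations is again an equivalence relation. For admissibility, let $P_1,P_2,P_3$ and $P_1',P_2',P_3'$ be collinear triples with $P_1\sim_U P_1'$ and $P_2\sim_U P_2'$. Then these relations hold in every admissible $A$. Admissibility of each $A$ gives $P_3\sim_A P_3'$ for all such $A$, hence $P_3\sim_U P_3'$.

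\textbf{Step 2: minimality and uniqueness.} By construction $U\subseteq A$ for every admissible $A$, so $U$ is the finest admissible relation. If $U'$ is another finest one, then $U\subseteq U'$ and $U'\subseteq U$, so $U=U'$. Since $U$ refines $A$, the rule $[P]_U\mapsto[P]_A$ is a well-defined surjective map $\pi\colon V_r(k)/U\to V_r(k)/A$. This is the precise sense in which every admissible relation is a quotient of $U$.

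\textbf{Step 3: $\pi$ is a homomorphism of CMLs.} I would first show that $\pi$ respects the collinearity operation $\circ$. By the preceding proposition, $S_1\circ S_2$ is the unique class $S_3$ containing the third point of some collinear triple $P_1,P_2,P_3$ with $P_i\in S_i$. The same triple witnesses $\pi(S_1)\circ\pi(S_2)=\pi(S_3)$ in $V_r(k)/A$, by uniqueness there. Then
\[
\pi(S_1S_2)=\pi([O]_U)\circ\bigl(\pi(S_1)\circ\pi(S_2)\bigr)=[O]_A\circ\bigl(\pi(S_1)\circ\pi(S_2)\bigr)=\pi(S_1)\,\pi(S_2),
\]
and $\pi$ sends the identity $[O]_U$ to the identity $[O]_A$.

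The main obstacle is only a subtlety in Step 3. Lines contained in $V$ and tangent sections make $P_1\circ P_2$ multivalued at the level of points, so $\pi$ cannot be checked on a point-level formula. It must be checked using the class-level uniqueness supplied by the previous proposition, which holds for any admissible relation and so applies to both $U$ and $A$.
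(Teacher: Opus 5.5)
Your argument is correct and complete. The paper states this proposition as background from Manin's theory and gives no proof of its own. Your route is the standard one behind it: take $U$ to be the intersection of all admissible relations (a nonempty family, since the total relation is admissible), and observe that admissibility is preserved under intersection. Then check that $[P]_U \mapsto [P]_A$ respects $\circ$ through the class-level uniqueness in the preceding proposition.
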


\subsection{Properties of $R$-equivalence for $\dim V \ge 2$}

We use the definition of $R$-equivalence that we gave at the beginning of the introduction section.
One can find some different equivalent definitions of $R$-equivalence in \cite[14.1-2]{manin3}.
$R$-equivalence is defined for any algebraic variety $V$ over a field $k$. However, additional algebraic structure comes under very mild cubic hypersurface conditions on $V$.

\begin{theorem}[{\cite[Thm.~1]{kollar2002unirationality}}]
\label{thm:k-point}
Let $V$ be smooth with $\dim V \ge 2$. Then $V(k) \neq \emptyset$ (i.e., $V$ contains a $k$-point) if and only if $V$ is unirational over $k$, i.e., there exists a dominant rational map $f : \mathbb{P}^n_k \dashrightarrow V$ defined over $k$.
\end{theorem}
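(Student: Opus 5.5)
This is Kollár's theorem, so the plan below is the classical argument in outline. The implication ``unirational $\Rightarrow$ $V(k)\neq\emptyset$'' is elementary when $k$ is infinite. A dominant rational map $f:\mathbb{P}^n_k \dashrightarrow V$ is a morphism on a dense open $U\subset\mathbb{P}^n_k$. Since $k$ is infinite, $U(k)\neq\emptyset$, and $f$ sends any point of $U(k)$ to a $k$-point of $V$. For finite $k$ I would instead cite the Chevalley--Warning theorem: a cubic form in $\ge 4$ variables has a nontrivial zero. So the content lies entirely in the converse.

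For the converse, I would start with the Segre--Manin construction, which uses the cubic structure through the tangent plane. Take $P\in V(k)$ and the tangent section $C_P = V\cap T_PV$, a plane cubic curve singular at $P$. A general $k$-point $Q\in C_P$ other than $P$ has its own tangent section $C_Q$ through $Q$. I would parametrize pairs $(Q,R)$ with $Q\in V(k)$ and $R$ on the tangent line data at $Q$. Then the map $(Q,\lambda)\mapsto$ (third intersection of a line through $Q$) gives a $k$-rational map from a rational variety onto $V$. Concretely, I would take the rational variety $W=\{(Q,\ell): Q\in V,\ \ell\subset T_QV \text{ a line through } Q\}$, restricted over a rational curve of points $Q$. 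On a tangent line $\ell$ at $Q$, the cubic meets $V$ with multiplicity $\ge 2$ at $Q$, so the residual point is $k$-rational. This gives a $k$-map $W\dashrightarrow V$.

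The key steps, in order, are these:
\begin{enumerate}
\item[(i)] Produce a second $k$-point $Q\neq P$ such that $Q$ is not on any line of $V$ and is not an Eckardt-type degenerate point. This is done by chord-and-tangent from $P$, using the plane curve $C_P$ and the fact that its normalization is rational over $k$ when $P$ is a node or cusp with rational tangent data.
\item[(ii)] Show that the family of tangent sections $C_Q$, with $Q$ ranging over a $k$-rational curve $D\subset V$, gives a dominant map $\mathbb{P}^1\times\mathbb{P}^1 \dashrightarrow V$ in the surface case.
\item[(iii)] For $\dim V\ge 3$, reduce to the surface case by slicing with $k$-planes through a good point, or repeat the tangent construction inductively.
\end{enumerate}

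The main obstacle is step (i), especially over small finite fields such as $\mathbb{F}_2,\mathbb{F}_4$ and when all $k$-points are Eckardt. In that situation every tangent section is a union of three concurrent lines and the naive construction degenerates. This is exactly the gap that Kollár closed for arbitrary fields. I would first try to handle it by passing to a finite extension of odd degree (e.g., degree $3$), where non-Eckardt points exist. I would then use a norm/descent argument (Manin's third-point trick over a cubic extension) to return a non-degenerate $k$-point. If that fails, I would simply invoke \cite[Thm.~1]{kollar2002unirationality} for this degenerate finite-field case.
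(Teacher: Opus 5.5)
The paper gives no argument of its own for this statement. It is imported from \cite[Thm.~1]{kollar2002unirationality}, so your closing fallback is exactly what the paper does, and your easy direction (a dense domain of definition over infinite $k$, Chevalley--Warning over finite $k$) is fine. As a self-contained proof, however, your outline has a genuine gap precisely where the theorem has content.

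\textbf{Step (i) is impossible in the hard case.} Suppose $P$ is an Eckardt point whose three lines are not defined over $k$. Those lines form a single Galois orbit, so $C_P(k)=\{P\}$. Every $k$-line in $T_PV$ through $P$ meets $V$ in $3P$, giving $P\circ P=P$. So chord-and-tangent from $P$ produces nothing, over any field. For the line-free all-Eckardt surfaces over $\mathbb{F}_2$ and $\mathbb{F}_4$ (Theorem~\ref{thm:swd-1}, or the one-point reduction of \eqref{eq:main} studied in this paper), every $k$-point is of this kind. So a non-degenerate $k$-point does not exist, and no descent can ``return'' one.

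\textbf{The proposed cubic-extension descent does not exist.} Take $x\in V(K)$ with $[K:k]=3$. Its three conjugates either lie on a $k$-line, which then meets $V$ only in those three points, or they span a $k$-plane whose section is a plane cubic over $k$ with no distinguished $k$-point. Manin's third-point trick is intrinsically quadratic: $x$ and $\bar x$ span a $k$-line whose residual point is $k$-rational. Degree $3$ is precisely the degree in which every cubic acquires points for free, by cutting with a $k$-line.

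The missing idea is to descend a \emph{parametrization} rather than a point.
\begin{enumerate}
\item[(a)] Let $K/k$ be a separable quadratic extension over which $V$ has a point $y$ where your tangent-section construction works. This gives a dominant map $\phi:\mathbb{A}^m_K\dashrightarrow V_K$.
\item[(b)] Weil restriction gives a dominant $k$-map $\mathbb{A}^{2m}_k\cong R_{K/k}\mathbb{A}^m_K\dashrightarrow R_{K/k}V_K$.
\item[(c)] The norm map $R_{K/k}V_K\dashrightarrow V$ sends $x$ to the third point of $V$ on the $k$-line $x\bar x$. It is defined over $k$ and dominant: over $\bar k$ it becomes $(x_1,x_2)\mapsto x_1\circ x_2$ on $V\times V$, and $x_2\mapsto x_1\circ x_2$ is already birational for fixed general $x_1$.
\item[(d)] The composite of (b) and (c) makes $V$ unirational over $k$.
\end{enumerate}
For finite $k$, iterate this down a tower $\mathbb{F}_{q^{2^j}}/\mathbb{F}_q$; good points exist for large $j$ by point counting. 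Uniformly in $k$, take $y$ to be a residual point of the generic line through $P$. This is the generic point of $V$, defined over the separable quadratic extension $k(V)$ of $k(\mathbb{P}^n)$ coming from projection from $P$. Then spread out over the rational base $\mathbb{P}^n$.

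This quadratic descent is, in essence, Koll\'ar's argument. It uses the same quadratic norm map (Proposition~\ref{thm:inverse-map}) on which the rest of the paper relies.
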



\begin{remark}
\label{rem:no-general-type}
The condition ``$k$-point of general type'' was almost exclusively used by Manin to establish $k$-unirationality via \cite[Thm.12.11]{manin3}. Likewise, Kanevsky only uses the existence of a $k$-point of general type to use one of Manin's unirationality results, namely that admissible equivalence classes are open in the $k$-topology for local fields $k$ \cite[2.7]{dimitrikanevsky}. Hence, via \Cref{thm:k-point} we can drop the ``general type'' part moving forward, citing this remark when we have done so.
\end{remark}

\begin{proposition}[{\cite[Thm.~14.3]{manin3}}]
\label{thm:r-equiv-admissible}
If $V$ is unirational and $k$ is an infinite field, then $R$-equivalence on $V_r(k)$ is admissible.
\end{proposition}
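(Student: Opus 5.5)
This is Manin's Theorem 14.3, quoted from \cite{manin3}; the proof below follows the standard route. The plan is to check the admissibility axiom directly. Let $P_1,P_2,P_3$ and $P_1',P_2',P_3'$ be collinear triples in $V_r(k)$ with $P_1 \sim_R P_1'$ and $P_2 \sim_R P_2'$. We want $P_3 \sim_R P_3'$. Since $R$-equivalence is generated by elementary links, it suffices to change one point at a time. So we treat the case $P_2=P_2'$, where $P_1$ and $P_1'$ lie on the image of a single $k$-morphism $f:\PP^1_k\to V$; the general case follows by chaining and by symmetry of the collinearity relation.

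Given such $f$ with $f(t_0)=P_1$ and $f(t_1)=P_1'$, consider the family of lines $\langle f(t),P_2\rangle$ for $t\in\PP^1_k$. Whenever this line is not contained in $V$, its third intersection point $g(t)$ is determined by the chord construction. This gives a rational map $g:\PP^1_k\dashrightarrow V$ defined over $k$, because collinearity is algebraic: restrict the cubic form to the line and divide out the two known roots. Since $\PP^1$ is a smooth curve and $V$ is projective, $g$ extends to a $k$-morphism. It then remains to check that $g(t_0)$ and $g(t_1)$ are $R$-equivalent to the actual third points $P_3$ and $P_3'$.

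The main obstacle is the degenerate situations, where the extended value of $g$ at $t_0$ need not equal the given $P_3$. This happens when the line $\langle P_1,P_2\rangle$ lies in $V$, when $P_1=P_2$ (tangent case), or when $f(t)=P_2$ for some $t$. For these I would perturb using unirationality and infiniteness of $k$.

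1. By \Cref{thm:k-point}-type unirationality, there is a dominant rational map $\PP^n_k\dashrightarrow V$. Since $k$ is infinite, $k$-points in its image are Zariski dense in $V$.
2. Replace $P_2$ by a general point $Q$ in this image. Connect $P_2$ and $Q$ by a rational curve: the image of a $k$-line in $\PP^n$ through preimages of the two points.
3. For general $Q$, the lines $\langle P_1,Q\rangle$ and $\langle P_1',Q\rangle$ are not in $V$ and are not tangent, so the generic argument above applies there.

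In the degenerate cases themselves, all points on a line of $V$ are mutually $R$-equivalent, since the line is itself a rational curve. The tangent section is a singular plane cubic, hence rational, so the tangent case also yields $R$-equivalences. This lets one pass from the general configuration back to the special one.

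Finally, applying the one-point-change step twice, once for $P_1\to P_1'$ and once for $P_2\to P_2'$, and concatenating the resulting chains gives $P_3\sim_R P_3'$. Hence $R$-equivalence is admissible on $V_r(k)$.
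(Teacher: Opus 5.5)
The paper gives no proof of its own here; it only quotes Manin's Theorem~14.3, so your argument has to stand alone. Its core, the chord map $g(t)=f(t)\circ P_2$ extended to a $k$-morphism $\PP^1_k\to V$, is the right idea. The way you dispose of the degenerate configurations, however, has a genuine gap.

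Step~2 of your perturbation joins $P_2$ to a general point $Q$ of the image of a fixed dominant map $\PP^n_k\dashrightarrow V$, using ``preimages of the two points''. An arbitrary $P_2\in V_r(k)$ need not be the image of any $k$-point at which that map is defined. If every $k$-point had such a preimage, any two $k$-points would lie on the image of a $k$-line of $\PP^n$. Then $V(k)$ would be a single $R$-class for every unirational $V$ over an infinite field. That is false in general: non-trivial $R$-equivalence on smooth cubic surfaces with a $k$-point is classical, since the Brauer--Manin pairing is constant on $R$-classes and can separate $k$-points. It would also make the question studied in this paper vacuous.

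Moreover, the perturbation does not remove the degeneracy. Going from $Q$ back to $P_2$ with $P_1$ fixed is again a one-point change, and its endpoint is exactly the degenerate pair $(P_1,P_2)$, so you need an endpoint analysis anyway. What the perturbation really secured is that the chord map is defined at all, i.e.\ that $f(\PP^1)$ is not contained in the union of the lines of $V$ through the fixed point. Your generic step silently excludes that case.

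Both issues can be handled directly, with no perturbation; here is the argument for a smooth cubic surface, the case the paper uses.

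\textbf{Case (a): the chords lie in $V$.} If the generic chord $\langle f(t),P_2\rangle$ lies in $V$, then $f(\PP^1)$ is a $k$-line $\ell\subset V$ through $P_2$, since a point lies on at most three lines of $V$. Then $P_3$ and $P_3'$ are $R$-equivalent to points of $\ell$, either via $k$-lines of $V$ or via the direction map described below.

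\textbf{Case (b): the generic chord is not in $V$.} At $t_0$ the chords converge to $\langle P_1,P_2\rangle$ if $P_1\neq P_2$, and to some $k$-line through $P_2$ inside $T_{P_2}V$ if $P_1=P_2$. By continuity of intersection cycles, $\bar g(t_0)$ is then a legitimate third point on that limit line. So $\bar g(t_0)$ and $P_3$ are third points of $k$-lines of the same type through $P_2$. If $P_1\neq P_2$, they coincide or lie on a common $k$-line of $V$.

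\textbf{The tangent case.} Your justification ``the tangent section is a singular plane cubic, hence rational'' is not correct as stated. The section $V\cap T_{P_2}V$ can be reducible: a line plus a conic, a triangle whose two sides through $P_2$ are conjugate, or three conjugate lines at an Eckardt point. And at a node with conjugate branches, $P_2$ is not the image of a $k$-point of the normalization; this is why Fact~\ref{item} treats $P_1$ separately from the other points of $C$.

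What is true, and suffices, is the following. Send each $k$-direction at $P_2$ in $T_{P_2}V$ to the third intersection point of the corresponding line. This extends to a single $k$-morphism $\PP^1_k\to V$. It passes through every third point of every $k$-tangent line at $P_2$ not contained in $V$. Its value at the direction of a $k$-line $m\subset V$ through $P_2$ lies on $m$. Hence all admissible third points for the pair $(P_2,P_2)$ are $R$-equivalent, and with (a) and (b) your one-point step closes.
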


Recall that admissible equivalences collapse lines in $V$ into classes; hence, so does the universal admissible equivalence. Intuitively, $R$-equivalence collapses rational curves in $V$ into classes, resulting in a coarser admissible relation.


In the case of cubic surfaces and higher-dimensional cubic hypersurfaces, the algebraic structure induced via collinearity as a binary operation is more restricted than in elliptic curves.

\begin{proposition}[{\cite[Cor.~13.3]{manin3}} and \Cref{rem:no-general-type}]
\label{thm:structure}
Let $A$ be admissible on $V_r(k) \ne \emptyset$. If the field $k$ is infinite and $\dim V \ge 2$, then $\mathcal{M}_A(k)$ is the direct product of an abelian group of exponent 2 and a CML of exponent 3.
\end{proposition}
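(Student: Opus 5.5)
The plan is to split the argument into a geometric step and an algebraic step. The geometric step produces a single identity in the symmetric quasigroup $(V_r(k)/A,\circ)$ that holds because $\dim V\ge 2$ and $k$ is infinite. The algebraic step shows that this identity forces every element of $\mathcal{M}_A(k)$ to satisfy $x^6=1$, after which the splitting is formal CML theory.

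\emph{Geometric step.} Fix $P\in V_r(k)$ with no $k$-line of $V$ through it. (If there is such a line, \Cref{item} already collapses the relevant classes, and that case is handled separately by the second bullet there.) Let $C=V\cap T_PV$ be the tangent section. Because $\dim V\ge 2$, $C$ contains a plane cubic curve inside the plane $\Pi\subset T_PV$, and this curve is singular at $P$. Projecting from $P$ then parametrizes $C$ rationally by the lines through $P$ in $\Pi$, so $C(k)$ is infinite since $k$ is infinite. By \Cref{item}, all points of $C(k)\setminus\{P\}$ lie in one class $u$, and $[P]\circ[P]=u$.

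Now choose distinct $Q_1,Q_2\in C(k)\setminus\{P\}$ such that the line $\langle Q_1,Q_2\rangle$ avoids $P$ and is not contained in $V$. This is possible because $C(k)$ is infinite. That line lies in $\Pi$, so its third intersection $Q_3$ with $V$ lies on $C$. By admissibility, $u\circ u=[Q_3]=u$. Hence every square $x\circ x$ is an idempotent of $\circ$:
\[
(x\circ x)\circ(x\circ x)=x\circ x .
\]

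\emph{Algebraic step.} Translate this identity through $xy=[O]\circ(x\circ y)$, using the standard facts that $x\circ y=(xy)^{-1}e$ for a fixed element $e$ and that Moufang loops are diassociative. This should yield $x^6=1$ for all $x$ after possibly rechoosing $O$; the base point is irrelevant by \cite[5.1]{manin3}. This bookkeeping is where I expect the main difficulty, namely getting the exact exponent relation out of the idempotent identity rather than a weaker one. I would first test it on an abelian group model $x\circ y=c-x-y$ and then check that only diassociativity was used.

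\emph{Splitting.} In a CML, every cube $x^3$ lies in the centre. Put $E_2=\{x: x^2=1\}$ and $E_3=\{x:x^3=1\}$. If $x^2=1$, then $x=x^3$ is central, so $E_2$ is a central subloop, hence an abelian group of exponent $2$. $E_3$ is a subloop of exponent $3$, and $E_2\cap E_3=1$. Every $x$ can be written as $x=x^3\cdot x^4$ with $x^3\in E_2$ and $x^4\in E_3$, using $x^6=1$ and power-associativity. Centrality of $E_2$ makes the multiplication map $E_2\times E_3\to\mathcal{M}_A(k)$ a loop isomorphism, which gives the claimed decomposition.
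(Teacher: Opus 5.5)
Your strategy is sound: show that every square $x\circ x$ is $\circ$-idempotent, convert this to $x^6=1$, then split using centrality of cubes. It is also more direct than the paper's route. The paper gives no argument; it cites Manin's Corollary 13.3 and drops Manin's general-type hypothesis via \Cref{rem:no-general-type}, i.e.\ via Kollár's unirationality theorem (\Cref{thm:k-point}, which needs $V$ smooth). Your argument never uses unirationality.

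\textbf{The gap.} The geometric step fails in a case your argument does not treat. You claim that projecting from $P$ parametrizes $C$, so that $C(k)$ is infinite. This is false when $P$ is a triple point of the plane cubic $C\cap\Pi$, i.e.\ when $C\cap\Pi$ is three concurrent lines through $P$, none defined over $k$. Galois then permutes the three lines transitively. A $k$-point $Q\neq P$ on one of them would make that line Galois-stable, so $C(k)=\{P\}$, and neither $u$ nor $Q_1,Q_2$ exists.

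This happens on the paper's own surface. On $X^3+Y^3+Z^3+\theta T^3=0$ over $\mathbb{Q}_2(\theta)$, the tangent section at $P=(1:-1:0:0)$ is $Z^3+\theta T^3=0$ inside the plane $X+Y=0$. Here $\theta$ is not a cube in $\mathbb{Q}_2(\theta)$, since that would need a primitive ninth root of unity.

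Two milder issues. First, $C$ may be a triangle: two conjugate lines through $P$ and a $k$-line $\ell$ not through $P$. Then $C(k)\setminus\{P\}\subset\ell$, so you cannot choose $Q_1,Q_2$ with $\langle Q_1,Q_2\rangle\not\subset V$. Second, the case of a $k$-line through $P$ is announced but not argued.

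\textbf{The repair.} Write $F|_\Pi=F_2+F_3$ in affine coordinates centred at $P$ (the linear part vanishes since $\Pi\subset T_PV$), and split into three cases.
\begin{enumerate}
\item[(a)] If $P$ lies on a $k$-line $L\subset V$, then $P,P,P$ are collinear, so $x\circ x=x$.
\item[(b)] If not, and $F_2\equiv0$, then every $k$-line $L\subset\Pi$ through $P$ has $V\cdot L=3P$, so again $x\circ x=x$.
\item[(c)] If $F_2\not\equiv0$, then for all but finitely many $k$-directions $(a:b)$ the line through $P$ meets $C$ again at parameter $-F_2(a,b)/F_3(a,b)\neq0$. Distinct directions give distinct points, so $C(k)\setminus\{P\}$ is infinite whether or not $C$ is irreducible.
\end{enumerate}
In case (c) you need no side conditions on $Q_1,Q_2$: any two distinct points of $C(k)\setminus\{P\}$ work. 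Either $\langle Q_1,Q_2\rangle\subset V$, and then $u\circ u=[Q_1]=u$. Or its third point lies on $C$ and differs from $P$, because a line through $P$ inside $T_PV$ already meets $V$ with multiplicity at least $2$ at $P$. In all three cases $x\circ x$ is idempotent.

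\textbf{The algebraic step.} This is routine and needs no rechoice of $O$. Put $e=[O]\circ[O]$, so that $x\circ y=(xy)^{-1}e$. By Moufang's theorem, $x$ and $e$ generate an abelian group. Hence $x\circ x=x^{-2}e$ and $(x\circ x)\circ(x\circ x)=x^{4}e^{-1}$. Idempotence gives $x^6=e^2$, and setting $x=[O]$ gives $e^2=1$, so $x^6=1$ for all $x$. Your splitting paragraph is then correct as written.
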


\subsection{Results from \cite{swinnertondyer}}
\label{sec:swd-background}

Let $\tilde k = \mathcal{O}_k / \mathfrak{p}_k$ denote the residue field of $k$ (in general, a tilde denotes reduction mod $\mathfrak{p}_k$). 

\begin{definition}
A (geometric) \textit{Eckardt point defined over $k$} on a cubic surface $V$ is a point $P \in V(k)$ where $V \cdot T_P V$ decomposes into three straight lines through $P$ in the algebraic closure $\bar{k}$. 
\end{definition}

Here, we reproduce the key results and intermediate findings of \cite{swinnertondyer} in our notation. 

\begin{definition}
The \textit{normalized Hessian} $H^*$ of a cubic form $F(X, Y, Z, T) = 0$ is given by
\[
(1.1) \quad H^*(X, Y, Z, T) = \frac{1}{4}
\begin{vmatrix}
F_{XX} & F_{XY} & F_{XZ} & F_{XT} \\
F_{YX} & F_{YY} & F_{YZ} & F_{YT} \\
F_{ZX} & F_{ZY} & F_{ZZ} & F_{ZT} \\
F_{TX} & F_{TY} & F_{TZ} & F_{TT}
\end{vmatrix}
\]
where the subscripts denote derivatives.
\end{definition}

\begin{theorem}[{\cite[Thm.~1]{swinnertondyer}}]
\label{thm:swd-1}
Let $V$ be a nonsingular cubic surface defined over the
finite field of $q$ elements, and suppose that $V$ contains $n$ rational points;
then all these points belong to the same class for universal equivalence
except when $V$ contains no rational line and all its rational points are
Eckardt. In the exceptional case there are n classes each consisting of
one point; and either $q = 2$, $n = 3$ or $q = 4$, $n = 9$.
\end{theorem}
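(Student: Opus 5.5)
The plan is to first prove that, outside the exceptional configuration, any two rational points are universally equivalent, and then to analyse the exceptional configuration directly. Two facts from \Cref{item} drive the first part. A rational line contained in $V$ collapses all of its rational points into one class. For a point $P \in V(k)$ that is not Eckardt, the tangent section $C_P = V \cdot T_P V$ is a plane cubic that is singular at $P$, and all of its rational points other than $P$ lie in one class. I also want $P$ itself in that class. If $C_P$ has a rational branch tangent at $P$, \Cref{item} gives this directly. Otherwise, for any rational $Q \neq P$ on $C_P$, the line $\langle P,Q\rangle$ lies in $T_P V$, so it meets $V$ in $2P + Q$. This gives the relation $[P]\circ[P] = [Q]$, which, combined with the collapse of $C_P \setminus \{P\}$, should still pin down the class of $P$.

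Next I would connect different points. I will use the point count $n = q^2 + tq + 1$ with $|t| \le 2$, obtained from the trace of Frobenius on the Picard lattice, to show that the union of the tangent sections of the non-Eckardt points is large enough. The aim is to show that for any two points $P,Q$ the chord $\langle P,Q\rangle$, or a short chain of chords through a third point, lands inside a common tangent section. A rational line in $V$ is handled in the same way: it merges its own rational points, and every tangent section through one of them meets it. In this step I would split into cases according to whether $V(k)$ contains a rational line, and if not, whether at least one non-Eckardt point exists. In the second case, the class of the tangent section at that point should propagate along chords to cover all of $V(k)$. I expect this to be a counting argument, checking that each point lies on some chord meeting a collapsed tangent section.

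In the exceptional case, every $P$ is Eckardt and no line is rational, so the three lines of $C_P$ are Galois-conjugate. Hence $C_P(k) = \{P\}$ and $P\circ P = P$. Any two distinct points then have a well-defined third point, so the partition of $V(k)$ into singletons is admissible, and universal equivalence therefore has $n$ classes. The rational points form a Steiner triple system with idempotent squaring, which forces $n \equiv 1$ or $3 \pmod 6$.

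The main obstacle is bounding $(q,n)$. My first attempt would be to use the fact that a smooth cubic surface has at most $45$ Eckardt points. Combined with $n \ge q^2 - 2q + 1$, this gives $q \le 8$. I would then eliminate the remaining $q$ with two constraints:
\begin{itemize}
\item the triple-system congruence above;
\item the requirement that Frobenius permute the three lines through each Eckardt point with no fixed line, which forces a cyclic $3$-cycle and hence constrains $q \bmod 3$.
\end{itemize}
I expect this to leave only $q=2, n=3$ and $q=4, n=9$; the latter is realised by the Fermat-type cubic, whose nine points form the affine plane $AG(2,3)$.
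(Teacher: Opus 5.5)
The paper gives no proof of this statement (it is quoted from Swinnerton-Dyer's Theorem~1), so your proposal has to stand on its own. Your handling of the exceptional configuration itself is correct: $C_P(k)=\{P\}$, $P\circ P=P$, and the singleton partition is admissible. The other two steps have genuine gaps.

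\emph{Non-exceptional half.} You claim the relation $[P]\circ[P]=[Q]$ ``should still pin down the class of $P$''. It does not. It only determines the square of $[P]$, and squaring on an admissible quotient is far from injective: on the exponent-$2$ part, $X\circ X$ is the same class for every $X$. The paper's one-point example shows this concretely, with $X_0\circ X_0=X_1\neq X_0$.

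Your rational-line case has the same defect. The argument of \Cref{thm:line-implies-3-torsion-free} only kills $3$-torsion. Suppose the residual conic $c$ in a plane through $L$ meets $L$ in two conjugate points. Then $[A]\circ[A]=[A]\circ[B]=[L]$ for all rational $A,B\in c$. This merges the points of $c$ with one another, but not with $L$.

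So acnodal points, Eckardt points and such conics must be attached to the main class by some other route. One option is to exhibit each as the third point of a chord through two points of a class $C$ already known to satisfy $C\circ C=C$. Your plan supplies no such route. Deferring the propagation to ``a counting argument'' does not fill the gap either. The decisive cases are small fields such as $\FF_2,\FF_3,\FF_4$, where $n$ can be as small as $(q-1)^2$ and no asymptotic count applies, so a case analysis is unavoidable. Also, $t=\mathrm{Tr}(\mathrm{Frob}\mid\mathrm{Pic}\,\bar V)$ ranges over $-2\le t\le 7$, not $|t|\le 2$.

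\emph{Exceptional half.} Your constraints do not bound $(q,n)$. The inequality $(q-1)^2\le n\le 45$ gives $q\le 7$, not $q\le 8$. The Steiner congruence still admits, for example, $(q,n)=(2,7),(2,9),(3,13),(4,13),(5,21),(7,43)$. The ``$q \bmod 3$'' constraint does not exist. A $3$-cycle on the three lines through $P$ just means the binary cubic cutting them out in $T_PV$ is irreducible over $\FF_q$, which can happen for every $q$. In any case the two true answers, $q=2$ and $q=4$, lie in different classes mod $3$.

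The missing idea is to look at plane sections. Let $H$ be a rational plane through three non-collinear rational points. Then $E=H\cap V$ is smooth. An irreducible singular section has a unique, hence rational, singular point $S$, which forces $H=T_SV$; but the only rational point of $T_SV\cap V$ is $S$. A reducible section either contains a rational line or has at most one rational point.

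Every rational $P\in E$ is a flex of $E$, because the tangent to $E$ at $P$ is $H\cap T_PV$ and $C_P(k)=\{P\}$. Hence $E(\FF_q)$ has exponent $3$ and is not cyclic, so $E(\FF_q)=E[3]\cong(\mathbb{Z}/3)^2$. Two consequences follow:
\begin{itemize}
\item Your triple system is a Hall triple system, so $n=3^m$.
\item When $n\ge 4$, Hasse--Weil ($9\le q+1+2\sqrt q$) and the Weil pairing ($\mu_3\subset\FF_q$) force $q\ge 4$ and $q\equiv 1\pmod 3$. This is the genuine mod-$3$ condition.
\end{itemize}
Combined with $n\equiv 1\pmod q$ and $(q-1)^2\le n\le 45$, only $(q,n)\in\{(2,1),(2,3),(4,9)\}$ survive. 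The case $(2,1)$ is the paper's line-free, all-Eckardt one-point surface, and it genuinely occurs. So the final dichotomy in the statement tacitly assumes $n>1$, and your Steiner count lets this case through as well.
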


\begin{theorem}[{\cite[Thm.~2]{swinnertondyer}}]
\label{thm:swd-2}
Let $V$ be a nonsingular cubic surface with equation $$F(X, Y, Z, T) = 0,$$
where the coefficients of $F$ are integers in a $p$-adic field $k$; then $\tilde{V}$ is given by $\tilde{F}=0$
and we assume that this equation defines a nonsingular cubic surface (good reduction). All the rational
points of $V$ belong to the same class for universal equivalence except perhaps in the following three cases:
\begin{enumerate}[label=(\roman*)]
    \item $\Char{\tilde{k}} = 3$ and the surface $\tilde{H}^* = 0$ touches $\tilde{V}$ at every rational point of $\tilde{V}$.
    \item $\Char{\tilde{k}} = 2$ and $\tilde{H}^*$ vanishes.
    \item $\Char{\tilde{k}} = 2$ and every rational point of $\tilde{V}$ is an Eckardt point.
\end{enumerate}
In particular \cite[\S 5]{swinnertondyer} gives an explicit list of representatives for reductions satisfying (iii).
\end{theorem}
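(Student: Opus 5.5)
The approach is to lift from the residue field, splitting the problem in two. First, show that each \emph{residue disc}, meaning the set of points of $V(k)$ with a fixed reduction $\tilde P\in\tilde V(\tilde k)$, lies in a single universal class; this is where cases (i) and (ii) arise. Second, show that the resulting relation on $\tilde V(\tilde k)$ is admissible, so that \Cref{thm:swd-1} applies; this is where case (iii) arises. Throughout, $U$ denotes universal equivalence on $V(k)$. By good reduction and Hensel's lemma, every $\tilde P\in\tilde V(\tilde k)$ lifts, and the reduction map $V(k)\to\tilde V(\tilde k)$ is surjective with disc-shaped fibres.

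\textbf{Step 1 (residue discs).} Fix $P\in V(k)$ and consider the tangent section $C_P=V\cdot T_PV$, a plane cubic singular at $P$. By \Cref{item}, all points of $C_P(k)\setminus\{P\}$ are $U$-equivalent. The same holds for every chord construction $P\circ Q$ with $Q$ near $P$. I would parametrize the disc around $P$ in local coordinates and study the map $Q\mapsto P\circ(P\circ Q)$, together with the tangent sections at nearby points, and show that these operations sweep out the entire residue disc. The expansion of $F$ along lines through $P$ involves the second fundamental form at $P$. The degenerate directions are governed by the normalized Hessian $H^*$: $P$ is parabolic exactly when $H^*(P)=0$. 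When $\tilde H^*\neq0$ and $\Char\tilde k\neq 2,3$, a Newton-polygon argument should show that the relevant maps are surjective onto the disc. In characteristic $3$, the tangency condition in (i) is precisely what obstructs this argument. In characteristic $2$, the problem comes from the factor $\tfrac14$ and the vanishing of $\tilde H^*$, giving (ii). Outside these cases I therefore conclude: if $\tilde P=\tilde Q$ then $P\sim_U Q$.

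\textbf{Step 2 (descending to the reduction).} Define $\tilde P\approx\tilde Q$ if some (equivalently, by Step 1, every) pair of lifts are $U$-equivalent. I need to check that $\approx$ is admissible on $\tilde V(\tilde k)$. There are three sub-cases.
\begin{itemize}
\item For a collinear triple $\tilde P_1,\tilde P_2,\tilde P_3$ on a $\tilde k$-line not contained in $\tilde V$, I lift $\tilde P_1,\tilde P_2$. The line through the lifts meets $V$ in a third point whose reduction is $\tilde P_3$, because intersection multiplicities reduce correctly.
\item For a $\tilde k$-rational line on $\tilde V$, I lift it to a $k$-line on $V$. This uses Hensel's lemma on the Fano scheme of lines, which is étale over the base for a smooth cubic surface, so each of the $27$ lines lifts uniquely.
\item Tangent configurations are handled by the same Hensel argument applied to the tangent plane.
\end{itemize}
Hence $\approx$ is admissible, and it is coarser than the universal equivalence on $\tilde V(\tilde k)$. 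By \Cref{thm:swd-1}, the latter is trivial unless $\tilde V$ has no rational line and all its rational points are Eckardt, which is exception (iii); note that the $q=2,4$ constraint of \Cref{thm:swd-1} forces $\Char\tilde k=2$ here. So outside (iii), all reductions are $\approx$-equivalent. Combined with Step 1, this shows that $V(k)$ is a single $U$-class.

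\textbf{Main obstacle.} I expect the main obstacle to be Step 1. Showing that chord and tangent operations fill an entire residue disc requires careful valuation estimates on the local expansion of $F$. Pinning down exactly when these estimates fail, namely the Hessian conditions in characteristics $2$ and $3$, is delicate. I would first handle non-parabolic reductions using a Newton-polygon argument on the cubic in the line parameter. Only then would I treat points lying on $\tilde H^*=0$ directly, using Eckardt and tangent-line information.
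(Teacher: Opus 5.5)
\textbf{Gap in Step~1.} Step~1 claims that outside (i) and (ii) every residue disc lies in a single universal class, so that (iii) only enters in Step~2 through \Cref{thm:swd-1}. This is false above Eckardt points in characteristic~$2$, and nothing in your plan replaces it.

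A concrete counterexample is the surface of Theorem~\ref{single}.
\begin{itemize}
\item $\tV(\mathbb{F}_2)$ is the single Eckardt point $(1:0:0:0)$, and $\Char\tk=2$.
\item $\tilde H^*\not\equiv 0$: as recalled in the proof of Theorem~\ref{thm:main}, this is the all-Eckardt reduction on which $\tilde H^*$ does not vanish identically. So the surface is in neither (i) nor (ii).
\item Your Step~1 would therefore put all of $V(\mathbb{Q}_2)$, which is one residue disc, into a single class. Step~2 is vacuous for a one-point reduction.
\item Yet $V(\mathbb{Q}_2)$ has two universal classes $X_0,X_1$ (\cite{dimitrikanevsky}; Theorem~\ref{single}).
\end{itemize}
So (iii) also records a genuinely local failure. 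This is consistent with \Cref{thm:class-free}, which only gives a period-$2$ CML on the lifts of a characteristic-$2$ Eckardt point. Even when $\Char\tk\neq 2,3$, Eckardt points are not shown class-free by any computation inside their own disc. Their tangent section is three concurrent lines, and \Cref{thm:class-free} needs a partner point in general position.

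Separately, the proposed engine does nothing. For generic $Q$, the points $P,Q,P\circ Q$ lie on one line, so $P\circ(P\circ Q)=Q$, and the map sweeps out no disc.

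\textbf{Missing idea: one good point plus propagation.} The local argument is needed at a single suitable point only, and class-freeness is then propagated. The paper only quotes this theorem, but Swinnerton-Dyer's structure is visible in \Cref{thm:locus,thm:one-pt-class-free} and in the proof of Theorem~\ref{thm:main}.
\begin{itemize}
\item \emph{Local step.} Take two lifts $P,P'$ of a good $\tilde P$. The line $\Pi(P)\cap\Pi(P')$ reduces into $\Pi(\tilde P)$ and meets $V$ in a rational point lying on both tangent sections. Hence $\Gamma(P)\setminus\{P\}$ and $\Gamma(P')\setminus\{P'\}$ lie in one class $C$.
\item Non-singularity at $\tilde P$ of the polar locus $\tilde S$ then places the third point of the chord $PP'$ in $C$ as well. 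So $[P]\circ[P]=[P]\circ[P']$, and cancellation gives $P\sim P'$.
\item That non-singularity fails essentially when $\tilde P$ is a cusp of $\Gamma(\tilde P)$, i.e.\ a parabolic point. This is where $\tilde H^*$ and the characteristic-$2$ and characteristic-$3$ peculiarities enter.
\item \emph{Propagation.} Use the secant-lifting remark before Lemma~15 of \cite{swinnertondyer}: if $\tilde A$ is class-free and in general position with $\tilde B$, then $\tilde B$ is class-free. Together with a check that $\tV(\tk)$ has enough pairs in general position, this spreads class-freeness to every point, including parabolic and Eckardt points.
\end{itemize}
This is why (i) and (ii) are \emph{global} conditions (``at every rational point'', ``$\tilde H^*$ vanishes''): they describe exactly the absence of a good starting point. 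Once all points are class-free, your Step~2 is fine. It is essentially \Cref{thm:biject}, with \Cref{thm:swd-14} covering the tangent configurations you treat only vaguely, and \Cref{thm:swd-1} finishes.
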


\begin{lemma}[{\cite[Lem.~14]{swinnertondyer}}]
\label{thm:swd-14}
Let $\tilde{P}_1, \tilde{P}_2, \tilde{P}_3 \in \tV$ defined over $\tk$ be collinear; they can be lifted to points $P_1, P_2, P_3 \in V$ defined over $k$ which are collinear in the same sense. 
\end{lemma}

\subsection{Results under field extensions}

We now describe the behaviour of $R$-equivalence classes $V_r(k)/R$ under a quadratic extension $K$ of the base field, where there exists a \textit{norm map} that passes from $K$ to $k$:

\begin{proposition}[{\cite[Prop.~15.1-15.1.1]{manin3}}]
\label{thm:inverse-map}
Let $V$ be unirational and $[K:k]=2$ where $K$ is separable over an infinite field $k$. Let $i: V_r(k)/R \to V_r(K)/R$ be the morphism mapping classes to classes. Then there exists a map of sets $N : V_r(K)/R  \to V_r(k)/R$ such that $N(i(S)) = S \circ S$. Furthermore, viewing $i$ as a CML morphism:
\[
S \in \text{ker}(i) \implies S^2 = [O] \circ (S \circ S) = [O] \circ N(i(S)) = [O] \circ N(i([O])) = [O] \circ ([O] \circ [O]) = [O],
\]
i.e., the kernel of $i$ only contains elements of order 2.
\end{proposition}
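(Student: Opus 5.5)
The plan is to define $N$ through Galois conjugation and the chord construction, following Manin's construction. Let $\sigma$ be the nontrivial automorphism of $K/k$, which exists because $K/k$ is separable quadratic, and for $P\in V_r(K)$ write $\bar P=\sigma(P)$.

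\textbf{Step 1: definition of $N$ on points.} Take $P\in V_r(K)$ with $P\neq\bar P$ and suppose the line $\langle P,\bar P\rangle$ does not lie on $V$. This line is $\sigma$-stable, so it is defined over $k$. The residual intersection $P\circ\bar P$ of this line with $V$ is therefore $\sigma$-fixed, hence a $k$-point. Set $N([P]):=[P\circ\bar P]\in V_r(k)/R$.

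When $P$ is a $k$-point we have $\bar P=P$, and the chord degenerates to the tangent-section construction. By the third item of Fact~\ref{item}, read in the $R$-equivalence quotient (which is admissible over $k$ by Proposition~\ref{thm:r-equiv-admissible}), the resulting class is exactly $[P]\circ[P]$. This gives $N(i(S))=S\circ S$, provided $N$ is well defined.

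\textbf{Step 2: well-definedness.} I would use Weil restriction. Let $P,Q\in V_r(K)$ be joined by a $K$-morphism $f:\PP^1_K\to V$. Consider the $k$-variety $W=R_{K/k}\PP^1_K$, which is $k$-rational since it is birational to $\mathbb{A}^2_k$. On $W$ define the rational map
\[
\Phi: W\dashrightarrow V,\qquad x\mapsto f(x)\circ \sigma(f)(\sigma x).
\]
Over $\bar k$ this is the chord map applied to the pair $(f,\sigma f)$. It is $\sigma$-equivariant, so $\Phi$ is defined over $k$. For a point $t\in\PP^1(K)=W(k)$ in the domain of definition, $\Phi(t)=f(t)\circ\overline{f(t)}$.

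Any two $k$-points of a $k$-rational variety are joined by a chain of $k$-lines. Restricting $\Phi$ to these lines and composing yields $k$-rational curves in $V$. Hence $\Phi(t_1)$ and $\Phi(t_2)$ are $R$-equivalent over $k$ whenever both lie in the domain of definition. Since $k$ is infinite, the lines in such a chain can be chosen generically, so they meet the indeterminacy locus only in isolated points. Because the map is from $\PP^1$, it extends across those points.

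\textbf{Step 3: degenerate cases (the expected main obstacle).} The hard part is controlling the exceptional configurations: $P$ a $k$-point, or $\langle P,\bar P\rangle\subset V$, or the endpoints $P,Q$ lying in the indeterminacy locus of $\Phi$.

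My first approach is to extend $\Phi$ on lines through the endpoints, as in Step 2. This gives a value at $t=0$ which is a limit point of the chord construction, and by admissibility it lies in the class prescribed by Fact~\ref{item}: the tangent-section class when $P=\bar P$, or the class of the line when the chord lies on $V$. The same reasoning shows that the value is independent of which $K$-point of $[P]$ is used.

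If this extension argument fails, the fallback is to use unirationality. Via Theorem~\ref{thm:k-point} every class contains a $k$-dense set of good points, namely points off the finitely many exceptional loci. One then checks that the definition is forced on degenerate points by admissibility applied to nearby good points.

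\textbf{Step 4: the kernel statement.} This is then formal. If $i(S)=i([O])$, then
\[
S\circ S=N(i(S))=N(i([O]))=[O]\circ[O].
\]
Applying $[O]\circ-$ to both sides gives $S^2=[O]$, using the loop structure from the Proposition on admissible relations.
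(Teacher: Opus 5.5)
Your architecture is Manin's, and it is the one the paper relies on. You define $N$ by the conjugate chord $x\mapsto x\circ\bar x$. You prove invariance along a $K$-curve $f$ via the trace curve $s\mapsto f(s)\circ\overline{f(s)}$ over $k$, which is the same $g_i=f_i\circ\bar f_i$ the paper uses in \Cref{sec:1pt-r-equiv}. Your Step 4 is exactly the paper's displayed kernel chain. The Weil restriction is harmless but adds nothing: a $k$-line in $R_{K/k}\mathbb{A}^1_K\cong\mathbb{A}^2_k$ through $t_1,t_2$ is just $\PP^1_k$ reparametrized by $s\mapsto t_1+s(t_2-t_1)$.

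\textbf{The gap is Step 3}, which is where the first assertion of the proposition actually lives. Every $P\in S\subset V_r(k)$ has $P=\bar P$, so the chord is unavailable at $P$. Declaring the value there to be the tangent-section class only relocates the content. The identity $N(i(S))=S\circ S$ says precisely that $[x\circ\bar x]=[P]\circ[P]$ in $V_r(k)/R$ for every good $x\in V_r(K)$ with $x\sim_R P$ over $K$.

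Your justification ``by admissibility'' cannot deliver this:
\begin{itemize}
\item Admissibility over $K$ only gives $x\circ\bar x\sim P\circ P$ in $V_r(K)/R$, which is the wrong field.
\item Admissibility over $k$ only applies once you exhibit $k$-points $P,P,y$ that are collinear, with $y$ the extended value. That is the very thing to be shown.
\end{itemize}
Your fallback does not repair this either. \Cref{thm:k-point} gives unirationality of $V$, not that each $R$-class contains a dense set of good points. Moreover, a class with no good point receives no value from Step 1 at all.

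What is missing is a specialization argument, an algebraic analogue of \Cref{thm:tanglimit}. Take the link $f$ of a chain with $f(0)=P$, and put $g(s)=f(s)\circ\overline{f(s)}$ on $\PP^1_k$.
\begin{itemize}
\item For $s\in\PP^1(k)$ with $f(s)\neq\overline{f(s)}$, the chord $L_s$ is $\sigma$-stable, hence a $k$-line.
\item By properness of the Grassmannian, these chords specialize to a $k$-line $L_0$ through $P$.
\item Since $f(s)$ and $\overline{f(s)}$ both specialize to $P$, either $L_0\not\subset V$ and $L_0\cdot V=2P+g(0)$, or $L_0\subset V$ and $g(0)\in L_0$.
\item In either case $P,P,g(0)$ are collinear. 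So $[g(0)]=[P]\circ[P]$ by the definition of $\circ$ on the admissible quotient $V_r(k)/R$.
\item Meanwhile $g$ itself gives $g(0)\sim_R f(\infty)\circ\overline{f(\infty)}$ over $k$.
\end{itemize}
You must also treat separately the case where $g$ is identically degenerate, for example when $f$ maps into a $k$-line of $V$.

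There are two ways to organize the proof around this. Manin arranges the intermediate points of the chain to be in general position with their conjugates \cite[\S15.1.3]{manin3}, so the limit is only needed at the $k$-rational ends. Alternatively, define $N(S)$ as the class of any $y$ with $x,\bar x,y$ collinear for some $x\in S$. For fixed $x$ these $y$ form one class by admissibility over $k$. Then degenerate points are covered by the definition itself, and the specialization argument gives independence of all choices.
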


\cite[Prop.~5.1]{dkbrauer} implicitly turns \Cref{thm:structure} into a specialized version of the following structure result:

\begin{proposition}
\label{thm:RU}
Let $V$ be unirational with $\dim V \ge 2$ over an infinite field $k$. Let $K$ be a tower of separable quadratic extensions over $k$. Then:
\begin{equation}
\label{eq_RU}
    \#\mathcal{M}_R(k)[3] \leq \#\mathcal{M}_R(K)[3] \leq \#\mathcal{M}_U(K)[3]
\end{equation}
where $U$ is the universal equivalence relation and $[3]$ denotes the 3-torsion component of the CML.
\end{proposition}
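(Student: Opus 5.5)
We need to prove Proposition \ref{thm:RU}: $\#\mathcal{M}_R(k)[3] \le \#\mathcal{M}_R(K)[3] \le \#\mathcal{M}_U(K)[3]$.

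The plan is to prove the two inequalities separately. The left one comes from injectivity of the base-change map on $3$-torsion, iterated up the tower. The right one comes from the fact that $R$-equivalence over $K$ is a quotient of universal equivalence over $K$.

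\textbf{Left inequality.} Write the tower as $k = K_0 \subset K_1 \subset \dots \subset K_n = K$, where each $K_{j+1}/K_j$ is separable quadratic. Each $K_j$ is infinite because it contains the infinite field $k$. Unirationality over $k$ persists over each $K_j$ by base change of the dominant rational map $\mathbb{P}^n_k \dashrightarrow V$. Hence \Cref{thm:r-equiv-admissible} applies at every level, so $\mathcal{M}_R(K_j)$ is a CML once we fix the base point $[O]$, with $O \in V_r(k)$ viewed in $V_r(K_j)$ for all $j$. By \Cref{thm:structure} each $\mathcal{M}_R(K_j)$ is a direct product $E_2 \times C_3$, with $E_2$ an abelian group of exponent $2$ and $C_3$ a CML of exponent $3$; its $3$-torsion component is exactly $C_3$.

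The inclusion $V(K_j) \subset V(K_{j+1})$ sends rational curves to rational curves, so it induces a well-defined map $i_j : \mathcal{M}_R(K_j) \to \mathcal{M}_R(K_{j+1})$. Collinear triples over $K_j$ remain collinear over $K_{j+1}$, so $i_j$ preserves $\circ$. It also fixes $[O]$, so it is a CML morphism. Being a morphism, it carries $3$-torsion into $3$-torsion.

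By \Cref{thm:inverse-map}, $\ker(i_j)$ consists only of elements with $S^2 = [O]$. If $S \in C_3 \cap \ker(i_j)$, then $S^3 = [O]$ and $S^2 = [O]$, so $S = [O]$.

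\textbf{Main obstacle.} The delicate step is turning "trivial kernel" into injectivity, since CMLs are non-associative. For $S, T \in C_3$ with $i_j(S) = i_j(T)$, we have $i_j(S T^{-1}) = [O]$. Moufang loops are diassociative, and in a CML inverses are two-sided, so $S T^{-1}$ lies in $C_3$ (a subloop) and has trivial image. By the previous paragraph $S T^{-1} = [O]$. Diassociativity then gives $S = (S T^{-1}) T = T$. Alternatively one can use the standard fact that the kernel of a loop homomorphism is a normal subloop whose cosets are the fibres.

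Thus $i_j$ restricted to $C_3$ is injective, giving $\#\mathcal{M}_R(K_j)[3] \le \#\mathcal{M}_R(K_{j+1})[3]$. Chaining over $j = 0, \dots, n-1$ yields the left inequality.

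\textbf{Right inequality.} Over $K$, $R$-equivalence is admissible by \Cref{thm:r-equiv-admissible}. Since universal equivalence is the finest admissible relation, there is a surjective CML morphism $\mathcal{M}_U(K) \to \mathcal{M}_R(K)$ fixing $[O]$. By \Cref{thm:structure} applied to both loops, a morphism maps the exponent-$3$ factor into the exponent-$3$ factor and the exponent-$2$ factor into the exponent-$2$ factor. Indeed, an element $x$ with $x^3 = [O]$ maps to an element $y$ with $y^3 = [O]$; writing $y = (a, c)$ in $E_2 \times C_3$ gives $a^3 = a = [O]$, so $y \in C_3$. Similarly, $2$-torsion maps to $2$-torsion. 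Surjectivity of the whole map therefore forces the restriction to $3$-torsion components to be surjective. Hence $\#\mathcal{M}_R(K)[3] \le \#\mathcal{M}_U(K)[3]$.
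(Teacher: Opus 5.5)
Your proof is correct and follows the same route as the paper: the norm map of \Cref{thm:inverse-map} makes the base-change map injective on $3$-torsion, this is iterated up the tower, and admissibility of $R$-equivalence over $K$ then gives the surjection $\mathcal{M}_U(K)\to\mathcal{M}_R(K)$. You are more careful than the paper, which asserts that $i$ is injective outright (true only on the $3$-torsion) and does not explain why the surjection restricts to the $3$-torsion components; your diassociativity argument and your componentwise use of \Cref{thm:structure} fill both points.
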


\begin{proof}
Let $S$ be a class of points on a cubic surface $V(k)$ belonging to the 3-torsion component of the associated CML, i.e., $S \in \mathcal{M}_R(k)[3]$. This is a CML of exponent 3. Let $\ell/k$ be the first separable quadratic extension; by \Cref{thm:inverse-map}, the CML morphism $i: V_r(k)/R \rightarrow V_r(\ell)/R$ is injective. Hence $\#\mathcal{M}_R(k)[3] \le \#\mathcal{M}_R(\ell)[3]$; repeating this reasoning gives $\le \#\mathcal{M}_R(K)[3]$. Then, by \Cref{thm:r-equiv-admissible}, $R$-equivalence is admissible on $V_r(K)$, making it compatible but coarser than universal (admissible) equivalence, and so we have a surjection $V_r(K)/U \to V_r(K)/R$ of CMLs. It follows that $\#\mathcal{M}_U(K)[3] \ge \#\mathcal{M}_R(K)[3]$. \qedsymbol
\end{proof}

\subsection{Lifting Results on $\dim V = 2$}

The primary strategy for equivalence results on local fields has involved describing under which conditions equivalence on finite fields tells us about their local fields.

\begin{definition}
\label{def:general-position}
 We say that $P_1$, $P_2$ are \textit{in general position} when $P_1 \ne P_2$ and the line $P_1 P_2$ is neither tangent to $V$ nor contained in $V$.
\end{definition}

The following facts follow from theorems 11.7, 13.2 and corollaries 6.1.3, 13.3 in \cite{manin3}.\\
The universal equivalence $U$ on $V(k)$ can be split into two admissible equivalences $U_2$ and $U_3$ such that $U = U_2 \cap U_3$, where $|V(k)/U_3| = 3^n$ and $|V(k)/U_2| = 2^m$ for non-negative integers $n, m$. These split equivalences satisfy the following properties:
\begin{itemize}
    \item \textbf{Property 3 ($U_3$):} For any class $X \in V(k)/U_3$, we have $X \circ X = X$.
    \item \textbf{Property 2 ($U_2$):} There is a class $X_0 \in V(k)/U_2$ such that for any class $X \in V(k)/U_2$, we have $X \circ X = X_0$.
\end{itemize}
Because $U$ is the overall finest admissible equivalence, $U_i$ (for $i \in \{2, 3\}$) is strictly the \emph{finest} admissible equivalence on $V(k)$ satisfying the respective Property $i$. We denote the corresponding finest admissible equivalence satisfying Property $i$ on $\tilde{V}(\tilde{k})$ by $\tilde{U}_i$.

\begin{definition}[$i$-Class Free Point]
Let $i \in \{2, 3\}$. A point $\tilde{P} \in \tilde{V}(\tilde{k})$ is called \emph{$i$-class free} if all points on $V(k)$ lifted from $\tilde{P}$ belong to the exact same class $U_i$.
\end{definition}

Then:
\begin{proposition}[{\cite[Prop.~2.4]{dimitrikanevsky}} and \Cref{rem:no-general-type}]
\label{thm:class-free}
Let $V(k) \ne \emptyset$ be a nonsingular cubic surface defined over a $p$-adic local field $k$. Suppose upon reduction to the residue field $\tilde{k}$ that no straight lines defined over $\tilde{k}$ and lying on $\tilde{V}$ pass through its Eckardt points. For an Eckardt point $\tilde{P} \in \tilde V(\tilde k)$ let $E_{\tilde{P}}$ denote equivalence classes of points $P$ reducing to $\tilde{P}$. If an Eckardt point $\tilde{P}$ is in general position together with some point of $\tilde{V}(\tilde{k})$, then $\tilde{P}$ is class-free for $\Char{\tilde{k}} \ne 2,3$,  $\CML(E_{\tilde{P}})$ has period 2 for $\Char{\tilde{k}} = 2$, and $\CML(E_{\tilde{P}})$ has period 3 for $\Char{\tilde{k}} = 3$.
\end{proposition}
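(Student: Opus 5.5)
The plan is to work locally in a $\mathfrak{p}$-adic neighbourhood of the Eckardt point. I would work with the classes $E_{\tilde P}$, i.e. the $U$-classes (equivalently the $U_i$-classes) of points $P \in V(k)$ reducing to a fixed Eckardt point $\tilde P \in \tilde V(\tilde k)$.

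\textbf{Coordinates.} First I change coordinates over $\mathcal{O}_k$ so that $\tilde P = (0:0:0:1)$ and the tangent plane at $\tilde P$ is $X=0$. Because $\tilde P$ is Eckardt, the tangent section $\tilde V \cdot T_{\tilde P}\tilde V$ is three concurrent lines through $\tilde P$ over $\bar{\tilde k}$. By hypothesis none of them is defined over $\tilde k$.

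\textbf{Lifts and tangent sections.} A lift $P$ of $\tilde P$ has tangent plane reducing to $T_{\tilde P}\tilde V$. Its tangent section $C_P$ is a plane cubic over $k$ whose reduction is the three concurrent lines. By Fact~\ref{item}, all points of $C_P(k)$ other than $P$ lie in one class, namely the class $P\circ P$. I will study which lifts $Q$ of $\tilde P$ lie on such sections, and which lie on chords $\langle P_1,P_2\rangle$ with both $P_i$ reducing to $\tilde P$.

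\textbf{Using the general-position point.} Let $\tilde R$ be the point in general position with $\tilde P$, and lift it to $R$ by Lemma~\ref{thm:swd-14}. Then $Q \mapsto Q\circ R$ is a bijection, via a $k$-analytic involution, between lifts of $\tilde P$ and lifts of $\tilde P \circ \tilde R$. It carries classes to classes. This identifies the structure of $E_{\tilde P}$ with a translate inside the CML $\mathcal{M}_U(k)$.

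\textbf{Computing $P \circ P$ in coordinates.} In local coordinates $P = (x,y,z,1)$ with $x,y,z \in \mathfrak{p}$, I would compute $P\circ P$ explicitly to first order. The expected outcome is that $P\circ P$ is congruent to a fixed lift-independent class determined by the Hessian data, and that for $Q$ near $P$ the class of $P\circ Q$ depends only on $Q$ modulo a higher power of $\mathfrak{p}$. Combined with the openness of classes (\cite[2.7]{dimitrikanevsky}, valid by Remark~\ref{rem:no-general-type}), this makes $E_{\tilde P}$ a finite CML generated by these local operations.

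\textbf{Reading off the period.} The decisive identity I aim for is $S\circ S = S_0$ for all $S \in E_{\tilde P}$, or $S\circ S = S$, depending on the characteristic. It should come from the multiplicity-three degeneration of the tangent section. On the reduction, the tangent cubic is $\ell_1\ell_2\ell_3$, and its Taylor expansion at a lift picks up factors of $2$ and $3$ from the binomial coefficients in $F(P+tv)$. If $\Char \tilde k \ne 2,3$, these factors are units. Hensel's lemma then lets me connect any two lifts through tangent sections and chords, so the point is class-free. In characteristic $2$, the quadratic term vanishes modulo $\mathfrak{p}$. Only doubling is then forced to land in one class, which gives Property 2, so $E_{\tilde P}$ has period $2$. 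In characteristic $3$, the cubic term $t^3$ collapses instead, which gives $X \circ X = X$, i.e. period $3$.

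\textbf{Main obstacle.} The hardest step is the characteristic-dependent Hensel lifting. I need to show that the relevant chord or tangent through two nearby lifts meets $V$ in a third $k$-point reducing to $\tilde P$ with controlled valuation. I would first attempt this via Newton polygons of the cubic $F(P+t(Q-P))$, using the non-rationality of the Eckardt lines to exclude degenerate valuations.
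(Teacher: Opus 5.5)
\textbf{There is a genuine gap in your proposal.} The paper does not prove this statement itself; it imports Kanevsky's Prop.~2.4 and uses \Cref{rem:no-general-type} only to drop the general-type hypothesis. Judged on its own, your proposal breaks down at ``Reading off the period'', which is where all the content lies, and the mechanism you propose there is not correct. For a cubic form one has, integrally and in every characteristic,
\[
F(P+tv)=F(P)+t\,\langle \nabla F(P),v\rangle+t^{2}\,\langle P,\nabla F(v)\rangle+t^{3}F(v),
\]
so there are no binomial factors $2$ or $3$ to exploit. Suppose $P$ lifts $\tilde P$ and $v$ reduces to a $\tilde k$-rational direction in $T_{\tilde P}\tilde V$ independent of $\tilde P$. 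Then:
\begin{itemize}
\item $\langle P,\nabla F(v)\rangle\equiv 0 \pmod{\mathfrak p}$ in \emph{every} characteristic; this is exactly the Eckardt condition.
\item $F(v)$ is a unit in every characteristic; otherwise the reduced line would be a $\tilde k$-rational line on $\tilde V$ through $\tilde P$.
\end{itemize}
So ``the quadratic term vanishes modulo $\mathfrak p$ in characteristic $2$'' and ``the $t^3$ term collapses in characteristic $3$'' are false. Nothing you write produces $S\circ S=S_0$ in characteristic $2$, $S\circ S=S$ in characteristic $3$, or the Hensel connection of arbitrary lifts when $\Char\tilde k\neq 2,3$. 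The lift-independence of $[P]\circ[P]$ is announced as an ``expected outcome'', not derived. The general-position point is also never really used: $Q\mapsto Q\circ R$ only transports the question to the lifts of $\tilde P\circ\tilde R$.

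Conversely, the step you single out as the main obstacle is immediate. If $P\neq Q$ both lift $\tilde P$, the reduced chord is a $\tilde k$-rational line through $\tilde P$ inside $T_{\tilde P}\tilde V$. It is therefore not one of the three Eckardt lines, so it meets $\tilde V$ only in $3\tilde P$, and $P\circ Q$ again lifts $\tilde P$. Likewise $C_P(k)$ reduces to $\tilde P$, because the only $\tilde k$-point on three Frobenius-permuted lines is their common point. Hence the lifts are closed under collinearity, and $E_{\tilde P}$ is a sub-CML of $\mathcal M_U(k)$ with identity any lift $O$.

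What is missing is the characteristic-dependent input, and there are two natural routes:
\begin{itemize}
\item Show that $E_{\tilde P}$ is $p$-primary for $p=\Char\tilde k$. Every element of $\mathcal M_U(k)$ has order dividing $6$ by \Cref{thm:structure}, so all three cases of the statement would then follow at once.
\item Prove the identities directly. For $S\circ S=S$, show that the quadratic form $v\mapsto\langle P,\nabla F(v)\rangle$ on $T_PV$, whose zero locus is the tangent cone of $C_P$ at $P$, has a $k$-rational zero; then Fact~\ref{item} gives $[P]\circ[P]=[P]$. For $S\circ S=S_0$, link the classes of $C_P$ and $C_{P'}$ for different lifts $P,P'$.
\end{itemize}
Either route needs a real $\mathfrak p$-adic analysis of this quadratic form as $P$ varies over the lifts. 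That is where $2$ and $3$ can genuinely enter, for example through Euler's identity $\langle v,\nabla F(v)\rangle=3F(v)$. None of this is supplied, so as written the proposal does not establish the statement.
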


\section{Proof of \Cref{thm:main}}
\label{sec:3-torsion-free}
First we prove a few lemmas.
\begin{lemma}
\label{thm:liftlines}
Let $K$ be a local field with ring of integers $\mathcal{O}_K$ and residue field $k$. Let $V \subset \mathbb{P}^3_K$ be a cubic surface defined by a homogeneous polynomial $F$. Assume that $V$ has good reduction, meaning the reduction $\tilde{V}$ is smooth over $k$.

Then, every line $\tilde{L}$ on $\tilde{V}$ lifts uniquely to a line $L$ on $V$.
\end{lemma}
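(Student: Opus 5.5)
The plan is to work on the Fano scheme of lines of the integral model and apply Hensel's lemma there. Normalize $F$ so that its coefficients lie in $\mathcal{O}_K$ and its reduction $\tilde{F}$ defines the smooth surface $\tilde{V}$. The lines in $\mathbb{P}^3$ are parametrized by the Grassmannian $G = G(2,4)$ over $\mathcal{O}_K$, which is smooth and proper of relative dimension $4$. The condition that a line lie on the surface cuts out a closed subscheme $\mathcal{F} \subset G$, the relative Fano scheme of lines of the model $\mathcal{V} : F = 0$. Locally it is given by the vanishing of a section of $\mathrm{Sym}^3 \mathcal{S}^\vee$, a rank-$4$ bundle. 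So $\mathcal{F}$ is locally cut out by $4$ equations in a smooth scheme of relative dimension $4$ over $\mathcal{O}_K$. Its special fiber $\mathcal{F}_k$ is the Fano scheme of $\tilde{V}$, and its generic fiber $\mathcal{F}_K$ is that of $V$.

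The key input is the classical fact that for a smooth cubic surface over any field (in any characteristic, including $2$ and $3$), the Fano scheme of lines is finite and reduced: it is étale of degree $27$ over the field. I will justify reducedness through the standard deformation computation. The Zariski tangent space to the Fano scheme at a line $L$ is $H^0(L, N_{L/\tilde{V}})$. Since $\tilde{V}$ is smooth and $L \cong \mathbb{P}^1$ is a $(-1)$-curve by adjunction ($K_{\tilde{V}} \cdot L = -1$ and $L^2 = -1$), we have $N_{L/\tilde{V}} \cong \mathcal{O}_{\mathbb{P}^1}(-1)$, so $H^0 = 0$. This argument is characteristic-free.

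With this in hand, the fiber $\mathcal{F}_k$ is zero-dimensional with vanishing tangent spaces. For a local complete intersection $\mathcal{F}$ of $4$ equations in a smooth $4$-dimensional relative scheme, this means the Jacobian of the $4$ local equations is invertible at each point $\tilde{L} \in \mathcal{F}(k)$. Hence $\mathcal{F} \to \operatorname{Spec} \mathcal{O}_K$ is étale in a neighborhood of every point of the special fiber. Concretely, I will pick affine coordinates on a chart of $G$ around $\tilde{L}$, for instance lines spanned by the rows of $\begin{pmatrix} 1 & 0 & a & b \\ 0 & 1 & c & d \end{pmatrix}$ after a coordinate change defined over $k$ and lifted to $\mathcal{O}_K$. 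The four equations are the coefficients of $F(s e_1 + t e_2)$ in $s^3, s^2t, st^2, t^3$. Hensel's lemma for a system of $4$ equations in $4$ unknowns with unit Jacobian determinant then gives a unique solution $(a,b,c,d) \in \mathcal{O}_K^4$ reducing to the coordinates of $\tilde{L}$. This solution is a line $L$ defined over $K$, lying on $V$ and reducing to $\tilde{L}$.

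For uniqueness, suppose $L'$ is another line on $V$ reducing to $\tilde{L}$. By properness of $G$, $L'$ extends to an $\mathcal{O}_K$-point of $G$, hence of $\mathcal{F}$ since $\mathcal{F}$ is closed. The reduction of that point is $\tilde{L}$, so its chart coordinates lie in $\mathcal{O}_K$ and reduce to those of $\tilde{L}$. Hensel uniqueness then forces $L' = L$. One also notes that "lies on $V$" is the same as lying in $\mathcal{F}_K$.

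The main obstacle is the reducedness of the special fiber in characteristic $2$, which is the case the paper needs. The normal bundle argument handles it. The only care required is that $N_{L/\tilde{V}}$ be computed scheme-theoretically: the tangent space to the Fano scheme is $\mathrm{Hom}(I_L/I_L^2, \mathcal{O}_L)$ restricted to $\tilde{V}$. This is standard and uses only the smoothness of $\tilde{V}$ along $L$.
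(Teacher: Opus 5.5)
Your proposal is correct and follows the same route as the paper. Both arguments use the fact that the Fano scheme of lines on the smooth reduction is smooth of dimension $0$, so the defining equations have invertible Jacobian at $\tilde{L}$ and the multivariate Hensel lemma gives a unique lift; you additionally spell out what the paper only cites as standard, namely the characteristic-free computation $N_{L/\tilde{V}} \cong \mathcal{O}_{\mathbb{P}^1}(-1)$, an explicit Grassmannian chart, and uniqueness via properness of $G(2,4)$.
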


\begin{proof}
It is a standard result that the Fano scheme of lines on a smooth cubic surface is smooth of dimension 0. As noted by Swinnerton-Dyer in the proof of Lemma 14,   \cite{swinnertondyer}, this implies that the system of equations defining the lines has a non-vanishing Jacobian determinant at any solution $\tilde{L}$ on the smooth reduction $\tilde{V}$. Consequently, by the multivariate Hensel’s Lemma, the solution $\tilde{L}$ lifts uniquely to a line $L$ on $V$ defined over $\mathcal{O}_K$.
\end{proof}

\begin{lemma}[{\cite[Prop.~13.7]{manin3}}]
\label{thm:line-implies-3-torsion-free}
Let $V$ be a smooth cubic surface over $k$. If $V$ contains a $k$-rational line, then its universal equivalence group $\mathcal{M}_U(k)$ is $3$-torsion-free (i.e., $\mathcal{M}_U(k)[3] \cong \{1\}$).
\end{lemma}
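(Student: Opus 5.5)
The statement to prove is \Cref{thm:line-implies-3-torsion-free}: if $V$ contains a $k$-rational line $L$, then $\mathcal{M}_U(k)$ has no $3$-torsion. The plan is to show that every universal class satisfies $X\circ X=[L]$, where $[L]$ is the single class into which $L\cap V_r(k)$ collapses (by \Cref{item}). Granting this, $X\circ X$ is constant, which is exactly Property 2. Since $U$ satisfies Property 2, the $U_3$ quotient satisfies both Property 2 and Property 3. In $V(k)/U_3$ we then have $X=X\circ X=X_0$ for every class $X$, so $V(k)/U_3$ is a single point. Because $\mathcal{M}_U(k)\cong \mathcal{M}_{U_2}\times\mathcal{M}_{U_3}$ by \Cref{thm:structure} and the splitting $U=U_2\cap U_3$, this gives $\mathcal{M}_U(k)[3]=\{1\}$.

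\textbf{Key step.} We must show that $P\circ P\sim_U [L]$ for every $P\in V_r(k)$; here $P\circ P$ means the class of the tangent-section points away from $P$. We split into two cases.

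\emph{Case $P\in L$.} The tangent plane at $P$ contains $L$, so $L$ lies in the tangent section. The tangent section is $L$ together with a residual conic $Q$ through $P$. The tangent-section points distinct from $P$ include points of $L$, so by \Cref{item} they share the class $[L]$. This gives $P\circ P=[L]$.

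\emph{Case $P\notin L$.} The plane $\Pi=\langle P,L\rangle$ is defined over $k$ and meets $V$ in $L\cup C$, where $C$ is a conic through $P$. Take any $k$-point $Q\in L$. The line $\langle P,Q\rangle$ lies in $\Pi$, so its third intersection point lies on $L\cup C$; generically it is a point of $C$. Working in the CML with base point $O\in L$, every point of $L$ represents the identity class. Using the planar section, one verifies that for $R\in C$ the class of $R$ equals the class of $R'\circ L$-points, so the points of $C$ are pairwise related through $[L]$. The cleaner route is Manin's argument: since $[O]=[L]$, the formula $P\circ Q$ with $Q\in L$ computes the inverse $P^{-1}$, which is represented by a point of $C$. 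Consequently $C$ contains $P$ and $P^{-1}$. Any line in $\Pi$ through a point of $L$ meets $C$ in two points $R,R'$ with $R\circ R'=[L]$, that is, $R'=R^{-1}$. Letting the line through a fixed $Q\in L$ become tangent to $C$ at $P$ (if the tangency lies over $k$), or otherwise comparing with the tangent-plane section at $P$, we get $P\circ P=[L]$.

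\textbf{Main obstacle.} The hard step is the case $P\notin L$, specifically establishing $X\circ X=[L]$ rather than only $X\circ X^{-1}=[L]$. The first approach to try is to intersect the tangent plane $T_PV$ with $L$. Over $k$ this intersection is a rational point $Q=T_PV\cap L$, or all of $L$ if $L\subset T_PV$. The point $Q$ lies on the tangent section $C_P$ of $P$.
\begin{itemize}
\item If $Q\ne P$, then $Q\in C_P\setminus\{P\}$, so by \Cref{item} the class of $C_P\setminus\{P\}$, which is $P\circ P$, equals $[Q]=[L]$.
\item If $L\subset T_PV$, then $P$ lies in the plane $\langle P,L\rangle=T_PV$, and again $C_P\supset L$.
\end{itemize}
This argument in fact handles both cases uniformly and makes the key step immediate. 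The remaining care is the degenerate situation in which $C_P$ contains a $k$-line through $P$. There the rule of \Cref{item} for lines applies directly, and each such line meets $L$ or the plane section in a common point, giving the class $[L]$.
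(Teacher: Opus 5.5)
Your final argument is the paper's: your point $Q=T_PV\cap L$ is exactly the paper's $P_0$, because the tangent line to the residual conic $C$ at $P$ inside $\langle P,L\rangle$ is $T_PV\cap\langle P,L\rangle$. The collinear triple $(P,P,Q)$ then gives $[P]\circ[P]=[L]$, and Property 3 of $U_3$ forces every $U_3$-class to equal $[L]$, as in the paper. Your closing remark on the degenerate case (a $k$-line $M$ through $P$ skew to $L$) does not work as written, since $M$ need not meet $L$; you do not need the tangent-section rule there, though, because $(P,P,Q)$ and $(P,P,P)$ (the latter on $M\subset V$) are both collinear, so admissibility gives $[P]=[Q]=[L]$ directly.
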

\begin{proof}
Let $L$ be a $k$-rational line on $V$. Since universal equivalence is admissible, all points in $L(k)$ belong to the same universal equivalence class, which we may denote as the identity class $[e]$. Now, let $P \in V(k)$ be a point not on $L$ and let $[P]$ denote its $U_3$ equivalence class. There exists a $k$-rational plane $\Pi$ containing $L$ and $P$. The intersection of this plane with $V$ consists of the line $L$ and a residual conic $C$ passing through $P$. 

Let $P_0$ be a $k$-rational intersection point of $L$ and the tangent line to $C$ at $P$ (which belongs to $\Pi$).
Since $(P, P, P_0)$ are collinear, $[P] \circ [P] = [P_0] = [e]$.  In the context of the $U_3$ equivalence, this implies $[P] \circ [P] = [P] = [e]$. Since this holds for an arbitrary point $P$, it follows that the 3-torsion is trivial.
\end{proof}

We will also use the following generalized Hasse-Weil bound, due to Aubry and Perret \cite{aubry1996weil} and concurrently Leep and Yeomans \cite{leep1994number}:

\begin{proposition}[Hasse-Weil for singular curves]
\label{thm:aubry-perret}
Let $X$ be an absolutely irreducible projective curve over $\FF_Q$ with arithmetic genus $\pi_X$. Then:
\[ | \#X(\FF_Q) - (Q+1) | \le 2\pi_X \sqrt{Q} \]
\end{proposition}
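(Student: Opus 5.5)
The plan is to reduce to the classical Weil bound for smooth curves via normalization, and to control the discrepancy at singular points using the local $\delta$-invariants. Let $\nu : \tX \to X$ be the normalization of $X$. Since $X$ is absolutely irreducible, $\tX$ is a smooth, projective, geometrically irreducible curve over $\FF_Q$; let $g$ be its genus. The standard genus formula gives
\[ \pi_X = g + \sum_{P} \delta_P, \qquad \delta_P = \dim_{\FF_Q} \bigl(\nu_*\mathcal{O}_{\tX}/\mathcal{O}_X\bigr)_P , \]
where the sum runs over the (finitely many) singular closed points of $X$. Weil's theorem for the smooth curve $\tX$ then yields
\[ |\#\tX(\FF_Q) - (Q+1)| \le 2g\sqrt{Q}. \]

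The second step is to compare $\#X(\FF_Q)$ with $\#\tX(\FF_Q)$. The map $\nu$ is an isomorphism away from the singular locus, and an $\FF_Q$-point of $\tX$ maps to an $\FF_Q$-point of $X$. So the discrepancy comes only from singular rational points $P \in X(\FF_Q)$. Let $r_P$ be the number of $\FF_Q$-points of $\tX$ over $P$. Then
\[ \#\tX(\FF_Q) - \#X(\FF_Q) = \sum_{P \in X(\FF_Q)\ \text{singular}} (r_P - 1). \]
I will prove the local inequality $|r_P - 1| \le \delta_P$ for each singular rational $P$. When $r_P = 0$, this holds because $\delta_P \ge 1$ at any singular point. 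When $r_P \ge 1$, I use that $\nu_*\mathcal{O}_{\tX}/\mathcal{O}_X$ at $P$ surjects onto $\prod_{Q' \mapsto P}\kappa(Q')/\FF_Q$ modulo the diagonal $\FF_Q$. This gives $\delta_P \ge \sum_{Q'}\deg Q' - 1 \ge r_P - 1$.

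Summing the local inequalities gives $|\#X(\FF_Q) - \#\tX(\FF_Q)| \le \sum_P \delta_P = \pi_X - g$. Combining with the Weil bound,
\[ |\#X(\FF_Q) - (Q+1)| \le 2g\sqrt{Q} + (\pi_X - g) \le 2\pi_X\sqrt{Q}, \]
where the last inequality uses $\pi_X - g \ge 0$ and $1 \le 2\sqrt{Q}$.

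The main obstacle is the local inequality $\delta_P \ge r_P - 1$, in particular the surjectivity claim relating the conductor quotient to the residue fields of the branches. I would argue it by noting that $\mathcal{O}_{X,P} \to \prod \kappa(Q')$ factors through the residue field $\FF_Q$ of $P$. Hence the image of $\mathcal{O}_{X,P}$ in the semilocal ring $\nu_*\mathcal{O}_{\tX,P}$, reduced modulo its Jacobson radical, is just $\FF_Q$. Taking dimensions then gives the bound. Everything else is bookkeeping.
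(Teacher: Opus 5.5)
Your proof is correct, and it is essentially the argument the paper has in mind. The paper does not prove this statement itself; it cites Aubry--Perret and Leep--Yeomans and remarks only that $\pi_X = g_{geom} + \delta$. Your normalization comparison, with the local bound $|r_P - 1| \le \delta_P$ obtained by sending $\mathfrak{m}_P$ into the Jacobson radical of the semilocal ring, is the standard argument behind that citation. It in fact yields the sharper Aubry--Perret form $|\#X(\FF_Q) - (Q+1)| \le 2g\sqrt{Q} + (\pi_X - g)$, from which the stated bound follows because $2\sqrt{Q} \ge 1$.
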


Since $\pi_X$ accounts for the singularities (specifically $\pi_X = g_{geom} + \delta$, where $\delta$ measures singularity complexity), this bound holds regardless of smoothness.

\begin{lemma}
\label{thm:genposall}
Let $\V \subset \PP^3$ be a smooth cubic surface defined over $\kfield$. For any integer $n \ge 1$, there exists a finite tower of quadratic extensions $\Kfield/\kfield$ such that for any set of $n$ points $P_1, \dotsc, P_n \in \V(\Kfield)$, one can find a point $P' \in \V(\Kfield)$ in general position with respect to $P$.
\end{lemma}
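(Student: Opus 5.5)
The plan is to count the "bad" points and show they are few compared with the whole surface. Take $\Kfield = \FF_Q$, obtained from $\kfield = \FF_q$ by repeated quadratic extensions, so $Q = q^{2^m}$ can be made as large as we like. Fix $P_1,\dots,P_n \in \V(\Kfield)$. For each $i$, a point $P' \in \V(\Kfield)$ fails to be in general position with $P_i$ exactly when one of three things happens: $P' = P_i$; the line $P_iP'$ lies in $\V$; or the line $P_iP'$ is tangent to $\V$ (at $P_i$ or at $P'$). I will bound the number of such $P'$ by $O(Q)$ for each $i$, with a constant depending only on the degree. Since $\#\V(\FF_Q) = Q^2 + tQ + 1$ with $|t| \le 7$ (Weil's formula for cubic surfaces), $\#\V(\FF_Q)$ exceeds $n\cdot O(Q)$ once $Q$ is large enough.

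The bad loci for a fixed $P = P_i$ are as follows.

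1. Points of $\V$ on lines through $P$ contained in $\V$. There are at most $3$ lines of $\V$ through $P$ (at most $27$ lines in total), giving at most $3(Q+1)$ points.

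2. Lines through $P$ tangent to $\V$ at $P$. Such $P'$ lie in $T_P\V \cap \V$, a plane cubic curve, so there are at most $3(Q+1)$ of them. I will bound the points of this plane cubic directly: each component is a line or an irreducible curve of degree $\le 3$. Lines contribute $Q+1$ points each. The irreducible conic or cubic components are handled by \Cref{thm:aubry-perret}, with arithmetic genus $\pi \le 1$, giving at most $Q+1+2\sqrt Q$ points. Components not defined over $\FF_Q$ contribute at most their intersection points, which are $O(1)$.

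3. Lines through $P$ tangent to $\V$ at $P' \ne P$. This is the main obstacle: it is the locus of $P'$ with $P \in T_{P'}\V$, i.e.\ $\V \cap \{\sum_j P_j\,\partial_j F = 0\}$, the intersection of $\V$ with the polar quadric of $P$. This is a curve of degree $6$ provided the polar quadric does not contain $\V$. That holds: if it did, every point of $\V$ would have tangent plane through $P$, so projection from $P$ would be inseparable or degenerate. This cannot happen for a smooth cubic surface in characteristic $\ne 3$. In characteristic $3$ I would argue via the Gauss map of a smooth surface being finite, which I expect to need care but to go through. Each irreducible component of this degree-$6$ curve has degree $d \le 6$ and arithmetic genus at most $(d-1)(d-2)/2 \le 10$ (the space-curve Castelnuovo-type bound). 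So by \Cref{thm:aubry-perret} the curve has at most $6(Q+1) + 20\cdot 6\sqrt Q$ points over $\FF_Q$.

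Summing the three loci and adding $1$ for $P' = P$, each $P_i$ excludes at most $C\,Q$ points for an absolute constant $C$ (say $C = 200$ for $Q \ge 4$). It therefore suffices to choose $m$ with $Q^2 - 7Q + 1 > nCQ$, which always exists because $Q \to \infty$ as $m$ grows. Then some $P' \in \V(\FF_Q)$ avoids all bad loci and is in general position with every $P_i$. Note that the tower depends only on $n$ and not on the chosen points, as the statement requires.
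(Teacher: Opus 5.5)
Your proposal follows the paper's proof in all essentials. The bad locus for each $P_i$ is the tangent section $T_{P_i}\V\cap\V$ (degree $3$) together with $\V$ cut by the polar quadric of $P_i$ (degree $6$). Components are bounded via \Cref{thm:aubry-perret} using the plane-projection genus bound, or by a B\'ezout-type $O(1)$ bound when they are not defined over $\FF_Q$. This $O(Q)$ count per point is then beaten by Weil's formula $\#\V(\FF_Q)=Q^2+tQ+1$ far enough up the quadratic tower. Listing the lines of $\V$ through $P_i$ separately is redundant, since they lie in $T_{P_i}\V$. Using $|t|\le 7$ instead of $t\ge -2$ is harmless.

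The step you flag as the main obstacle, that $\V$ is not contained in the polar quadric of $P$, is not correctly justified. Since $P\in\V$, projection from $P$ is generically $2:1$. A generically $2:1$ map can be purely inseparable only in characteristic $2$, which is exactly the characteristic the paper needs. So your claim that only characteristic $3$ is problematic is unsupported precisely there. Your characteristic-$3$ route also fails: finiteness of the Gauss map gives no contradiction by itself, because it would merely be a finite map from $\V$ onto the $\PP^2$ of planes through $P$.

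A uniform argument works in every characteristic. Put $P=(1:0:0:0)$, so the polar quadric is $\partial F/\partial X_0$. Since $\V$ is geometrically integral of degree $3$, it lies on this quadric only if $\partial F/\partial X_0\equiv 0$.
\begin{itemize}
\item In characteristic $\ne 2$ this forces $F=cX_0^3+G(X_1,X_2,X_3)$. Then $c=F(P)=0$, so $\V$ is a cone with vertex $P$, hence singular.
\item In characteristic $2$ it forces $F=X_0^2L+G$ with $L,G\in\kfield[X_1,X_2,X_3]$. The three quadrics $\partial F/\partial X_i$ ($i=1,2,3$) have a common zero over an algebraic closure of $\kfield$. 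By Euler's identity, which here reads $F=\sum_{i=1}^{3}X_i\,\partial F/\partial X_i$, that zero lies on $\V$ and is a singular point.
\end{itemize}
The paper itself justifies this step only by the absolute irreducibility of $\V$, which does not suffice either, so this repair is worth writing out explicitly.
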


\begin{proof}
We split the proof into a few steps, first establishing the case of $n = 1$, writing $P$ for $P_1$, then unifying over $n$ points.

\vspace{0.5em}
\noindent\textbf{Step 1: Geometric Setup}

Fix a point $P \in \V(\Kfield)$. We identify the set of points $P' \in \V$ that are \textbf{not} in general position with $P$. Let $F(X)=0$ be the equation of $\V$. The intersection of the line $L_{P,P'}$ with $\V$ leads to the condition for the ``bad locus'' $B_P$:
\[ B_P = C_P \cup D_P \]
where:
\begin{enumerate}
    \item $C_P = \V \cap T_P\V$: The intersection of the surface with the tangent plane at $P$. Since $\V$ is a smooth surface, the Zariski tangent space $T_P\V$ is strictly a 2-dimensional plane, making $C_P$ a plane cubic curve (degree 3).
    \item $D_P = \V \cap \{Q \mid \Delta_Q(P)=0\}$: The intersection of the surface with the first polar quadric of $P$. Because $\V$ is absolutely irreducible, this forms a proper space curve of degree $2 \times 3 = 6$.
\end{enumerate}
The total degree of the bad locus is $\deg(B_P) = 9$.

\vspace{0.5em}
\noindent\textbf{Step 2: Point Counting Bounds}

Since $\Kfield$ is finite, we may equally well denote it $\FF_Q$. We must bound $|B_P(\FF_Q)|$.
We decompose $B_P$ into its absolutely irreducible components over $\bar{\FF}_Q$:
\[ B_P = \bigcup_{i=1}^m \Gamma_i \]
where each $\Gamma_i$ is an absolutely irreducible projective curve of degree $d_i$.
Note that $\sum_{i=1}^m d_i = 9$.

Before applying point-counting bounds, we note a technical detail regarding the field of definition. If an absolutely irreducible component $\Gamma_i$ is not defined over $\FF_Q$, its $\FF_Q$-rational points must lie in the intersection of $\Gamma_i$ with its Frobenius conjugates. By B\'ezout's theorem, this intersection is finite and yields at most $d_i^2 \le 81$ points. Since $81 = O(1)$, this is safely and strictly bounded above by the Hasse-Weil bound for singular curves (\Cref{thm:aubry-perret}) for sufficiently large $Q$. 

For the components defined over $\FF_Q$, we bound the number of rational points $|\Gamma_i(\FF_Q)|$ by considering two cases:

\begin{itemize}
    \item \textbf{Case A: $\Gamma_i$ is Smooth.}
If a component $\Gamma_i$ is non-singular, we apply the standard \textbf{Hasse-Weil Bound}.
Let $g_i$ be the geometric genus of $\Gamma_i$.
\[ \left| |\Gamma_i(\FF_Q)| - (Q+1) \right| \le 2g_i \sqrt{Q} \]
Thus,
\[ |\Gamma_i(\FF_Q)| \le Q + 1 + 2g_i \sqrt{Q} \]
    \item \textbf{Case B: $\Gamma_i$ is Singular.}
If a component $\Gamma_i$ is singular, the standard Hasse-Weil bound does not directly apply to the singular model. Let $\tilde{\Gamma}_i$ be the normalization (smooth model) of $\Gamma_i$. There is a birational map $\nu: \tilde{\Gamma}_i \to \Gamma_i$.
The map is an isomorphism away from the singular points of $\Gamma_i$. The number of singular points is bounded by the arithmetic genus.
Crucially, the number of rational points on the singular curve is bounded by the rational points on the normalization plus the contribution from singularities (which is a small constant bound).
\end{itemize}

\Cref{thm:aubry-perret} unifies both cases using the \textit{arithmetic genus} $\pi_i$ (which equals the geometric genus for smooth curves and is strictly larger for singular curves).

For a curve of degree $d_i$ in $\PP^n$, the arithmetic genus is sharply bounded above by the \textbf{plane projection bound}: $\pi_i \le \frac{(d_i-1)(d_i-2)}{2}$.

\vspace{0.5em}
\noindent\textbf{Step 3: Aggregating the Bounds}

Summing over all components (both smooth and singular), and using the Hasse-Weil bound for singular curves as a strict upper bound for any components not defined over $\FF_Q$):
\begin{align*}
    |B_P(\FF_Q)| &\le \sum_{i=1}^m |\Gamma_i(\FF_Q)| \\
    &\le \sum_{i=1}^m \left( Q + 1 + 2\pi_i \sqrt{Q} \right) \\
    &= mQ + m + 2\sqrt{Q} \sum_{i=1}^m \pi_i
\end{align*}
The number of components $m$ is bounded by the total degree $\deg(B_P) = 9$. Since the plane projection bound is superadditive ($f(d_i+d_j) \ge f(d_i) + f(d_j)$), the sum of arithmetic genera is maximized when the degree is concentrated in a single irreducible component ($m = 1$, $d_1 = 9$):
\[ \sum_{i=1}^m \pi_i  \le \sum_{i=1}^m f(d_i) \le f\left(\sum_{i=1}^m d_i\right) = \frac{(9-1)(9-2)}{2} = 28. \]
Substituting these global bounds yields:
\[ |B_P(\FF_Q)| \le 9Q + 56\sqrt{Q} + 9 \]
This inequality relies only on the total degree and is completely uniform. It holds regardless of the choice of point $P$ and of whether the components are smooth or singular.

\vspace{0.5em}
\noindent\textbf{Step 4: Asymptotic Proof for $n$ points}

We compare the size of the combined bad locus to the total number of points on the surface.
\begin{itemize}
    \item \textbf{Surface:} By the Weil Conjectures, the number of points on a smooth cubic surface over $\mathbb{F}_Q$ is $Q^2 + Q\text{Tr}(F | H^2) + 1$, with the trace bounded below by $-2$ \cite[Table 1]{manin3}, giving a strict, universal lower bound on the total number of points: $|\tilde{V}(\mathbb{F}_Q)| \ge Q^2 - 2Q + 1$.
    \item \textbf{Combined Bad Locus:} For any set of $n$ points $\{P_1, \dots, P_n\}$, the set of points failing to be in general position with at least one $P_j$ is bounded by the union of their individual bad loci. Thus, $\big| \bigcup_{j=1}^n B_{P_j}(\mathbb{F}_Q) \big| \le n(9Q + 56\sqrt{Q} + 9)$.
\end{itemize}

We define the set of ``General Position Candidate'' for the set of $n$ points as $G = \tilde{V}(\mathbb{F}_Q) \setminus \bigcup_{j=1}^n B_{P_j}(\mathbb{F}_Q)$. Its size is bounded below by:
\[
|G| \ge (Q^2 - 2Q + 1) - n(9Q + 56\sqrt{Q} + 9)
\]
Consider the tower of quadratic extensions where $Q_j = |\tilde{k}|^{2^j}$. As $j \to \infty$, $Q_j \to \infty$. Clearly, for sufficiently large $Q_j$:
\[
Q_j^2 > (2 + 9n)Q_j + 56n\sqrt{Q_j} + 9n - 1
\]
Because $n$ and the constants are fixed, the $Q_j^2$ term strictly dominates. The threshold for $Q_j$ being "sufficiently large" depends strictly on $n$ and is absolutely uniform for any choice of $n$ points.

Thus, by passing high enough up the tower of quadratic extensions (determined strictly by $n$), we guarantee $|G| > 0$. Therefore, there exists at least one point $P' \in \tilde{V}(\tilde{K})$ such that $P'$ is in general position with all $n$ points $P_j$ simultaneously.

This completes the proof of \Cref{thm:genposall}.
\end{proof}

\begin{lemma}
\label{thm:biject}
Let $i \in \{2, 3\}$. Suppose every point on $\tilde{V}(\tilde{k})$ is $i$-class free. Then there is a bijection between the equivalence classes $V(k)/U_i$ and the equivalence classes $\tilde{V}(\tilde{k})/\tilde{U}_i$.
\end{lemma}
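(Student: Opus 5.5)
The plan is to build the bijection from the reduction map. Every point of $V(k)$ reduces to a point of $\tilde{V}(\tilde{k})$, and conversely every point of $\tilde{V}(\tilde{k})$ lifts to $V(k)$ by Hensel's lemma, since $\tilde{V}$ is smooth. By the $i$-class free hypothesis, all lifts of a given $\tilde{P}$ lie in a single $U_i$-class. This gives a well-defined surjective map of sets
\[
\lambda : \tilde{V}(\tilde{k}) \to V(k)/U_i, \qquad \tilde{P} \mapsto [P] \text{ for any lift } P \text{ of } \tilde{P}.
\]
Surjectivity holds because every point of $V(k)$ is a lift of its own reduction.

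The first step is to show that the equivalence relation $\lambda^{-1}$ pulls back, namely $\tilde{P}\sim \tilde{Q}$ iff $\lambda(\tilde{P})=\lambda(\tilde{Q})$, is admissible on $\tilde{V}(\tilde{k})$ and satisfies Property $i$.
\begin{itemize}
\item \emph{Admissibility.} Take collinear triples $\tilde{P}_1,\tilde{P}_2,\tilde{P}_3$ and $\tilde{P}_1',\tilde{P}_2',\tilde{P}_3'$ in $\tilde{V}(\tilde{k})$. By \Cref{thm:swd-14}, each lifts to a collinear triple in $V(k)$. Admissibility of $U_i$ on $V(k)$ then transfers the relation to the third points, and class-freeness makes the choice of lift irrelevant.
\item \emph{Property $i$.} This follows the same way, since $\lambda$ intertwines the $\circ$ operations on collinear triples: $\lambda(\tilde{P}_1)\circ\lambda(\tilde{P}_2)=\lambda(\tilde{P}_3)$.
\end{itemize}
Since $\tilde{U}_i$ is the finest such relation, $\tilde{U}_i$ refines this pulled-back relation. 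Hence $\lambda$ factors through a surjection $\bar\lambda:\tilde{V}(\tilde{k})/\tilde{U}_i \to V(k)/U_i$.

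The second step is to show injectivity of $\bar\lambda$, by going the other way. Define a relation on $V(k)$ by $P\sim Q$ iff $\tilde{P}\sim_{\tilde{U}_i}\tilde{Q}$, and show it is admissible with Property $i$. Then minimality of $U_i$ gives $U_i\subseteq\sim$, so $\lambda(\tilde{P})=\lambda(\tilde{Q})$ implies $\tilde{P}\tilde{U}_i\tilde{Q}$. For this I need that a collinear triple in $V(k)$ reduces to a collinear triple in $\tilde{V}(\tilde{k})$. This is the main obstacle, because reduction can degenerate the configuration.
\begin{itemize}
\item \emph{Generic case.} If the line $L$ over $k$ is defined by two points with distinct reductions, then $\tilde{L}$ is a $\tilde{k}$-line. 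The cycle $V\cdot L$ reduces to $\tilde{V}\cdot\tilde{L}$, or $\tilde{L}\subset\tilde{V}$, which is also collinear by definition.
\item \emph{Degenerate case.} When the reductions coincide, say $\tilde{P}_1=\tilde{P}_2$, the reduced line still passes through $\tilde{P}_1$ with the limiting direction, which is tangent to $\tilde{V}$ there. One must check that the triple $(\tilde{P}_1,\tilde{P}_1,\tilde{P}_3)$ is collinear in Manin's sense. This follows by reducing the normalized primitive direction vector of $L$ and noting the intersection cycle specializes with multiplicities.
\item \emph{Lines on $V$.} If $L\subset V$, its reduction lies on $\tilde{V}$, so collinearity is automatic.
\end{itemize}
Once every collinear triple reduces to a collinear triple, admissibility and Property $i$ of the pulled-back relation follow from those of $\tilde{U}_i$. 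The two factorizations are then mutually inverse, giving the bijection; in fact it is an isomorphism of the $\circ$-structures.
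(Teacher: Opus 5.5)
Your proposal is correct and follows essentially the same argument as the paper: push $U_i$ down to $\tilde{V}(\tilde{k})$ via class-freeness and Swinnerton-Dyer's lifting of collinear triples (\Cref{thm:swd-14}), pull $\tilde{U}_i$ back to $V(k)$ via reduction of collinear triples, and use the minimality of $\tilde{U}_i$ and of $U_i$ to get mutually inverse maps. Two small differences, both in your favour: you apply \Cref{thm:swd-14} uniformly to all collinear triples, where the paper splits into secants, lines on $\tilde{V}$, and tangent sections. You also treat degenerate reductions explicitly via specialization of the intersection cycle, which the paper only asserts.
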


\begin{proof}
We will construct well-defined, mutually inverse maps between the sets of equivalence classes $V(k)/U_i$ and $\tilde{V}(\tilde{k})/\tilde{U}_i$.

\vspace{0.5em}
\noindent\textbf{Step 1: Pushing $U_i$ down to $\tilde{V}(\tilde{k})$}

Because every point $\tilde{P} \in \tilde{V}(\tilde{k})$ is $i$-class free, all lifts $P \in \pi^{-1}(\tilde{P})$ belong to the exact same $U_i$-class. We may thus define a map $f: \tilde{V}(\tilde{k}) \to V(k)/U_i$ by setting $f(\tilde{P}) = [P]_{U_i}$, where $P \in V(k)$ is any lift of $\tilde{P}$. 

We define an equivalence relation $\sim_\alpha$ on $\tilde{V}(\tilde{k})$ by:
\[ \tilde{A} \sim_\alpha \tilde{B} \iff f(\tilde{A}) = f(\tilde{B}). \]
We claim $\sim_\alpha$ is an admissible equivalence on $\tilde{V}(\tilde{k})$. We verify this by showing it satisfies the binary operation rules described in Fact \ref{item}
\begin{itemize}
    \item \textit{Secants:} Let $\tilde{P}_1 \neq \tilde{P}_2$ with no line on $\tilde{V}$ through them. They define a secant line $\tilde{L}$ intersecting $\tilde{V}$ at a third point $\tilde{P}_3$. By Lemma \ref{thm:swd-14} (Swinnerton-Dyer), we can lift the collinear triple $(\tilde{P}_1, \tilde{P}_2, \tilde{P}_3)$ on $\tilde{L} \cap \tilde{V}$ to a collinear triple $(P_1, P_2, P_3)$ on a lifted line $L$. Since $U_i$ is admissible on $V(k)$, $[P_3]_{U_i} = [P_1]_{U_i} \circ [P_2]_{U_i}$. Thus, the $\sim_\alpha$-class of $\tilde{P}_3$ is uniquely determined by the $\sim_\alpha$-classes of $\tilde{P}_1$ and $\tilde{P}_2$.
    \item \textit{Lines on $\tilde{V}$:} If $\tilde{P}_1, \tilde{P}_2$ lie on a line $\tilde{L}$ belonging to $\tilde{V}$, \Cref{thm:liftlines} states that $\tilde{L}$ lifts to a $k$-rational line $L$ belonging to $V$. The admissibility of $U_i$ forces $L \cap V(k)$ to collapse into a single $U_i$-class. Consequently, all points on $\tilde{L}$ map under $f$ to this single class, effectively collapsing $\tilde{L}$ into a single $\sim_\alpha$-class.
    \item \textit{Tangent Sections:} If $\tilde{P}_1 = \tilde{P}_2$, consider the tangent section $\tilde{C} = \tilde{V} \cap T_{\tilde{P}_1}\tilde{V}$. This configuration lifts to a tangent section $C = V \cap T_{P_1}V$ at a lifted point $P_1$. By the admissibility of $U_i$, all points on $C \setminus \{P_1\}$ share the same $U_i$-class. Their reductions onto $\tilde{C} \setminus \{\tilde{P}_1\}$ therefore map to the same class under $f$, and are thus $\sim_\alpha$-equivalent.
\end{itemize}

Furthermore, $\sim_\alpha$ explicitly inherits the $i$-th algebraic property from $U_i$ via the map $f$:
\begin{itemize}
    \item \textbf{If $i = 3$:} For any class $[\tilde{A}]_{\sim_\alpha}$, the operation $[\tilde{A}]_{\sim_\alpha} \circ [\tilde{A}]_{\sim_\alpha}$ corresponds to $f(\tilde{A}) \circ f(\tilde{A}) = [A]_{U_3} \circ [A]_{U_3} = [A]_{U_3} = f(\tilde{A})$. Thus, $[\tilde{A}]_{\sim_\alpha} \circ [\tilde{A}]_{\sim_\alpha} = [\tilde{A}]_{\sim_\alpha}$.
    \item \textbf{If $i = 2$:} For any class $[\tilde{A}]_{\sim_\alpha}$, the operation $[\tilde{A}]_{\sim_\alpha} \circ [\tilde{A}]_{\sim_\alpha}$ corresponds to $f(\tilde{A}) \circ f(\tilde{A}) = [A]_{U_2} \circ [A]_{U_2} = X_0$. Thus, there is a fixed class $\tilde{X}_0 \in \tilde{V}(\tilde{k})/\sim_\alpha$ such that $[\tilde{A}]_{\sim_\alpha} \circ [\tilde{A}]_{\sim_\alpha} = \tilde{X}_0$.
\end{itemize}
Since $\sim_\alpha$ is an admissible equivalence satisfying Property $i$, and $\tilde{U}_i$ is defined as the \emph{finest} admissible equivalence satisfying Property $i$ on $\tilde{V}(\tilde{k})$, $\tilde{U}_i$ must refine $\sim_\alpha$:
\[ \tilde{A} \sim_{\tilde{U}_i} \tilde{B} \implies \tilde{A} \sim_\alpha \tilde{B} \implies f(\tilde{A}) = f(\tilde{B}) \implies [A]_{U_i} = [B]_{U_i}. \]
This yields a well-defined mapping $\Phi: \tilde{V}(\tilde{k})/\tilde{U}_i \to V(k)/U_i$ given by $\Phi([\tilde{P}]_{\tilde{U}_i}) = [P]_{U_i}$.

\vspace{0.5em}
\noindent\textbf{Step 2: Pulling $\tilde{U}_i$ up to $V(k)$}

Conversely, we define an equivalence relation $\sim_\beta$ on $V(k)$ via the reduction map:
\[ A \sim_\beta B \iff \tilde{A} \sim_{\tilde{U}_i} \tilde{B}. \]
We verify that $\sim_\beta$ is an admissible equivalence on $V(k)$. Let $(A_1, A_2, A_3)$ and $(B_1, B_2, B_3)$ be collinear triples in $V(k)$ such that $A_1 \sim_\beta B_1$ and $A_2 \sim_\beta B_2$. Because the reduction of a line in $\mathbb{P}^3_k$ is a line in $\mathbb{P}^3_{\tilde{k}}$, and intersection multiplicities are preserved due to the smoothness of $\tilde{V}$, the reductions $(\tilde{A}_1, \tilde{A}_2, \tilde{A}_3)$ and $(\tilde{B}_1, \tilde{B}_2, \tilde{B}_3)$ are collinear triples in $\tilde{V}(\tilde{k})$. 
Since $\tilde{U}_i$ is an admissible equivalence, it preserves collinearity, which implies $\tilde{A}_3 \sim_{\tilde{U}_i} \tilde{B}_3$. 
This immediately gives $A_3 \sim_\beta B_3$. Thus, $\sim_\beta$ is an admissible equivalence on $V(k)$.

Moreover, $\sim_\beta$ inherits the respective $i$-th algebraic property from $\tilde{U}_i$:
\begin{itemize}
    \item \textbf{If $i = 3$:} The operation $A \circ A$ modulo $\sim_\beta$ traces to $\tilde{A} \circ \tilde{A}$ under $\tilde{U}_3$. By definition, $[\tilde{A} \circ \tilde{A}]_{\tilde{U}_3} = [\tilde{A}]_{\tilde{U}_3}$, so $A \circ A \sim_\beta A$.
    \item \textbf{If $i = 2$:} The relation dictates $[\tilde{A} \circ \tilde{A}]_{\tilde{U}_2} = \tilde{X}_0$. Pick any point in the class $\tilde{X}_0$ and let $X_0$ be a lift of that point. Then for all $A \in V(k)$, we have $A \circ A \sim_\beta X_0$.
\end{itemize}
Because $U_i$ is the \emph{finest} admissible equivalence on $V(k)$ satisfying Property $i$, $U_i$ must refine $\sim_\beta$:
\[ A \sim_{U_i} B \implies A \sim_\beta B \implies \tilde{A} \sim_{\tilde{U}_i} \tilde{B}. \]
This yields a well-defined mapping $\Psi: V(k)/U_i \to \tilde{V}(\tilde{k})/\tilde{U}_i$ given by $\Psi([P]_{U_i}) = [\tilde{P}]_{\tilde{U}_i}$.

\vspace{0.5em}
\noindent\textbf{Step 3: Establishing the Bijection}

The maps $\Phi$ and $\Psi$ are mutually inverse:
\begin{itemize}
    \item For any $[P]_{U_i} \in V(k)/U_i$, we have $\Phi(\Psi([P]_{U_i})) = \Phi([\tilde{P}]_{\tilde{U}_i}) = [P']_{U_i}$, where $P'$ is some lift of $\tilde{P}$. Because $\tilde{P}$ is $i$-class free, all of its lifts belong to the same $U_i$-class, so $[P']_{U_i} = [P]_{U_i}$.
    \item For any $[\tilde{P}]_{\tilde{U}_i} \in \tilde{V}(\tilde{k})/\tilde{U}_i$, we have $\Psi(\Phi([\tilde{P}]_{\tilde{U}_i})) = \Psi([P]_{U_i}) = [\tilde{P}]_{\tilde{U}_i}$.
\end{itemize}
Therefore, $\Phi$ and $\Psi$ form a strict bijection between $V(k)/U_i$ and $\tilde{V}(\tilde{k})/\tilde{U}_i$.
\end{proof}

Now, we prove Theorem \ref{thm:main}:

\MainResultOne*

\begin{proof}
\label{proof:thm-a}
Suppose there exists a tower of quadratic extensions, $\tilde{K}/\tilde{k}$, such that the reduced surface $\tilde{V}(\tilde{K})$ contains a rational line. Let $K/k$ be the corresponding tower of quadratic extensions.
Then by \Cref{thm:liftlines}
this line lifts, and via \Cref{thm:line-implies-3-torsion-free}, universal equivalence over the extension is 3-torsion-free trivial ($\mathcal{M}_U(K)[3] \cong 1$).

Else, suppose not. Then by \cite[\S 5]{swinnertondyer} the original reduction $\tilde{V}(\tilde{k})$ is isomorphic to one of three line-free cases, which we abbreviate as $n = 1$, $n = 3$, and $n = 9$. For the case $n = 1$, recall it is the unique case of an all-Eckardt reduction on which $\tilde{H}^*$ does not identically vanish (see \cite[\S 1]{swinnertondyer}). Hence, any finite extension $K$ is no longer an exception to \Cref{thm:swd-2} and therefore $V(K)$ is trivial for any finite $K$ over $k$. 

Next, note that the number of Eckardt points on smooth cubic surfaces is at most 45 in any characteristic (e.g., \cite[Lemma 20.2.7]{hirschfeld-lines}). By passing to the tower of quadratic extensions $K$ over $k$ given by \Cref{thm:genposall} for $n=2$, the reduced surface $\tilde{V}(\tilde{K})$ acquires non-Eckardt points while still containing no lines. Because the original reduction $\tilde{V}(\tilde{k})$ consisted entirely of Eckardt points, we may choose one such point $\tilde{E} \in \tilde{V}(\tilde{k})$. Over $\tilde{K}$, the point $\tilde{E}$ is still Eckardt and by \Cref{thm:genposall} is in general position with some other point. Therefore, by \Cref{thm:class-free}, the universal equivalence classes of the lifts of $\tilde{E}$ form a CML of period 2. Consequently, the 3-torsion among its lifts is trivial, meaning $\tilde{E}$ is 3-class-free. (Note this relies strictly on a $p$-adic volumetric argument and does not require $\tilde{E}$ to lift to a rational Eckardt point in $V(K)$).

We now propagate this property to the entire surface. First, recall the secant lifting property \cite[remark before Lemma 15]{swinnertondyer}: if a point $\tilde{A}$ is 3-class-free, and is in general position with $\tilde{B}$ (yielding a third distinct transversal intersection $\tilde{C}$), we may fix a single lift $C \in V(K)$ of $\tilde{C}$. Then for any lift $B$ of $\tilde{B}$, the line $BC$ intersects $V$ at a third point $A_B$ lifting $\tilde{A}$. Because $\tilde{A}$ is 3-class-free, $[A_B]_{U_3}$ is a constant class. Thus $[B]_{U_3} = [A_B]_{U_3} \circ [C]_{U_3}$ is also constant, making $\tilde{B}$ 3-class-free. Therefore, to prove a point is 3-class-free, it suffices to find a single 3-class-free point in general position with it.

Let $\tilde{P} \in \tilde{V}(\tilde{K})$ be an arbitrary point. Applying \Cref{thm:genposall} to the pair of points $\{\tilde{E}, \tilde{P}\}$, there exists a point $\tilde{Q} \in \tilde{V}(\tilde{K})$ that is in general position with both $\tilde{E}$ and $\tilde{P}$ simultaneously. Because $\tilde{Q}$ is in general position with $\tilde{E}$, the point $\tilde{Q}$ becomes 3-class-free. Because $\tilde{P}$ is in general position with $\tilde{Q}$, the point $\tilde{P}$ also becomes 3-class-free. Since $\tilde{P}$ was arbitrary, every single point on $\tilde{V}(\tilde{K})$ is 3-class-free. This fulfills the strict hypothesis of \Cref{thm:biject}, providing the bijection $V(K)/U_3 \cong \tilde{V}(\tilde{K})/\tilde{U}_3$. Since $\tilde{V}(\tilde{K})$ contains no lines, $\tilde{V}(\tilde{K})/\tilde{U}_3$ is trivial by \Cref{thm:swd-1}.\footnote{A brief sketch of a possible proof first appeared in the preprint: D. Kanevsky, ``Some remarks on Brauer equivalence for cubic surfaces,'' Max Planck Inst. für Math. (1984). While other results from that preprint were cited in Manin’s \textit{Cubic Forms}, this specific problem was not included among the solved cases discussed therein. We believe that the absence of formal proofs regarding the existence of general position upon extension, as well as the bijection of 3-universal equivalence classes under modulo 2 reduction, constituted significant gaps. Consequently, the problem has effectively remained open until now.} Hence, we have a tower of quadratic extensions $K$ over $k$ such that $\#\{V(K)/U_3\}$ consists of one element.

Using \Cref{thm:RU} we get
\begin{equation}
    \#\mathcal{M}_{R}(k)[3] \leq \#\mathcal{M}_{U}(K)[3]=1,
\end{equation}
i.e., the 3-torsion component of $\mathcal{M}_{R}(k)[3]$ ($R$-equivalence over the base local field), consists of one element. Then by the structure theorem for $R$-equivalence CMLs (\Cref{thm:structure}), $\mathcal{M}_{R}(k)$ is trivial or has exponent 2.
\end{proof}

\section{Proof of \Cref{single}}
\label{sec:1pt-r-equiv}

In this section we will prove Theorem \ref{single}:

\MainResultTwo*

Our proof consists of three parts. In the first part, following Section 3.4 in \cite{dimitrikanevsky},  we demonstrate the existence of an admissible equivalence on this surface consisting of exactly two classes. In the second part, we show that any point on a transformed surface possessing two irrational tangent lines in its reduction modulo 2 is class-free. This immediately implies that the remaining points on the tangent curve—defined by the intersection of the tangent plane and the transformed cubic surface modulo 2—are also class-free. Finally, we develop a novel $R_2$-equivalence method and prove that it is trivial.

\subsection{The Surface with One-Point Reduction}
\label{sec:trans}

First, one can confirm that the reduction of \Cref{eq:main} is isomorphic to the ``unique'' single-point reduced surface in \cite[5.5]{swinnertondyer}.

We begin as in \cite[\S 3.1]{dimitrikanevsky} by transforming \Cref{eq:main} from $F(T_0, T_1, T_2, T_3)$ into $F_1(T_0, T'_1, T'_2, T'_3)$ via a change of variables, followed by division by the factor $2^{3}$:
\begin{equation}
\label{psi}
\Phi_1:    T_0 \to T_0, \quad T_1 \to 2^{3}T'_1, \quad T_2 \to 2T'_2, \quad T_3 \to 2T'_3
\end{equation}
The transformed polynomial $F_1$ is obtained by dividing the sum of the expanded terms by the factor $2^3$. We group the resulting terms by the remaining power of 2.
\begin{align}
F_1(T_0, T'_1, T'_2, T'_3) &= \frac{F(T_0, 2^{3}T'_1, 2T'_2, 2T'_3)}{2^{3}} \nonumber\\
&= T_0^2T_1' + T_0(b_0T_2'^2+b_1T_2'T_3' + b_2  T_3'^2) + \left((T'_2)^3 + (T'_2)^2T'_3 + (T'_3)^3\right) \nonumber\\ 
&\quad + 2 ( \text{terms in } T_0, T'_1, T'_2, T'_3 )
\label{Fm}
\end{align}
This equation defines a projective cubic surface 
\begin{equation}
\label{V1}
V_1: F_1(T_0, T'_1, T'_2, T'_3)=0
\end{equation}
such that modulo 2 it defines the cubic surface
\begin{equation}
\label{eq:v1}
\tilde V_1:  \tilde T_0^2 \tilde T_1' + \tilde T_0(\tilde T_2'^2+\tilde T_2'\tilde T_3' +   \tilde T_3'^2) + (\tilde T'_2)^3 + (\tilde T'_2)^2\tilde T'_3 + (\tilde T'_3)^3 = 0.
\end{equation}

\label{adm}
Next, we define a partition of the set $\tilde{S} \subset \tilde{V}_1(\mathbb{F}_2)$ based on the value of the quadratic form $Q(y,z) = y^2+yz+z^2$.
\begin{itemize}
    \item \textbf{Class $\tilde X_0$}: Points where the quadratic form vanishes ($Q=0$):
    \begin{equation}
        \tilde X_0 = \{ \tilde{P} \in \tilde{S} \mid Q(\tilde T'_2, \tilde T'_3) = 0 \} = \{ (1,0,0,0) \},
    \end{equation}
    \item \textbf{Class $\tilde X_1$}: Points where the quadratic form is non-zero ($Q=1$):
    \begin{equation}
        \tilde X_1 = \{ \tilde{P} \in \tilde{S} \mid Q(\tilde T'_2, \tilde T'_3) = 1 \} = \{ (1,0,1,0), (1,0,1,1), (1,0,0,1) \}.
    \end{equation}
\end{itemize}
We define an equivalence relation $\mathcal{A}$ on the original $V(k)$ by lifting these classes.
Two points $P, Q \in V(k)$ are $\mathcal{A}$-equivalent if their reductions (after the transformation $\Phi_1$) fall into the same set $X_i$.
We denote as class $X_i \subset V_1(k)$ the a set of points on $V_1(k)$ lying above $\tilde X_i$.
One verifies that $\tX_0, \tX_1$ are the universal equivalence classes of the reduced surface such that $\tX_0 \circ \tX_0 = \tX_1$ and $\tX_1 \circ \tX_1 = \tX_1$.
Pulling back to the surface $V$ over 2-adic $k$, \cite{dimitrikanevsky} showed that $X_0, X_1$ define a period-2 component in the universal equivalence of $V(k)$.

Next, we  observe that per \Cref{thm:main}, we know $R$-equivalence on $V(k)$ is 3-torsion-free.
Now, we will prove that $R$-equivalence on $V(k)$ is 2-torsion-free in the next subsection.

\subsection{Finishing Proof of Theorem \ref{single}}
\begin{lemma}
\label{thm:locus}
    Let $\tilde W$ be the projective cubic surface over $\mathbb{F}_2$ defined by the homogeneous equation:
    \begin{equation}
    \label{tildeG}
        \tilde G(X, Y, Z, T) = X^2T + X(Y^2 + YZ + Z^2) + Y^3 + Y^2Z + Z^3 = 0.
    \end{equation}
    Let $\tilde{P}_0 = (1:0:1:0)$ and let $\tilde{S}$ be the locus of points $Q \in \tilde W(\mathbb{F}_2)$ such that $\tilde{P}_0$ lies on the tangent plane $\Pi(Q)$. Then the point $\tilde{P} = (1:0:0:0)$ is a non-singular point of $\tilde{S}$.
\end{lemma}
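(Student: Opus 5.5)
The plan is to realize $\tilde S$ as the complete intersection of $\tilde W$ with the polar quadric of $\tilde P_0$, and then verify smoothness at $\tilde P=(1:0:0:0)$ with the Jacobian criterion. For a smooth point $Q$ of $\tilde W$, the tangent plane is $\Pi(Q)=\{\sum_i x_i\,\partial_i\tilde G(Q)=0\}$. So $\tilde P_0=(1:0:1:0)\in\Pi(Q)$ exactly when
\[
\tilde H(Q):=\tilde G_X(Q)+\tilde G_Z(Q)=0 .
\]
Hence $\tilde S=\tilde W\cap\{\tilde H=0\}$, cut out by the cubic $\tilde G$ and the quadric $\tilde H$. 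I read "non-singular point of $\tilde S$" in the scheme-theoretic sense: the $2\times 4$ Jacobian matrix of $(\tilde G,\tilde H)$ has rank $2$ at $\tilde P$, so $\tilde S$ is a smooth curve there.

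First compute the partial derivatives in characteristic $2$, where $2XT$, $2YZ$, $3Z^2$ and similar terms reduce. One gets
\[
\tilde G_X=Y^2+YZ+Z^2,\qquad \tilde G_Y=XZ+Y^2,\qquad \tilde G_Z=XY+Y^2+Z^2,\qquad \tilde G_T=X^2 .
\]
The squares cancel in the sum, so
\[
\tilde H=YZ+XY=Y(X+Z).
\]
Next, check that $\tilde P$ lies on $\tilde S$: directly $\tilde G(1,0,0,0)=0$, and $\tilde H(1,0,0,0)=0$ since $Y=0$. Then evaluate the gradients at $\tilde P$:
\[
\nabla\tilde G(\tilde P)=(0,0,0,1),\qquad \nabla\tilde H(\tilde P)=(Y,\;X+Z,\;Y,\;0)\big|_{\tilde P}=(0,1,0,0).
\]

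These two vectors are linearly independent. It follows that $\tilde W$ is smooth at $\tilde P$, so $\Pi(\tilde P)=\{T=0\}$ is well defined and $\tilde P$ genuinely belongs to the locus. The surfaces $\tilde G=0$ and $\tilde H=0$ also meet transversally at $\tilde P$, so $\tilde S$ is non-singular there, with tangent line $\{T=Y=0\}$.

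The only real pitfall is the characteristic-$2$ bookkeeping. It must be done on the polynomial level before evaluating. In particular, the polar $\sum_i (P_0)_i\,\partial_i \tilde G$ has to be used rather than a formula that divides by $2$ or $3$, since the latter is invalid over $\mathbb{F}_2$. One should also not mistakenly keep the $Y^2$ and $Z^2$ terms in $\tilde H$, since they cancel. Once $\tilde H=Y(X+Z)$ is correctly obtained, the rest is immediate.
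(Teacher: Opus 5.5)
Your proposal is correct and is essentially the paper's argument: both realize $\tilde S$ as $\tilde W$ cut by the polar quadric of $\tilde P_0$ and apply the Jacobian criterion at $\tilde P$. The paper first moves $\tilde P_0$ to $(0:1:0:0)$ so that the polar becomes $f_Y$, and in characteristic $2$ reduces the rank condition to the presence of an $XYZ$ term, i.e.\ to $\tilde P$ not being a cusp of $\Gamma(\tilde P)$ (the tangent cone $Y^2+YZ+Z^2$ splits into two conjugate lines over $\mathbb{F}_4$); you instead compute $\tilde H = Y(X+Z)$ and both gradients directly in the given coordinates, which is more explicit than the paper's ``one can immediately check''.
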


\begin{proof}
We follow the method in Lemma 16 of \cite{swinnertondyer}.
Take coordinates such that $\tilde P = (1,0,0,0)$, $\tilde P_0 = (0,1,0,0)$ and $\Pi(\tilde P)$ is $T = 0$. Then the equation of $\tilde W$ is
\begin{equation}
\label{f}
    f = X^2T + \text{terms at most linear in X} = 0 
\end{equation}
and $f$ has no terms in $Y^3$. The curve $\tilde S$ has equation $F_Y = 0$ where the subscript denotes differentiation.  $\tilde S$ has singular point at $\tilde P$ if and only if
\begin{equation}
    \label{fxy}
    f_{XY} = f_{YY} = f_{YZ} = 0 \text{ at } \tilde P
\end{equation}
Since char $\mathbb{F}_2$ = 2 this means that $\tilde P$ is singular on $\tilde S$ if $f$ has no 
term in $XYZ$, i.e. $\tilde P$ is a cusp on $\Gamma(\tilde P)$. But one can immediately check that
$\tilde P = (1, 0, 0, 0)$ is not a cusp.
\end{proof}
\begin{lemma}
\label{thm:one-pt-class-free}
Let $W$ be the cubic surface over $\mathbb{Q}_2$ defined by
\begin{equation}
    \label{G}
        \tilde G(X, Y, Z, T) = X^2T + X(Y^2 + YZ + Z^2) + Y^3 + Y^2Z + Z^3 + 2(...)= 0.
\end{equation}
Then $\tilde{P} = (1, 0, 0, 0) \in \tilde W(\mathbb{F}_2)$ is class free.
\end{lemma}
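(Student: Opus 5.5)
Throughout, ``class free'' means with respect to universal equivalence on $W(\mathbb{Q}_2)$: every lift of $\tilde P$ lies in one class. The plan follows Swinnerton-Dyer's Lemma 15--16 strategy. Lemma~\ref{thm:locus} supplies the input: $\tilde P=(1:0:0:0)$ is a non-singular point of the curve $\tilde S$ of points whose tangent plane passes through $\tilde P_0=(1:0:1:0)$.

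\textbf{Step 1: lift the tangency condition.} Fix one lift $P_0\in W(\mathbb{Q}_2)$ of $\tilde P_0$. Let $S\subset W$ be the curve cut out by the polar condition ``$P_0\in\Pi(Q)$''. Its reduction is $\tilde S$, which is smooth at $\tilde P$. Hensel's lemma on the two equations $G=0$ and $\sum_j (P_0)_j\,\partial_j G=0$ then shows that the lifts of $\tilde P$ lying on $S$ form a $2$-adic analytic arc through the residue disc. For each such $Q$, the tangent section $W\cap\Pi(Q)$ contains $P_0$, so $(Q,Q,Q\circ Q)$ and $(P_0,\,Q\circ Q,\,\ast)$ are collinear configurations. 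Along this arc, the classes of $Q$ vary only through the tangent-section relation (Fact~\ref{item}) anchored at the fixed $P_0$.

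\textbf{Step 2: connect an arbitrary lift to the arc.} Take any lift $P$ of $\tilde P$. I would choose the coordinates of \eqref{f}, with $\tilde P=(1,0,0,0)$ and $\Pi(\tilde P)=\{T=0\}$. A direct computation should show that the set of points on lines through $P_0$ tangent to $W$ at some lift of $\tilde P$ fills the whole residue disc above $\tilde P$. Equivalently, every lift $P$ is the third point of a collinear triple $(P_0, Q', R)$ in which $Q'$ runs over $S$-points and $R$ reduces to a fixed point. Admissibility then gives $[P]=[P_0]\circ[R]$, and this value is independent of the choice of $P$, provided the $R$'s lie in a single class. This is the same ``secant lifting'' bootstrap used in the proof of Theorem~\ref{thm:main}.

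\textbf{Step 3: close the loop with the two-tangent structure.} The reduction $\Gamma(\tilde P)=\tilde W\cap\{T=0\}$ is $X(Y^2+YZ+Z^2)+Y^3+Y^2Z+Z^3$. This is a nodal cubic at $\tilde P$ with the two tangent lines $Y^2+YZ+Z^2=0$, which are conjugate over $\mathbb{F}_4$ and therefore irrational. Since the point is not a cusp, by the computation in Lemma~\ref{thm:locus}, the tangent-section rule applies to all lifts simultaneously: for each lift $P$, the points of $W\cap\Pi(P)$ other than $P$ form one class. These sections vary continuously with $P$ and all reduce into $\Gamma(\tilde P)$, whose non-singular $\mathbb{F}_2$-points are $\tilde V$-points distinct from $\tilde P$. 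Comparing the sections for two lifts through a common intermediate point then identifies $[P]\circ[P]$ across lifts. Combined with Step 2, this forces $[P]$ itself to be constant.

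The main obstacle is Step 2. I need to show that the $2$-adic map from the arc $S$ (times the line parameter) to the residue disc of $\tilde P$ is surjective. In characteristic 2 the relevant Jacobian can degenerate, and this is exactly why the non-cusp condition was isolated. I would attempt it by an explicit Newton-polygon/Hensel computation in the normalized coordinates, tracking the $XYZ$ coefficient as the unit that makes the derivative invertible.
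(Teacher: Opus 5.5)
\textbf{There is a genuine gap.} Your Step~2 fails as formulated, so this is not merely a missing computation.

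The line through $\tilde P_0=(1:0:1:0)$ and $\tilde P$ is $Y=T=0$. Since $\tilde G(1,0,s,0)=s^2(1+s)$, it meets $\tilde W$ in $2\tilde P+\tilde P_0$. So for any lift $P_0$ of $\tilde P_0$ and any lift $P$ of $\tilde P$, the third point of the line $P_0P$ again lies above $\tilde P$. The points you call $R$ are therefore themselves lifts of $\tilde P$, and the proviso ``provided the $R$'s lie in a single class'' is just the lemma itself.

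The geometric claim in Step~2 is also false. A line through $P_0$ tangent to $W$ at a point $Q'$ of $S$ meets $W$ only in $P_0+2Q'$. These lines therefore pick out nothing on $W$ beyond the arc $S$ and $P_0$, and no Hensel or Newton-polygon computation can make that map onto the residue disc.

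Step~3 cannot compensate. Knowing that $[P]\circ[P]$ is independent of the lift says nothing about $[P]$ on the exponent-$2$ part of the loop, where every class satisfies $X\circ X=X_0$ (Property~2). That is exactly the part at stake in this section. You also leave unproved that $\Gamma(P)$ and $\Gamma(P')$ share a rational point other than $P,P'$. The paper gets this from the line $\Pi(P)\cap\Pi(P')$: its reduction is a rational line through $\tilde P$ in $T=0$, which meets $\tilde W$ again simply at a rational point, and that point lifts by Hensel.

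\textbf{The missing idea} is to use the secant through two lifts, rather than lines through $P_0$.
\begin{enumerate}
\item Take $P\neq P'$ both above $\tilde P$. The reduced direction of $PP'$ lies in $\Pi(\tilde P)=\{T=0\}$, so $\widetilde{PP'}$ is one of the three rational lines through $\tilde P$ in that plane.
\item Each of these lines meets $\tilde W$ in $2\tilde P$ plus one further rational point of $\Gamma(\tilde P)$. Hence the third point $P_2$ of $PP'$ lies above a point of $\Gamma(\tilde P)(\mathbb{F}_2)\setminus\{\tilde P\}$.
\item Run your Step~1 with $P_2$ in place of $P_0$, using Lemma~\ref{thm:locus} plus Hensel. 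The non-cusp criterion there does not depend on which rational point of $\Gamma(\tilde P)$ plays the role of $\tilde P_0$. This gives a lift $P''$ of $\tilde P$ with $P_2\in\Gamma(P'')$, so $[P_2]=C$, the common class of the tangent sections.
\item Then $[P]\circ[P']=[P_2]=C=[P]\circ[P]$, and the cancellation law $X\circ(X\circ Y)=Y$ gives $[P]=[P']$.
\end{enumerate}
This is how the paper's proof concludes.
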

\begin{proof}
We follow the method in Lemma 15 and Lemma 17 of \cite{swinnertondyer}.

\vspace{0.5em}
\noindent\textbf{Step 1:}

Let $P = (1, 0, 0, 0)$ and $P'= (1, y, z, t)$ be distinct points of $W$  above $\tilde{P}$.
The intersection of the two tangent planes $\Pi(P) \cap \Pi(P')$ within $T=0$ yields a line $L$ that modulo 2 belongs to the tangent plane at  $\tilde{P}$. It can be shown as in the following. Because $P'$ is close to $P$ in the $2$-adic topology, substituting the partial derivatives of $G$ to the first order in $\mathfrak{p}$-adic valuations shows this line is governed by:
\begin{equation}
Y G_Y(P') + Z G_Z(P') = 0
\end{equation}
Evaluating the formal partial derivatives, we find $G_Y(P') \approx z$ and ${F_1}_Z(P') \approx y$ modulo higher-order terms. By symmetry, we may assume $v(y) \ge v(z)$. We have $v(t) > v(z)$. The intersection line $L$ thus takes the form $Y + (y/z)Z + \dots = 0$, i.e.  the line $L$ that modulo 2 belongs to the tangent plane at  $\tilde{P}$. Hence $\tilde L \mod 2$ meets $\tilde W$ again in a rational point $\tilde P_1 \neq \tilde P_2$, and so $L$ meets $W$ in a rational point $P_1$ above $\tilde P_1$.

\vspace{0.5em}
\noindent\textbf{Step 2:}

Let $C$ be the class of $P_1$. Since all rational points of $\Gamma(P)$ except perhaps $P$ belong to the same class, all rational points of $\Gamma(P)$ and $\Gamma(P')$ except perhaps $P$ and $P'$ belong to $C$. Let $P_2 $ be any rational point on $W$ such that $\tilde P_2 = (1,0,1,0) \in \Gamma (\tilde P)$. By \Cref{thm:locus} there exists a rational $P'$  on $W$ above $\tilde P$ such that $P_2$ lies on $\Gamma(P')$; hence $P_2 \in C$.\\
Now let $L_1$ denote the line $PP'$. A calculation like in Step 1 shows that $\tilde L_1$ is a rational line through $\tilde P$ in $\Pi(\tilde P)$. Therefore, as for $L$ above, $L_1$ meets $W$ again in a rational point $P_2$ of the kind described above. Now $P, P, P_1$ and $P, P', P_2$ are collinear. But $P_1 \sim P_2$. Therefore $P \sim P'$.
\end{proof}

\subsubsection{$R$-equivalence for Cubics Reducing to One Point}
To prove the $R$-equivalence part of \Cref{single} we need the following statement:
\begin{lemma}
\label{thm:tanglimit}
Let $V$ be a cubic surface over a local $p$-adic field $k$. Let $P \in V(k)$ be a point such that its reduction $\tilde{P} = P \pmod{\mathfrak{p}}$ is a smooth point on $\tilde{V} = V \pmod p$, and $\tilde{V}$ contains no lines passing through $\tilde{P}$.

Let $K$ be a separable quadratic extension of $k$. Let $\{Q_i\}_{i \ge 1}$ be a sequence of points in $V(K) \setminus V(k)$ converging to $P$ in the $p$-adic topology. Let $Q'_i$ be the Galois conjugate of $Q_i$ over $k$. Let $r_i$ be the third point of intersection of the line $L_i$ passing through $Q_i$ and $Q'_i$ with $V$.

Then the sequence of lines $L_i$ converges to a tangent line to $V$ at $P$ defined over $k$, and the sequence of points $r_i$ converges to the intersection of this tangent line with $V$ (specifically to the point $R$ such that the intersection cycle is $2P+R$).
\end{lemma}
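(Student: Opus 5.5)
The plan is to work with explicit $k$-rational coordinates on the lines $L_i$ and pass to the limit using compactness of $k$-rational projective space together with continuity of the roots of a cubic in its coefficients.

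\textbf{Step 1: the lines $L_i$ are $k$-rational.} Write $K=k(\sqrt{d})$. Choose affine representatives so that $Q_i = a_i + \sqrt{d}\, b_i$ with $a_i, b_i \in \mathcal{O}_k^4$. Then $Q_i' = a_i - \sqrt{d}\, b_i$. Since $Q_i \notin V(k)$, the vectors $a_i, b_i$ are independent, so $L_i = \langle Q_i, Q_i'\rangle = \langle a_i, b_i\rangle$ is a line defined over $k$.

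\textbf{Step 2: a convergent parametrization.} Normalize representatives so that $a_i \to P$ in a fixed primitive representative. Then $b_i \to 0$, but $b_i \neq 0$. Rescale $b_i$ to a primitive vector $v_i = b_i/\pi^{v(b_i)}$. Since $\mathbb{P}^3(k)$ is compact, pass to a subsequence with $v_i \to v$. Here $v$ is not proportional to $P$ after subtracting off its $P$-component, which is harmless. This sequential-compactness step means "converges" in the statement should be read along subsequences, i.e.\ every limit point of $(L_i)$ has the stated property. If needed, I would note that the pencil of tangent lines at $P$ is a whole pencil, so only accumulation points can be controlled. The limit line is $L = \langle P, v\rangle$, which is defined over $k$.

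\textbf{Step 3: restrict the cubic to the lines.} Parametrize $L_i$ by $a_i + s v_i$ and consider $\phi_i(s) = F(a_i + s v_i)$. This is a cubic in $s$ with coefficients in $k$ that converge to those of $\phi(s) = F(P + s v)$.

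The roots of $\phi_i$ include $s = \pm \sqrt{d}\,\pi^{v(b_i)}$, and these tend to $0$. Hence, by continuity of roots (Krasner / Newton polygon), the limit cubic $\phi$ has $s=0$ as a root of multiplicity $\geq 2$. So $L$ is tangent to $V$ at $P$, provided $\phi \not\equiv 0$.

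The third root $s_i$ of $\phi_i$ is $k$-rational, being the remaining root after a conjugate pair. It converges to the third root $s_\infty$ of $\phi$, which gives $r_i = a_i + s_i v_i \to P + s_\infty v = R$ with $L\cdot V = 2P + R$. For this one needs the leading coefficient $F(v)$ to stay away from $0$; otherwise the third root is at infinity, handled projectively by the same continuity argument in homogeneous coordinates $(\lambda:\mu)$.

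\textbf{Step 4 (main obstacle): ruling out $\phi \equiv 0$ and $\phi_i \equiv 0$.} This is where the hypothesis on $\tilde V$ enters, and I expect it to be the delicate part.

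If $L \subset V$, then reducing the primitive $\mathcal{O}_k$-basis $P, v$ gives $\tilde L \subset \tilde V$. Here $\tilde L$ is a genuine line, which I would ensure by choosing $v$ with $\tilde v$ independent of $\tilde P$: subtract a multiple of $P$ and renormalize before taking limits. Moreover $\tilde L$ passes through $\tilde P$, contradicting the assumption that $\tilde V$ contains no lines through $\tilde P$.

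The same reduction argument shows that for $i$ large, $L_i \not\subset V$, so $r_i$ is well defined. If some $\tilde L_i$ lay in $\tilde V$, it would pass through $\tilde Q_i = \tilde P$.

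The care needed is precisely in making the reduced basis independent uniformly in $i$, i.e.\ choosing $v_i$ orthogonal-mod-$\mathfrak p$ to $P$. After that, smoothness of $\tilde P$ guarantees the tangent direction computed mod $\mathfrak p$ is meaningful.
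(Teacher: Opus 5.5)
Your proposal is correct and is essentially the paper's argument. Writing $Q_i=a_i+\sqrt{d}\,b_i$ makes $L_i$ the $k$-line through $a_i$ with direction $b_i$; you pass to a limit direction, use the no-lines hypothesis on $\tilde V$ to exclude $L_\infty\subset V$, and get $r_i\to R$ by continuity of the roots of the restricted cubic. Your double-root-at-$s=0$ argument replaces the paper's gradient/Taylor step, and your ``zero $P$-component'' normalization is exactly the paper's affine chart. Concretely, put $P=(1:0:0:0)$ and take the representative of $Q_i$ with first coordinate $1$. Then $b_i$ has zero first coordinate, $\tilde v$ is automatically independent of $\tilde P$, and the conjugate roots really are $\pm\sqrt{d}\,\pi^{v(b_i)}$. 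If you instead modify $v_i$, subtract multiples of $a_i$, not of $P$, or $L_i$ moves.

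Your subsequence caveat is right and applies to the paper as well, whose proof only shows that every limit direction lies in $T_PV$ before asserting convergence: in general the $Q_i$ can be chosen to approach $P$ alternately along two different tangent directions. So the conclusion should be read as saying that every accumulation point $L_\infty$ of $(L_i)$ is a $k$-rational tangent line at $P$, and that $r_i\to R$ along the corresponding subsequence, where $L_\infty\cdot V=2P+R$.
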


\begin{proof}
We proceed in steps:

\vspace{0.5em}
\noindent\textbf{Step 1: Local Coordinates and Smoothness}

    Since $\tilde{P}$ is a smooth point of $\tilde{V}$, $P$ is a smooth point of $V$ (by Hensel's Lemma/lifting of the non-vanishing gradient). We can choose affine coordinates $(x,y,z)$ defined over the ring of integers of $k$ such that $P$ is the origin $(0,0,0)$ and the tangent plane $T_P V$ is given by $z=0$.
    The equation of the cubic surface $V$ can be written as:
    \[ F(x,y,z) = F_1(x,y,z) + F_2(x,y,z) + F_3(x,y,z) = 0 \]
    where $F_1, F_2, F_3$ are homogeneous polynomials of degrees 1, 2, and 3 respectively. Since $z=0$ is the tangent plane, $F_1(x,y,z) = z$ (up to a unit scaling factor). Thus:
    \[ F(x,y,z) = z + F_2(x,y,z) + F_3(x,y,z) = 0 \]

\vspace{0.5em}
\noindent\textbf{Step 2: Parametrization of Points}

    Let $K = k(\sqrt{D})$ for some $D \in k \setminus k^2$. Since $Q_i \in V(K) \setminus V(k)$ and $Q_i \to P$, we can write $Q_i$ in terms of its coordinates in the basis $\{1, \sqrt{D}\}$:
    \[ Q_i = A_i + \sqrt{D} B_i \]
    where $A_i, B_i \in k \times k \times k$. The Galois conjugate is $Q'_i = A_i - \sqrt{D} B_i$.
    Since $Q_i \to P=(0,0,0)$, we have $A_i \to 0$ and $B_i \to 0$ in the $p$-adic topology.
    Since $Q_i \notin V(k)$, $Q_i \neq Q'_i$, which implies $B_i \neq 0$.

\vspace{0.5em}
\noindent\textbf{Step 3: Convergence to a Tangent Line}

    The line $L_i$ passing through $Q_i$ and $Q'_i$ is parameterized by $R(t) = A_i + t B_i$. The direction of this line is given by the vector $B_i$.
    Since $Q_i$ lies on $V$, $F(Q_i) = 0$. Using the Taylor expansion of $F$ around $A_i$:
    \[ F(A_i + \sqrt{D} B_i) = F(A_i) + \nabla F(A_i) \cdot (\sqrt{D} B_i) + \mathcal{O}(|B_i|^2) = 0 \]
    Considering the terms associated with $\sqrt{D}$ (the irrational part), and dividing by $\sqrt{D}$ (since $B_i \neq 0$):
    \[ \nabla F(A_i) \cdot B_i + \text{Higher Order Terms} = 0 \]
    As $i \to \infty$, $A_i \to P$. By the continuity of the gradient, $\nabla F(A_i) \to \nabla F(P)$. Since $B_i \to 0$, the higher order terms vanish faster than linear terms. Thus, the direction vectors $v_i = B_i / \|B_i\|$ satisfy:
    \[ \lim_{i \to \infty} \nabla F(P) \cdot v_i = 0 \]
    This implies that any limit direction of the secant lines lies in the kernel of the gradient form at $P$, which is exactly the tangent plane $T_P V$.
    Thus, the line $L_i$ converges to a line $L_\infty$ passing through $P$ and contained in the tangent plane $T_P V$. Since $L_i$ passes through $k$-rational points $A_i$ with $k$-rational direction $B_i$, the limit line is defined over $k$.

\vspace{0.5em}
\noindent\textbf{Step 4: The Third Intersection Point}

    The intersection of the line $L_i$ with $V$ is determined by the roots of the cubic polynomial $g_i(t) = F(A_i + t B_i)$. The roots are $t_1 = \sqrt{D}$ (corresponding to $Q_i$) and $t_2 = -\sqrt{D}$ (corresponding to $Q'_i$). Let $t_3$ be the parameter for the third point $r_i$.
    By Vieta's formulas, the sum of the roots satisfies $t_1 + t_2 + t_3 = - \frac{\text{coeff of } t^2}{\text{coeff of } t^3}$.
    \[ \sqrt{D} - \sqrt{D} + t_3 = t_3 = - \frac{F_2(i)}{F_3(i)} \]
    where $C_3(i)$ is the coefficient of the cubic term of $F$ restricted to $L_i$. In the limit, the line becomes a tangent line $L_\infty$.
    
    The condition that ``there is no line on $\tilde{V}$ through $\tilde{P}$'' guarantees that the restriction of the cubic form to the tangent plane is not identically zero, and specifically, the reduced tangent line is not contained in $\tilde{V}$. By Hensel's Lemma structures, this implies $L_\infty$ is not contained in $V$. Thus, the intersection cycle $L_\infty \cdot V$ is a finite 0-cycle of degree 3.
    Since $L_\infty$ is tangent at $P$, the intersection cycle is of the form $2P + R$, where $R \in V(k)$.
    By the continuity of the roots of polynomials with respect to their coefficients, the point $r_i$ converges to $R$.
    
    Since $L_\infty \subset T_P V$, the point $R$ lies on the intersection curve $C_P = V \cap T_P V$.
    Therefore, $r_i$ converges to a point on the tangent plane defined over $k$.
\end{proof}

With the lemmas established, we proceed to prove the $R$-equivalence part of \ref{single}.
Let $\mathcal{U}$ denote the universal equivalence relation on $V(k)$. The set of equivalence classes $V(k)/\mathcal{U}$ consists of two classes, denoted $X_0, X_1 \subset V(\mathbb{Q}_2)$, which satisfy the composition laws $X_0 \circ X_0 = X_1$ and $X_1 \circ X_1 = X_1$ under the collinear binary operation on the cubic surface.

\begin{proof}[Proof of $R$-equivalence in the one-point case]
Consider the unramified quadratic extension $K = \mathbb{Q}_2(\theta)$, where $\theta^2 + \theta + 1 = 0$. The points $Z_1 = (1, \theta, 0, 0)$ and $Z_2 = (1, \theta^2, 0, 0)$ in $V(K)$ are Galois conjugate, are in general position, and are not Eckardt points. Consequently, by Theorem \ref{thm:swd-2}, all points in $V(K)$ are universally equivalent.

Let $P_1$ be a point in $X_1$ and let $P_0$ be any point in $X_0$. We will prove that $P_0 \sim P_1 \pmod R$.

Consider the reduction modulo $2$. The set $X_0 \pmod 2$ consists of a single point in $\tilde{V}(\mathbb{F}_2)$, which we denote by $\tilde{P}_0$. Let $\tilde{Z}_1 = Z_1 \pmod 2$ and $\tilde{Z}_2 = Z_2 \pmod 2$. These are conjugate points in general position on $\tilde{V}(\tilde{K})$, and the triple $\tilde{P}_0, \tilde{Z}_1, \tilde{Z}_2$ is collinear on $\tilde{V}$.

For any point $P_0 \in X_0 \subset V(\mathbb{Q}_2)$ lying above $\tilde{P}_0$, we can choose a line $L$ defined over $\mathbb{Q}_2$ passing through $P_0$ such that its reduction $\tilde{L} = L \pmod 2$ passes through $\tilde{Z}_1$ and $\tilde{Z}_2$. Let the intersection of $L$ with $V(K)$ be the cycle $P_0 + H_1 + H_2$, where $H_1, H_2 \in V(K)$. The points $H_1$ and $H_2$ are conjugate over $\mathbb{Q}_2$ and are in general position.

Since $V(K)$ consists of a single universal equivalence class, there exists a chain of rational curves connecting $H_1$ to $P_1$ (viewed as a point in $V(K)$). We follow the method of Manin described in \cite[Section 15.1.3]{manin3}. There exists a sequence of points $x_0, x_1, \dots, x_{r+1}$ in $V(K)$ with $x_0 = H_1$ and $x_{r+1} = P_1$, such that these points are connected by $K$-morphisms
\[
f_i: \mathbb{P}_K^1 \to V \otimes K, \quad \text{with } f_i(0) = x_i \text{ and } f_i(\infty) = x_{i+1}.
\]
According to \cite[\S 15.1.3.iii]{manin3}, the intermediate points $x_1, \dots, x_{r-1}$ can be chosen such that the conjugate pairs $x_i, \bar{x}_i$ in $V(K)$ are in general position for all $i = 1, \dots, r-1$.

We let the non-trivial automorphism of the extension $K/\mathbb{Q}_2$ act on $\mathbb{P}_K^1$ and $V \otimes K$ via the second factor. This action defines the conjugate morphisms $\bar{f}_i$. We define the trace morphisms
\[
g_i = f_i \circ \bar{f}_i : \mathbb{P}_{\mathbb{Q}_2}^1 \to V, \quad \text{for } i = 0, \dots, r.
\]
By definition, for all $t \in \mathbb{P}^1(\mathbb{Q}_2)$, the point $g_i(t)$ is the third intersection point of the line connecting $f_i(t)$ and $\bar{f}_i(t)$ with the surface $V$. In particular,
\[
g_i(0) = x_i \circ \bar{x}_i, \quad g_i(\infty) = x_{i+1} \circ \bar{x}_{i+1}.
\]
Since $x_{r+1} = P_1 \in V(\mathbb{Q}_2)$, we analyze the final morphism $g_r$. It is a rational map from $\mathbb{P}^1(\mathbb{Q}_2)$ to $V(\mathbb{Q}_2)$, defined as a morphism on a Zariski open set $U \subset \mathbb{P}^1(\mathbb{Q}_2)$.

 In the 2-adic topology, we can find points in $U$ sufficiently close to $\infty \in \mathbb{P}_{\mathbb{Q}_2}^1$ such that their images $P'$ under $g_r$ are close to $Q' \in X_1$. Consequently (from \Cref{thm:tanglimit}), for some neighborhood $W'$ of $\infty$ in $\PP_{\Q_2}^1$ under the 2-adic topology, the images of all points $P'$ in $W$ under $g_r$ map into the class $X_1$. This implies that the rational curve defined by $g_r$ connects the component containing $x_r \circ \bar{x}_r$ to a point in $X_1$.

Tracing back through the chain (starting from $g_0(0) = H_1 \circ \bar{H}_1 = P_0$), we conclude that $P_0$ is $R$-equivalent to a point in $X_1$. Since $P_0$ was arbitrary, this proves the theorem.
\end{proof}

\begin{ai}
\section{Disclosure of Artificial Intelligence (AI) Use}
\label{sec:ai-assist}

Our explorations began in March 2025, and so many of our interactions predate suggested norms on cataloging and presenting AI use (e.g., \cite{schmitt2025extremal, feng2026aletheia}). A further complication that our is \textit{process} is many-to-many with our \textit{presentation}; that is, our publications derive from many overlapping interactions, with release order modulated primarily by human pedagogy, taste, and confidence in results. Hence, we defer interaction specifics (e.g., human-AI interaction cards \cite{feng2026aletheia}) and overall lessons learned to a \textit{unified} companion report \cite{aireport}, where we frame this paper as one of many within our AI-assisted effort to fill out the theory of admissible equivalences on cubic surfaces.

Exposition and definitions are mostly human-written. Proofs of new lemmas are largely written by large language models (LLMs), Gemini 3 Pro and Deep Think, but under human direction and editing.
Specifically:
\begin{itemize}
\item In understanding why \cite{manin3} did not mention the claimed proof of the unpublished preprint \cite{dkbrauer} despite referencing its other results, we identified the assumptions of sufficient points in general position and the bijection of equivalence classes upon lifting as critical gaps. We conceived that both could be closed with standard methods, but the exact methods and their rigorous use (\Cref{thm:genposall,thm:biject}) were done by Gemini 3 Deep Think.
\item \Cref{thm:locus} and \Cref{thm:one-pt-class-free} are based on \cite[Lemma 15-17]{swinnertondyer}. They were respectively verified by Gemini 3 Pro and drafted by Gemini 3 Deep Think.
\item \Cref{thm:tanglimit} was written by Gemini 3 Deep Think with manual guidance. The authors would/could not have made this proof rigorous to the level attained by AI.
\end{itemize}

Below is the timeline of interactions resulting in this work. Note that the extended period does not necessarily reflect the intrinsic effort required or capability of named models, as (i) this work is a secondary project by AI researchers at Google DeepMind who happened to have mathematical training a decade ago, and (ii) this is only the first paper derived from the time period:

\begin{itemize}
  \item \textbf{March 2025:} Our investigation began with our interest in finding the $R$-equivalence of the \textit{3-adic} version of Manin's surface, i.e., over $\mathbb{Q}_3(\theta)$ (bad reduction). The non-associative universal equivalence on this surface was only recently established by \cite{dknonassoc, dknonassoc2}. The authors felt that AI tools had matured sufficiently to be useful for this problem. (In a follow-up work we will show that $R$-equivalence is also equal to Brauer equivalence in this case.)
  \item \textbf{April 2025:} For LLM use, we acquired and digitized copies of works such as \cite{dimitrikanevsky} and \cite[\S 8]{swinnertondyer}, who gave a construction (a curve given by intersecting a cubic and quadric) demonstrating trivial $R$-equivalence over $\mathbb{F}_2(\theta)$. Under incorrect intuition that this construction lifted to $\mathbb{Q}_2(\theta)$, we wanted to find a similar construction over $\mathbb{F}_3(\theta)$ that lifted to $\mathbb{Q}_3(\theta)$. We used AlphaEvolve \cite{alphaevolve} to automate the search for valid intersections with the right genus, which worked but quickly plateaued, leading us to reconsider our assumption. Gemini 2.5 Pro \cite{gemini} devised an argument for why the characteristic 2 construction could not lift as-is. (In Feb.\ 2026 we would finally get a copy of \cite{weak-approx}, who admitted the same!)
  \item \textbf{May 2025:} This failure is consistent with the statement of Manin's, that we find at this point, in the final edition of \textit{Cubic Forms} \cite{manin3}---suggesting the 2-adic problem was still open. Rapid ideation and discussions with Gemini 2.5 Pro commenced, culminating in a natural language proof written by Gemini.  We were excited to share what we believed to be the first research mathematics result co-written by generative AI.
  \item \textbf{August 2025:} Gemini 2.5 Pro with web search rediscovered the first author's preprint \cite{dkbrauer} on a Max Planck Institute web server, which gave a similar approach (in intuitive detail) to Manin's question. Due to its age (40+ years ago), even the first author no longer had a copy of their work and forgot their own ``proof!'' We paused our announcement.
  \item \textbf{Sept.\ to Nov.\ 2025:} We grew to understand why Manin (despite citing \cite{dkbrauer} in \cite{manin3} for other results) and later \cite{weak-approx} might not have considered the problem solved; see footnote to the proof of \Cref{thm:main} in \Cref{proof:thm-a}. Gemini Deep Think and Gemini 3 Pro (preview) are released; due to their improved capabilities, we performed deeper scrutiny and requested levels of detail unusual for publications in algebraic geometry. Critiquing these revived our mathematical training, improved our knowledge, and gave new prompting and proof strategies.  Reports of LLM progress on research math proliferate; see \cite[\S 7]{feng2026aletheia} for a survey.
  \item \textbf{Dec.\ 2025 to Feb.\ 2026:} Using Gemini 3 Pro and Deep Think, we built atop our initial insights from Gemini 2.5 Pro and used them to write highly detailed proofs closing the gaps in \cite{dkbrauer}'s argument and thus resolving Manin's 1972 question. These LLMs also realized a program to extend our work to the non-trivial universal equivalence case in \cite[\S 3.4]{dimitrikanevsky}; their large casework and mixed success led the authors to simpler geometric reasoning whose lemmas AI  made rigorous. Together, we viewed these as a coherent set of results for release.
  \item \textbf{Mar.\ 2026:} Final writing refinements aided by Gemini 3.1 Pro and Deep Think, improving presentation correctness (e.g., for \Cref{thm:aubry-perret}, citing \cite{aubry1996weil} \cite{leep1994number} instead of a later Aubry-Perret work that restates/refines the bounds) and streamlining exposition (e.g., using \cite{kollar2003rational} to avoid ``general type'' restrictions, or skipping quasigroups along the way to defining CMLs).
\end{itemize}

We share this timeline to give a human and historical angle to our work. Due to the ambiguity about how ``essential'' AI was in our situation, and as the final proof approaches were human-driven (despite some initial versions by AI), we follow the guidance of \cite[5.1.1]{feng2026aletheia} and class this publication-level human-AI work as ``primarily human'' (H2) in their taxonomy. That said, we believe our \textit{process} demonstrates a sustained non-trivial human-AI collaboration where AI's role continues to increase, as we find in forthcoming C2-level works in this series.
\end{ai}

\bibliographystyle{amsalpha}
\bibliography{references}

\end{document}